\newtheorem{theorem}{Theorem}
\numberwithin{theorem}{section} 
\newtheorem*{theorem*}{Theorem}
\newtheorem{corollary}[theorem]{Corollary}
\newtheorem{lemma}[theorem]{Lemma}
\theoremstyle{definition}
\newtheorem{definition}[theorem]{Definition}
\newtheorem{remark}[theorem]{Remark}
\newtheorem{example}[theorem]{Example}
\DeclareMathOperator{\Dom}{Dom}
\DeclareMathOperator{\Aut}{Aut}
\DeclareMathOperator{\Stab}{Stab}
\DeclareMathOperator{\Sym}{Sym}
\DeclareMathOperator{\Alt}{Alt}
\newcommand{\ecol}{\ensuremath{\zeta}}
\newcommand{\vcol}{\ensuremath{\chi}}
\newcommand{\cthree}{\mathop{\overrightarrow{C_3}}}
\newcommand{\cfour}{\mathop{\overrightarrow{C_4}}}
\newcommand{\dunion}{\mathbin{\dot{\cup}}}
\tikzstyle{svertex}=[circle,inner sep=0.cm, minimum size=3mm, fill=black, draw=black]
\tikzstyle{xsvertex}=[circle,inner sep=0.cm, minimum size=2mm, fill=black, draw=black]
\tikzstyle{harc}=[thick, ->]
\tikzstyle{sharc} = [left color=black!5, right color=black,
\tikzset{shadearc/.style={
		postaction={
			decorate,
			decoration={
				markings,
				mark=at position \pgfdecoratedpathlength-0.05pt with {\arrow[Black,line width=#1] {>}; },
				mark=between positions 0 and \pgfdecoratedpathlength-2pt step 0.5pt with {
					\pgfmathsetmacro\myval{multiply(divide(
						\pgfkeysvalueof{/pgf/decoration/mark info/distance from start}, \pgfdecoratedpathlength),100)};
					\pgfsetfillcolor{Black!\myval!White};
					\pgfpathcircle{\pgfpointorigin}{#1};
					\pgfusepath{fill};}
}}}}
\tikzset{shadearc/.style={
		postaction={
			decorate,
			decoration={
				markings,
				mark=at position \pgfdecoratedpathlength-0.05pt with {\arrow[Black,line width=#1] {>}; },
				mark=between positions 0 and \pgfdecoratedpathlength-2pt step 0.5pt with {
					\pgfmathsetmacro\myval{multiply(divide(
						\pgfkeysvalueof{/pgf/decoration/mark info/distance from start}, \pgfdecoratedpathlength),100)};
					\pgfsetfillcolor{Black!\myval!Gray};
					\pgfpathcircle{\pgfpointorigin}{#1};
					\pgfusepath{fill};}
}}}}
\newif\iftikz@shading@path
\title{Finite Vertex-colored Ultrahomogeneous\\ Oriented Graphs}
\author{Irene Heinrich, Eda Kaja, and Pascal Schweitzer}
\tikzset{>=stealth}
\begin{document}
	\maketitle
	
\begin{abstract}
	A relational structure $\mathcal{R}$ is ultrahomogeneous if every isomorphism of finite induced substructures of $\mathcal{R}$ extends to an automorphism of $\mathcal{R}$. We classify the ultrahomogeneous finite binary relational structures with one asymmetric binary relation and arbitrarily many unary relations. In other words, we classify the finite vertex-colored oriented ultrahomogeneous graphs. The classification comprises several general methods with which directed graphs can be combined or extended to create new ultrahomogeneous graphs. Together with explicitly given exceptions, we obtain exactly all  vertex-colored oriented ultrahomogeneous graphs this way. Our main technique is a technical tool that characterizes precisely under which conditions two binary relational structures with disjoint unary relations can be combined to form a larger ultrahomogeneous structure.
\end{abstract}
	
\section{Introduction}

A relational structure~$\mathcal{R}$ is \emph{ultrahomogeneous} if every isomorphism between finite induced substructures of $\mathcal{R}$ extends to an automorphism of $\mathcal{R}$.
Ultrahomogeneity is a natural generalization of transitivity: a graph is vertex-transitive (edge-transitive) if for each two vertices (edges) of the graph there exists an automorphism of the graph which maps the one vertex (edge) to the other.
In some sense, ultrahomogeneous structures form the most symmetric structures possible. Indeed, we can think of ultrahomogeneity as stating the following. If two parts of the structure locally look the same then there is a global symmetry of the object demonstrating that the parts are indeed structurally the same, even if the entire structure is taken into account.
The emergence and separate handling of highly symmetric structures is unavoidable in various algorithms for symmetry detection and exploitation. For example, Babai's celebrated quasipolynomial time algorithm~\cite{Babai16} for the graph isomorphism problem relies on the techniques of local certificates, which treats the case of complete symmetry separately. 

Being highly symmetric objects, ultrahomogeneous structures have been extensively studied over the years. In fact, there are numerous books, surveys, and major results on the matter (see for example~\cite{MR3418640,MR1434988,Fraisse1953,MR583847,Lachlan1997,DBLP:journals/dm/Macpherson11,Mekler1993,MR363974}). 
Beyond the intrinsic combinatorial interest in ultrahomogeneous structures, part of their appeal is their applicability in model theory in the form of stability theory,~$\omega$-categoricity, and Fra{\"{\i}}ss{\'{e}} limits~\cite{Ahlman2018LimitLaws,Fraisse1953}. They also have natural applications in the study of permutation groups and Ramsey theory~\cite{DBLP:journals/combinatorics/BottcherF13}.

Homogeneity is usually considered for countable structures. Having algorithmic applications in mind, however, in this paper we will focus exclusively on finite structures. In fact, even when only considering finite ultrahomogeneous structures, there exists an extensive body of research. 	
Finite simple graphs have been independently classified by Gardiner~\cite{MR419293} and by Gol'fand and Klin~\cite{GolfandKlin1978}. These graphs are, up to taking complements, disjoint unions of complete graphs all of the same order, the~$5$-cycle, and the line graph of the~$K_{3,3}$ (also known as~$3\times 3$ rook's graph).
Subsequently Lachlan classified  finite  ultrahomogeneous  digraphs~\cite{lachlan82-finite-homogeneous-simple-digraphs}. These include further infinite families and some exceptional graphs. The oriented graphs among them are described in  Theorem~\ref{thm: lachlan_asymmetric} and Figure~\ref{fig: uh-monochr-graphs} below.
Apart from graphs, finite ultrahomogeneous groups~\cite{https://doi.org/10.1112/S0024610700001484,https://doi.org/10.1112/jlms/s2-44.1.102,Li_1999} and finite ternary relational structures (sometimes called 3-graphs)  \cite{MR1373115} have been classified.

A major research program initiated by Cherlin that aimed at classifying ultrahomogeneous edge colored graphs (or equivalently structures with only binary relations) recently led to the classification of their automorphism groups (primitive binary permutation groups)~\cite{GillLiebeckSpiga2022}.

Crucially, none of the structures discussed so far have unary relations (in the terminology of graphs they are without loops or vertex colors). This implies in particular, that the structures are transitive. However, this limits the applicability of the results because no structures with different types of atoms (vertices) can be captured. Such types of vertices can be modeled with vertex colors.
For ultrahomogeneous vertex-colored graphs, research is limited. There is a classification when the color classes form independent sets~\cite{truss:countable-homogeneous-multipartite-graphs-2}. More generally, the ultrahomogeneous vertex-colored undirected finite graphs were recently classified~\cite{HeinrichSchneiderSchweitzer2020}.

\paragraph*{Results.} In this paper we classify ultrahomogeneous vertex-colored oriented finite graphs. This includes vertex-colored tournaments. In the language of relational structures this means we classify structures with a binary asymmetric relational structure and an arbitrary number of unary relations.

The classification essentially says the following. Up to certain forms of equivalence (namely color change and bichromatic symmetrization, see Definition~\ref{def:equivalence}) the graphs are color disjoint unions (i.e., disjoint unions of graphs with disjoint vertex-colors) of specific types of blow-ups of the following graphs:
\begin{enumerate}
	\item the graph $H_0$,
	\item a disjoint union of finitely many isomorphic copies of a discretely colored tournament,
	\item a graph in which each color class forms a directed triangle and each pair of color classes is joined by a directed 6-Cycle~$\overrightarrow{C_6}$.
	
\end{enumerate}

Figure~\ref{fig:complex:example:UHgraph} shows an example containing each of the three possible building blocks but without blow-ups. See Definition~\ref{definition:blow:up} for a formal definition of the specific blow-ups used in our classification. Also see Theorem~\ref{thm: last thm} for the formal theorem describing our classification. 	The monochromatic graphs that may appear as subgraphs induced by the color classes are described in Theorem~\ref{thm: lachlan_asymmetric} and shown in Figure~\ref{fig: uh-monochr-graphs}.

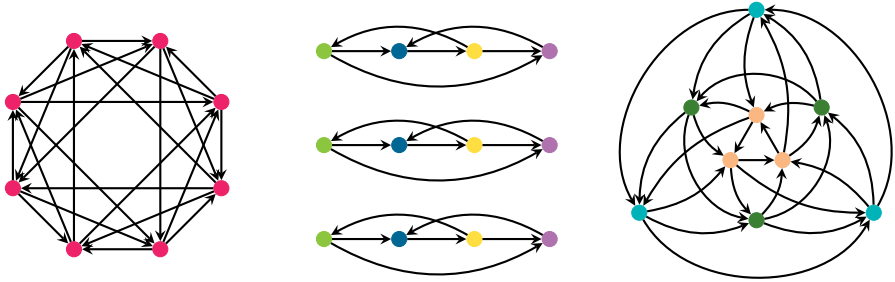
\begin{figure}
	\begin{tikzpicture}
		\begin{scope}
			\def\ynumber{8}
			\def\rad{1.5cm};
			\def\iangle{360/\ynumber}
			\foreach \i in {1,...,\ynumber}{
				\node[xsvertex, WildStrawberry] (x\i) at (270-\iangle/2+\i*\iangle:\rad) {};
			}
			\draw[harc] (x1) edge (x2) (x2) edge (x5) (x5)edge (x6) (x6) edge (x1);
			\draw[harc] (x3) edge (x4) (x4) edge (x7) (x7)edge (x8) (x8) edge (x3);
			\draw[harc] (x1) edge (x8) (x8) edge (x5) (x5)edge (x4) (x4) edge (x1);
			\draw[harc] (x3) edge (x2) (x2) edge (x7) (x7)edge (x6) (x6) edge (x3);
			\draw[harc] (x2) edge (x8) (x8) edge (x6) (x6)edge (x4) (x4) edge (x2);
			\draw[harc] (x7) edge (x1) (x1) edge (x3) (x3)edge (x5) (x5) edge (x7);
		\end{scope}
		
		\begin{scope}[shift={(3.75,0)}]
		  				\foreach \h in {-1.25,0,1.25}
		  				{
			  				\begin{scope}[shift={(0,\h)}]
				    					\node[xsvertex, LimeGreen] (r) at (-1,0) {};
				    					\node[xsvertex, MidnightBlue] (b) at (0,-0) {};
				    					\node[xsvertex, Goldenrod] (y) at (1,0) {};
				    					 \node[xsvertex, Orchid] (o) at (2,0) {};
				   					 \draw[harc] (r)edge(b) (b)edge(y) (y)edge(o);
				    					 \draw[harc] (y)edge[bend right=30](r) (r)edge[bend right=30](o) (o)edge[bend right=30](b);
			    					 
			  				\end{scope}[shift={(0,\h)}]
			    					
			    			}
		\end{scope}
		
		\begin{scope}[shift={(8.5,0)}]
		\def\irad{.4cm};
		\def\vxnumber{3}
		\def\angle{360/\vxnumber}
		\foreach \i in {1,...,\vxnumber}{
			\node[xsvertex, Apricot] (i\i) at (270-\angle/2+\i*\angle:\irad) {};
			}
		\draw[harc] (i1) edge (i2) (i2) edge (i3) (i3) edge (i1);
		\def\orad{1cm};
		\foreach \i in {1,...,\vxnumber}{
			\node[xsvertex, OliveGreen] (o\i) at (210-\angle/2+\i*\angle:\orad) {};
		}
		\draw[harc, bend right = 50] (o1) edge (o2) (o2) edge (o3) (o3) edge (o1);
		\draw[harc, bend right = 20] (i3) edge (o1) (o2) edge (i2);
		\draw[harc, bend right = 20] (i1) edge (o2) (o3) edge (i3);
		\draw[harc, bend right = 20] (i2) edge (o3) (o1) edge (i1);
		\def\yrad{1.8cm};
		\def\vxnumber{3}
		\def\angle{360/\vxnumber}
		\foreach \i in {1,...,\vxnumber}{
			\node[xsvertex, BlueGreen] (y\i) at (270-\angle/2+\i*\angle:\yrad) {};
		}
		\draw[harc, bend right = 60] (y1) edge (y2) (y2) edge (y3) (y3) edge (y1);
		\draw[harc, bend right = 20] (i3) edge (y1) (y2) edge (i2);
		\draw[harc, bend right = 20] (i1) edge (y2) (y3) edge (i3);
		\draw[harc, bend right = 20] (i2) edge (y3) (y1) edge (i1);
		
		\draw[harc, bend right = 25] (y3) edge (o1) (o2) edge (y2);
		\draw[harc, bend right = 25] (y1) edge (o2) (o3) edge (y3);
		\draw[harc, bend right = 25] (y2) edge (o3) (o1) edge (y1);
		\end{scope}

	\end{tikzpicture}
	\caption{Example of an ultrahomogeneous oriented vertex colored graph (no blow-ups).}
	\label{fig:complex:example:UHgraph}
\end{figure}

\paragraph*{Techniques.} 
We develop techniques to analyze under what conditions and how two monochromatic ultrahomogeneous graphs with different vertex colors can be non-trivially connected to form a new ultrahomogeneous graph.
The general extension theorem (Theorem~\ref{thm: general-extension}) describes five necessary and sufficient conditions for this. 
The crucial insight is that the automorphism group of the one side must be compatible with what we call an ultrahomogeneous system of partitions on the other side. 
This severely restricts pairs of ultrahomogeneous graphs that can be connected to create new ultrahomogeneous graphs. 

A second concept we introduce is a certain kind of a blow-up. Our concept here is more general than other blow-ups that have been previously used in the context of ultrahomogeneous structures. It allows for nontrivial connections between the 
replaced blocks. It thus allows us to organically recover some of the exceptional graphs as blow-ups of smaller graphs.

Overall these two techniques provide us with a clean and systematic way of analyzing highly symmetric graphs, dramatically reducing the number of cases that need to be considered.
In fact the classification follows using fairly easy counting arguments. 
We first classify bichromatic oriented ultrahomogeneous graphs (Theorem~\ref{bichromatic:theorem}).
We then extend the bichromatic case to the general case~(Theorem~\ref{thm: last thm}).
We highlight that our general extension theorem and the more general blow-ups are not particular to the oriented case and apply to vertex colored binary relational structures in general.

\section{Preliminaries}
For $n,n' \in \mathbb{N}$ we set $[n] \coloneqq \{1, 2, \dots, n\}$ and~$[n',n]\coloneqq \{n', \dots, n\}$.
For $i \in \mathbb{N}_{\geq 1}$ we denote the projection on the $i$-th coordinate by $\pi_i$, where the underlying set will
always be clear from context.
An \emph{ordered partition} $A$ of a set $V$ is a tuple $(P_1, P_2, \dots, P_k)$ of disjoint non-empty subsets of $V$ such that $\bigcup_{i \in [k]}P_i = V$. If $k = 1$, then $A$ is called the \emph{trivial} partition of $V$. A partition $(P_1, P_2, \dots, P_k)$ is \emph{discrete} if $|P_i| = 1$ for all $i \in [k]$. We use the symbol~$\dunion$ for the disjoint union.
For a set $V$ we and $\mathcal{S} \subseteq 2^V$ we set $\bigcup \mathcal{S} \coloneqq \bigcup_{S \in \mathcal{S}} S$.

\paragraph*{Digraphs.}
A \emph{directed graph} (short: \emph{digraph}) $G$ is a pair $(V, E)$ where $V$ is a non-empty set and $E \subseteq V^2$.
In this paper, all considered digraphs are \emph{finite} and \emph{loopless}.
An element of~$V$ is a \emph{vertex} of $G$ and an element of $E$ is an \emph{edge} of~$G$. With $V(G)$ and $E(G)$ we refer to the vertices and edges of~$G$, respectively.
A digraph~$G$ is an \emph{oriented graph} if $(u,v)\in E(D)$ implies $(v,u) \notin E(D)$ for all vertices $u$ and $v$ of~$G$.
If $E(G) = V^2\setminus \{(v,v) \colon v \in V(G)\}$, then $G$ is \emph{complete}.

\paragraph*{(Di-)graph families.}
Fix $n \in \mathbb{N}_{\geq 1}$.
The \emph{edgeless} (di-)graph $E_n$ has vertex set~$[n]$ and an empty edge set.
The \emph{directed cycle} $\overrightarrow{C_n}$ is the oriented graph on the vertex set $[n]$ with $E(\overrightarrow{C_n}) = \{(i,i+1)\colon i \in [n-1] \}\cup \{(n,1)\}$. For simplicity, we call~$\cthree$ the \emph{directed triangle}.

\paragraph*{Complete colored digraphs.}
A complete digraph $G$ with a vertex coloring~$\vcol_G$ and an edge coloring~$\ecol_G$ is a \emph{complete colored digraph (CCD)}.
Note that every directed graph $D$ with a vertex coloring $\vcol_D$ can be regarded as a CCD~$G$ with $V(G) = V(D)$, $\vcol_G = \vcol_D$, and $\ecol_G((u,v)) = 1$ if $(u,v) \in E(D)$ and $\ecol_G((u,v)) = 0$ otherwise.
Note that this translation from CCDs to vertex-colored directed graphs preserves (partial) isomorphisms.
Most of our techniques are formulated as statements on CCDs but in this manner they can easily be transferred to statements on directed graphs.
An inclusion-wise maximal subset $U$ of $V(G)$ with $|\vcol_G(U)| = 1$ is a \emph{vertex color class} of~$G$.
An \emph{edge color class} of $G$ is defined analogously.
If $G$ is a CCD and $U \subseteq V(G)$, then
the CCD~$G[U]$ with vertex set~$U$, vertex coloring $\chi_G|_{U}$, and edge coloring $\ecol_G|_{U \times U}$ 
is the \emph{induced subgraph of~$G$ by~$U$}.

\paragraph{Connectivity types.}
Let $R$ and $B$ be two disjoint vertex subsets of a CCD~$G$.
We say that $R$ and $B$ are \emph{homogeneously connected} if both ${\ecol_G}|_{R \times B}$ and ${\ecol_G}|_{B \times R}$ are constant. 
If $R$ and $B$ are not homogeneously connected and there exists a bijection $\alpha\colon R \to B$ where
${\ecol_G}|_{B \times R \setminus \{(\alpha(r),r)\colon r \in R\}}$ , ${\ecol_G}|_{\{(r, \alpha(r))\colon r \in R\}}$, ${\ecol_G}|_{\{(\alpha(r),r)\colon r \in R\}}$, and ${\ecol_G}|_{R \times B \setminus \{(r, \alpha(r))\colon r \in R\}}$ are constant, then $R$ and $B$ are \emph{matching-connected}.
If $|\vcol_G(V(G))| = |V(G)|$, then $\vcol_G$ is a \emph{discrete coloring.}

\paragraph{Isomorphisms and ultrahomogeneity.}
Let $G$ and $G'$ be two CCDs.
A bijection $\varphi\colon V(G) \to V(G')$ which satisfies for all vertices $u$ and $v$ in $V(G)$ that $\vcol_{G'}(\varphi(v)) = \vcol_G(v)$ and $\ecol_{G'}((\varphi(u),\varphi(v))) = \ecol_G((u,v))$ is an \emph{isomorphism}. If additionally $G = G'$, then $\varphi$ is an \emph{automorphism} of $G$.
The set of all automorphisms of $G$ forms a group under composition, which we denote by $\Aut(G)$.
An isomorphism of two induced subgraphs of $G$ is a \emph{partial isomorphism}.
A partial isomorphism $\varphi'\colon U \to W$ of $G$ \emph{extends to an automorphism} of $G$ if there exists~$\psi \in \Aut(G)$ such that $\psi|_{U} = \varphi'$.
A CCD $G$ is \emph{ultrahomogeneous} if every partial isomorphism of $G$ extends to an automorphism of $G$.

\paragraph{The wreath product of graphs.}
Let $D$ and $D'$ be two CCDs.
The \emph{wreath product}\footnote{Also called the \emph{lexicographic product.}} $D \cdot D'$  is a CCD with vertex set $V(D) \times V(D')$, with vertex colors
$\vcol_{D \cdot D'}(u,u') \coloneqq (\vcol_D(u), \vcol_{D'}(u'))$ for all $(u,u') \in V(D) \times V(D')$, and edge colors
\vspace{-0.25cm}
\begin{align*}
	\ecol_{D\cdot D'}((u,u'),(v,v'))\coloneqq
	\begin{cases}
		\ecol_{D'} ((u',v')) &\text{if $u = v$},\\
		\ecol_D ((u,v)) &\text{if $u \neq v$}.
	\end{cases}
\end{align*}

\paragraph*{Groups.}	
Let $V$ be a non-empty set.
We denote the \emph{symmetric group} of all permutations of $V$ by $\Sym(V)$.
A \emph{permutation group} $\Gamma$ on $V$ is a subgroup of~$\Sym(V)$.
For $v \in V$ and $\gamma \in \Gamma$ we set $v^{\gamma} \coloneqq \gamma(v)$.
An \emph{action} of $\Gamma$ on $V$ is a homomorphism $\phi$ from $\Gamma$ to $\Sym(V)$.
The image of an action of $\Gamma$ on $V$ is a subgroup of~$\Sym(V)$ called the permutation group \emph{induced} by $\Gamma$ on $V$, which we denote by $\Gamma^V$.
The \emph{orbit} of an element~$x$ in $V$ is the set $x^\Gamma\coloneqq \{x^\gamma\colon \gamma\in\Gamma\}$. 
We say that $\Gamma$ is \emph{transitive} on $V$ if $x^\Gamma=V$ for all $x\in V$. 
The \emph{stabilizer} of an element~$x$ in $V$ is the set $\Stab_\Gamma(x):=\{\gamma\in \Gamma \colon x^\gamma=x\}$. The \emph{setwise stabilizer of a subset $X$} of $V$ is the set $\Stab_\Gamma(X)$ of elements $\gamma \in \Gamma$ such that $X^\gamma=X$. The \emph{pointwise stabilizer of $X$} is the set $\mathrm{pw}\Stab(X)=\bigcap_{x\in X} \Stab_\Gamma(x)$.
A \emph{permutational isomorphism} between two permutation groups~$\Gamma\leq \Sym(V)$ and~$\Gamma'\leq \Sym(V')$ is a bijection~$\rho\colon V\rightarrow V'$ such that~$\Gamma' = \{ \rho \gamma \rho^{-1}\colon \gamma \in \Gamma \}$.

\paragraph{Block systems.} Let $\Gamma \leq \Sym(V)$ be transitive.
A \emph{block} is a subset $X$ of $V$ such that $X^\gamma = X$ or $X^\gamma\cap X=\emptyset$ for all $\gamma\in \Gamma$.
If $X=\{x\}$ for some $x\in V$ or $X=V$, then the block $X$ is \emph{trivial}, and otherwise it is \emph{non-trivial}.
If $X$ is a block, then the set $\{X^\gamma\colon \gamma\in \Gamma\}$ is an unordered partition of $V$ which is invariant under the action of $\Gamma$. In this case we call $\{X^\gamma\colon \gamma\in \Gamma\}$ a \emph{block system} of $V$.
A block system is \emph{trivial} if its blocks are trivial.
Note that every permutation in $\Gamma$ naturally induces a permutation of the blocks in a block system $\mathcal{B}$ of $V$. We denote the subgroup of $\Sym(\mathcal{B})$ which contains all permutations induced by permutations in $\Gamma$ by $\Gamma^{\mathcal{B}}$.

\paragraph{The wreath product of groups.}
Let $\Gamma\leq \Sym(V)$ and $\Gamma'\leq \Sym(W)$. The \emph{wreath product} of $\Gamma$ with $\Gamma'$, denoted $\Gamma\wr\Gamma'$ is the group of all permutations $\delta$ of $V\times W$ for which there exist $\gamma\in \Gamma$ and an element $\gamma_v'$ of $\Gamma'$ for each $v\in V$, such that 
$\delta((v,w))=(\gamma(v),\gamma_v'(w)) \text{ for every } (v,w)\in V\times W$.

\begin{remark}
	If $G$ and $G'$ are two edge-color disjoint CCDs with respective automorphism groups $\Gamma$ and $\Gamma'$, then the automorphism group of $G \cdot G'$ is permutationally isomorphic to $\Gamma'\wr\Gamma$.
\end{remark}

\paragraph{Families of groups.}
Fix $n \in \mathbb{N}_{\geq 1}$.
Set $\Sym(n)\coloneqq \Sym([n])$.
We denote the cyclic group of order $n$ by $\mathbb{Z}_n$ and the alternating group on $n$ vertices by $\Alt(n)$.

\paragraph{Monochromatic graphs.} Lachlan's~\cite{lachlan82-finite-homogeneous-simple-digraphs} classic results classify the mono\-chromatic ultrahomogeneous graphs. The oriented graphs among them, which are relevant to our classification, are depicted in Figure~\ref{fig: uh-monochr-graphs} and are as follows. \begin{theorem}[\cite{lachlan82-finite-homogeneous-simple-digraphs}]\label{thm: lachlan_asymmetric}
	An oriented graph is ultrahomogeneous if and only if it is isomorphic to one of
	$\cfour$, $E_n$, $E_n\cdot \cthree$, $\cthree\cdot E_n$, or $H_0$ for some $n\ \in \mathbb{N}\setminus \{0\}$.
\end{theorem}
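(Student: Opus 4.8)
The plan is to prove the two directions separately, starting with the easy one: that every graph in the list is ultrahomogeneous. For $E_n$ there is nothing to prove, since $\Aut(E_n)=\Sym(n)$ and a partial isomorphism is merely a bijection between two vertex subsets. For $\cthree$ and $\cfour$ there are, up to the action of $\mathbb{Z}_3$ respectively $\mathbb{Z}_4$, only boundedly many partial isomorphisms, and each is immediately checked to extend. For the disjoint union of $n$ directed triangles $E_n\cdot\cthree$ and for $\cthree\cdot E_n$ one observes that every partial isomorphism respects the canonical partition of the vertex set coming from the wreath structure, hence splits into partial isomorphisms of the two factors; these extend, and the extensions can be reassembled into an automorphism lying in $\mathbb{Z}_3\wr\Sym(n)$ respectively $\Sym(n)\wr\mathbb{Z}_3$. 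Finally, for $H_0$ one verifies ultrahomogeneity directly from its explicit description.

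Now suppose $G$ is a finite ultrahomogeneous oriented graph; the task is to show $G$ is one of the listed graphs. Since $G$ has no vertex colors, any two vertices --- and likewise any two arcs --- induce isomorphic substructures, so $G$ is vertex-transitive and, if it has an arc, arc-transitive. If $E(G)=\emptyset$ then $G=E_n$ and we are done, so assume $G$ has an arc. If in addition every two distinct vertices of $G$ are adjacent, i.e.\ $G$ is a tournament, I would argue by induction on $|V(G)|$ that $G\in\{E_1,\cthree\}$: for $|V(G)|\ge 2$, vertex-transitivity forces $|V(G)|$ to be odd and hence at least $3$; for a vertex $v$, the out-neighborhood $G[N^+(v)]$ is itself ultrahomogeneous, because a partial isomorphism of it becomes a partial isomorphism of $G$ once we adjoin $v\mapsto v$, and any automorphism of $G$ extending the latter must fix $v$ and hence stabilize $N^+(v)$; by induction $G[N^+(v)]\in\{E_1,\cthree\}$, so $|V(G)|\in\{3,7\}$. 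The value $7$ is ruled out because then $G$ would be forced to be the Paley tournament on seven vertices, which is not ultrahomogeneous --- for instance its automorphism group has order $21$, so the pointwise stabilizer of two adjacent vertices is trivial and therefore cannot act transitively on the two vertices that complete a directed triangle with the pair, contradicting ultrahomogeneity. Hence $G=\cthree$.

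It remains to treat the \emph{mixed} case, where $G$ has both an arc and a non-adjacent pair. The idea is to exhibit a nontrivial block system of $\Aut(G)$ and peel off a wreath factor. If $\Aut(G)$ is imprimitive on $V(G)$, fix a nontrivial block system $\mathcal B$ with block $B$ and distinguish whether distinct blocks are homogeneously connected. If they are, the quotient on $\mathcal B$ is a well-defined oriented graph $G_1$, one has $G=G_1\cdot G[B]$, and both $G_1$ and $G[B]$ are ultrahomogeneous oriented graphs on fewer vertices; by induction they lie in the list, and a short check of which wreath products of listed graphs are again ultrahomogeneous and mixed leaves exactly $E_n\cdot\cthree$ and $\cthree\cdot E_n$. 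If some two blocks are not homogeneously connected, a counting argument forces the blocks to have size $2$, the block system to consist of exactly two blocks, and these two blocks to be matching-connected, which pins down $G=\cfour$. If instead $\Aut(G)$ is primitive on $V(G)$, one shows the only possibility is $G=H_0$.

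I expect the main obstacle to be this last point: ruling out every primitive ultrahomogeneous oriented graph other than $H_0$, together with the non-homogeneously-connected subcase of the imprimitive analysis. Both come down to a delicate finite case analysis driven by counting isomorphism types of small induced subgraphs --- and, in the primitive case, by restricting which permutation groups can occur as $\Aut(G)$ --- which is the combinatorial core of Lachlan's original argument. The general extension theorem and the more flexible blow-ups developed later in this paper are precisely the kind of machinery designed to organize and shorten analyses of this type, though for this particular theorem we simply invoke Lachlan's classification.
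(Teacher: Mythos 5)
First, note that the paper offers no proof of this statement at all: it is imported wholesale from Lachlan's classification of finite homogeneous digraphs, so your attempt is necessarily a reconstruction of (part of) Lachlan's argument rather than a variant of anything in this paper. Your forward direction and your tournament analysis are essentially sound: the induction via $G[N^+(v)]$ being ultrahomogeneous is correct, parity of $|V(G)|$ rules out even orders, and the order-$21$ automorphism group of the Paley tournament on $7$ vertices does kill that case (you would still need to justify "forced to be the Paley tournament," e.g.\ via the fact that a vertex-transitive tournament on a prime number of vertices is a circulant, but that is fillable).

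The genuine gap is in your structural dichotomy for the mixed case: you reserve $H_0$ for the case that $\Aut(G)$ is primitive, and claim that in the imprimitive case with non-homogeneously connected blocks a counting argument forces exactly two matching-connected blocks of size $2$, hence $G\cong\cfour$. But $H_0$ is \emph{not} primitive: $\Aut(H_0)\cong\mathrm{SL}(2,3)$ has order $24$ in degree $8$ (no primitive group of degree $8$ has order $24$), and concretely the four non-adjacent pairs of $H_0$ form a block system with four blocks of size $2$, on which $\Aut(H_0)$ induces $\Alt(4)$ (see Table~\ref{tab:my-table}). Distinct blocks of this system are joined by directed $4$-cycles, so they are not homogeneously connected, and $H_0$ cannot be written as a wreath product over this system (that would force $|\Aut(H_0)|\ge 2^4\cdot 4>24$). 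Hence $H_0$ lands squarely in your "imprimitive, some pair of blocks not homogeneously connected" branch, where your claimed conclusion $G\cong\cfour$ is false, and your primitive branch is actually empty (which you would separately have to prove). Repairing this requires redoing the hardest part of the analysis --- classifying the imprimitive ultrahomogeneous oriented graphs whose blocks are non-homogeneously connected, which must produce both $\cfour$ and $H_0$ --- and this is precisely the combinatorial core you cannot defer to "a short check."
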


\begin{figure}[h!]
	\centering
	\begin{tikzpicture}[scale=.9]
		\begin{scope}[shift={(-.5,0)}]
			\def\ynumber{4}
			\def\rad{1cm};
			\def\iangle{360/\ynumber}
			\foreach \i in {1,...,\ynumber}{
				\node[xsvertex,WildStrawberry] (x\i) at (270-\iangle/2+\i*\iangle:\rad) {};
			}
		
		\begin{scope}[on background layer]
			\draw[shadearc=.6] (x1) -- (x2);
			\draw[shadearc=.6] (x2) -- (x3);
			\draw[shadearc=.6] (x3) -- (x4);
			\draw[shadearc=.6] (x4) -- (x1);
		\end{scope}
		\end{scope}
		\node[] (c4) at (-.5,-2) {$\cfour$};
		\begin{scope}[shift={(4,0)}]
			\def\ynumber{8}
			\def\rad{1.5cm};
			\def\iangle{360/\ynumber}
			\foreach \i in {1,...,\ynumber}{
				\node[xsvertex, WildStrawberry] (x\i) at (270-\iangle/2+\i*\iangle:\rad) {};
			}
			\begin{scope}[on background layer]
				\draw[shadearc=.6] (x1) --(x2);
				\draw[shadearc=.6] (x2) -- (x5); 
				\draw[shadearc=.6] (x5) -- (x6);
				\draw[shadearc=.6] (x6) -- (x1);
				\draw[shadearc=.6] (x3) -- (x4);
				\draw[shadearc=.6] (x4) -- (x7);
				\draw[shadearc=.6] (x7) -- (x8);
				\draw[shadearc=.6] (x8) -- (x3);
				\draw[shadearc=.6] (x1) -- (x8);
				\draw[shadearc=.6] (x8) -- (x5);
				\draw[shadearc=.6] (x5) -- (x4);
				\draw[shadearc=.6] (x4) -- (x1);
				\draw[shadearc=.6] (x3) -- (x2);
				\draw[shadearc=.6] (x2) -- (x7);
				\draw[shadearc=.6] (x7) -- (x6);
				\draw[shadearc=.6] (x6) -- (x3);
				\draw[shadearc=.6] (x2) -- (x8);
				\draw[shadearc=.6] (x8) -- (x6);
				\draw[shadearc=.6] (x6) -- (x4);
				\draw[shadearc=.6] (x4) -- (x2);
				\draw[shadearc=.6] (x7) -- (x1);
				\draw[shadearc=.6] (x1) -- (x3);
				\draw[shadearc=.6] (x3) -- (x5);
				\draw[shadearc=.6] (x5) -- (x7); 
			\end{scope}

		\end{scope}
		\node[] (h0) at (4,-2) {$H_0$};
		\begin{scope}[shift={(8,0)}]
			\def\ynumber{7}
			\def\mrad{1cm};
			\def\iangle{360/\ynumber}
			\foreach \i in {1,...,\ynumber}{
				\node[xsvertex, WildStrawberry] (x\i) at (210-\iangle/2+\i*\iangle:\mrad) {};
			}	
		\end{scope}
		\node[] (h0) at (8,-2) {$E_n$};
		\begin{scope}[shift={(1,-4)}, scale=.6]
			\def\vxnumber{3}
			\def\angle{360/\vxnumber}
			\def\orad{1.5cm};
			\foreach \i in {1,...,\vxnumber}{
				\node[] (o\i) at (210-\angle/2+\i*\angle:\orad) {
					\begin{tikzpicture}
						\def\irad{.4cm};
						\def\vxnumber{3}
						\def\angle{360/\vxnumber}
						\foreach \i in {1,...,\vxnumber}{
							\node[xsvertex, WildStrawberry] (i\i) at (270-\angle/2+\i*\angle:\irad) {};
						}
					\begin{scope}[on background layer]
						\draw[shadearc=.6] (i1)--(i2);
						\draw[shadearc=.6] (i2)--(i3);
						\draw[shadearc=.6] (i3)--(i1);
					\end{scope}
				\end{tikzpicture} };
			}
			\node[] (enc3) at (0,-3) {$E_n\cdot \cthree$};
			\node[] (last) at (8,-3) {$\cthree\cdot E_n$};
		\end{scope}
		\begin{scope}[shift={(7,-4)}, scale=.8]
			\def\irad{.4cm};
			\def\vxnumber{3}
			\def\angle{360/\vxnumber}
			\def\orad{1.5cm};
			\foreach \i in {1,...,\vxnumber}{
				\node[circle, inner sep=-1.5] (o\i) at (210-\angle/2+\i*\angle:\orad) {
					\begin{tikzpicture}
						\def\ynumber{5}
						\node[circle, minimum size = 1.2cm, fill = gray, opacity=.5] (background) at (0,0) {};
						\def\mrad{.4cm};
						\def\iangle{360/\ynumber}
						\foreach \i in {1,...,\ynumber}{
							\node[xsvertex, WildStrawberry] (x\i) at (210-\iangle/2+\i*\iangle:\mrad) {};
						}
				\end{tikzpicture} };
			}
			\draw[harc, bend right = 40] (o1) edge (o2);
			\draw[harc, bend right = 40] (o2) edge (o3);
			\draw[harc, bend right = 40] (o3) edge (o1);
		\end{scope}
		
	\end{tikzpicture}
	
	\caption{The ultrahomogeneous oriented (monochromatic) graphs. For each of the three infinite families
		$\{E_n\colon n \in \mathbb{N}_{\geq 1} \}$, 
		$\{E_n \cdot \protect\cthree\colon n \in \mathbb{N}_{\geq 1} \}$, and $\{\protect\cthree \cdot E_n\colon n \in \mathbb{N}_{\geq 1}\}$ we show one example.
		An arrow from one gray circle to another indicates all arcs from vertices in the first circle to vertices in the second are present.}
	\label{fig: uh-monochr-graphs}
\end{figure}
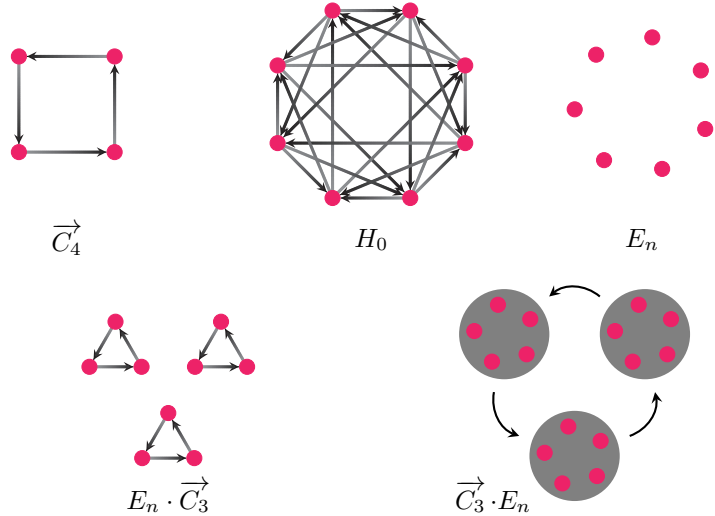

	\section{Extension theorems} \label{section: extension theorem}	
	Let $G$ be a CCD, $A$ an ordered partition of $V(G)$, and $\varphi \in \Aut(G)$.
	By~$\varphi(A)$ we denote the ordered partition of $V(G)$ with $\pi_i(\varphi(A)) = \varphi(\pi_i(A))$ for all $i \in [|A|]$.
	Let $A_1, A_2, \dots, A_k$ be a finite sequence of partitions of $V(G)$.
	We set
	\[\vcol_G(A_1, A_2, \dots, A_k)\colon V \to \vcol_G(V(G)) \times \mathbb{N}^k \quad v \mapsto (\vcol_G(v),i_1,\ldots,i_k), \]
	where~$\forall j\in [k]\  v \in \pi_{i_j}(A_{j})$.
	That is, two vertices $u$ and $v$ of $G$ have the same color with respect to the coloring $\vcol_G(A_1, A_2, \dots, A_k)$ precisely if they have the same color with respect to $\vcol_G$ and for each $j \in [k]$ the two vertices $u$ and $v$ lie in the same part of the partition $A_j$.

	\begin{definition}[Ultrahomogeneous system of partitions]
		Let $G$ be a CCD.
		An ordered partition $A$ of $V(G)$ is \emph{ultrahomogeneous} if~$G$ with the vertex coloring $\vcol_G(A)$ is ultrahomogeneous.
		We set $\mathcal{A}(A) \coloneqq \{\varphi(A) \colon \varphi \in \Aut(G)\}$.
		If~$A$ is an ordered partition of~$V(G)$ such that
		for every finite sequence $A_1, A_2, \dots, A_{\ell}$ of partitions in $\mathcal{A}(A)$ the CCD~$G$ with the vertex coloring $\vcol_G(A_1, A_2, \dots, A_{\ell})$ is ultrahomogeneous, then we call $\mathcal{A}(A)$ an \emph{ultrahomogeneous system of partitions}.
		If $|\mathcal{A}(A)| \neq 1$, then $\mathcal{A}(A)$ is called \emph{non-trivial}.
	\end{definition}
	
	\begin{definition}[Easygoing CCD with respect to a block system]
		Let~$H$ be a vertex-monochromatic CCD with a block system $\mathcal{B}$. We call~$H$ \emph{easygoing} with respect to~$\mathcal{B}$ if each subset~$\mathcal{B}'\subseteq \mathcal{B}$ satisfies
		\[ \left(\mathrm{pw}\Stab_{\Aut(H)} \left(\bigcup \mathcal{B}'\right)\right)^\mathcal{B}=  \Stab_{\Aut(H)^\mathcal{B}}\left(\mathcal{B}'\right),\]
		i.e., the pointwise stabilizer of the set~$\bigcup \mathcal{B}' \coloneqq \bigcup_{B \in \mathcal{B}'} B$ 
		in the group~$\Aut(H)$ induces the same group on~$\mathcal{B}$ as the pointwise stabilizer of~$\mathcal{B}'$ in the induced group~$\Aut(H)^\mathcal{B}$.
	\end{definition}
	
	Note that every block system $\mathcal{B}$ of a vertex-monochromatic CCD $H$ satisfies $\left(\mathrm{pw}\Stab_{\Aut(H)} \left(\bigcup \mathcal{B}'\right)\right)^\mathcal{B} \subseteq \Stab_{\Aut(H)^\mathcal{B}}\left(\mathcal{B}'\right).$
	The property of being easygoing will be crucial for us to be able to specify how the vertices inside~$\bigcup \mathcal{B}'$ are mapped while still being able to choose how blocks outside~$\mathcal{B}'$ are mapped. This is done as follows.
	\begin{lemma}\label{lem:application:of:easygoing}
		Let $H$ be a vertex-monochromatic CCD which is easygoing with respect to a block system $\mathcal{B}$.
		If two automorphisms~$\psi$ and $\varphi$ in $\Aut(H)$ satisfy  $\psi^{\mathcal{B}}|_{\mathcal{B}'}=\varphi^{\mathcal{B}}|_{\mathcal{B}'}$,
		then there is~$\tau \in \Aut(H)$ such that~$\tau^{\mathcal{B}}=\psi^{\mathcal{B}}$ and~$\tau (x)=\varphi(x)$ for all~$x\in\bigcup \mathcal{B}'$.
	\end{lemma}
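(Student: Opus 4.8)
The plan is to exploit the easygoing hypothesis to merge the ``global'' behavior of $\psi$ on the block system $\mathcal{B}$ with the ``local'' behavior of $\varphi$ inside $\bigcup\mathcal{B}'$. First I would reduce to the case where $\psi^{\mathcal{B}}|_{\mathcal{B}'}$ and $\varphi^{\mathcal{B}}|_{\mathcal{B}'}$ are the identity on $\mathcal{B}'$: replace $\varphi$ by $\sigma\varphi$ and $\psi$ by $\sigma\psi$ for a suitable $\sigma\in\Aut(H)$ with $\sigma^{\mathcal{B}}=(\psi^{\mathcal{B}})^{-1}$ restricted appropriately — more carefully, pick $\sigma\in\Aut(H)$ with $\sigma^{\mathcal{B}}=(\psi^{\mathcal{B}})^{-1}$, and observe that it suffices to find $\tau'=\sigma\tau$ with $(\tau')^{\mathcal{B}}=\mathrm{id}$ on $\mathcal{B}'$ (and prescribed elsewhere) agreeing with $\sigma\varphi$ on $\bigcup\mathcal{B}'$; since $(\sigma\psi)^{\mathcal{B}}|_{\mathcal{B}'}=(\sigma\varphi)^{\mathcal{B}}|_{\mathcal{B}'}=\mathrm{id}_{\mathcal{B}'}$, this is indeed the special case. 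So assume from now on $\psi^{\mathcal{B}}|_{\mathcal{B}'}=\varphi^{\mathcal{B}}|_{\mathcal{B}'}=\mathrm{id}_{\mathcal{B}'}$.

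Next, consider $\rho\coloneqq\psi^{-1}\varphi\in\Aut(H)$. By construction $\rho^{\mathcal{B}}|_{\mathcal{B}'}=\mathrm{id}_{\mathcal{B}'}$, i.e.\ $\rho$ fixes every block in $\mathcal{B}'$ setwise, hence $\rho^{\mathcal{B}}\in\Stab_{\Aut(H)^{\mathcal{B}}}(\mathcal{B}')$ (note: stabilizing each block of $\mathcal{B}'$ in particular stabilizes the set $\mathcal{B}'$). Now the easygoing property applied to the subset $\mathcal{B}'\subseteq\mathcal{B}$ gives
\[
\Stab_{\Aut(H)^{\mathcal{B}}}(\mathcal{B}')=\bigl(\mathrm{pw}\Stab_{\Aut(H)}(\textstyle\bigcup\mathcal{B}')\bigr)^{\mathcal{B}},
\]
so there exists $\eta\in\mathrm{pw}\Stab_{\Aut(H)}(\bigcup\mathcal{B}')$ — that is, $\eta\in\Aut(H)$ fixing every vertex of $\bigcup\mathcal{B}'$ pointwise — with $\eta^{\mathcal{B}}=\rho^{\mathcal{B}}$. (Here I must be careful that the easygoing identity as stated literally says the pointwise stabilizer of the \emph{set} $\bigcup\mathcal{B}'$ in $\Aut(H)$ induces $\Stab_{\Aut(H)^{\mathcal{B}}}(\mathcal{B}')$; pointwise stabilizing $\bigcup\mathcal{B}'$ as a set of vertices is exactly what ``$\eta(x)=x$ for all $x\in\bigcup\mathcal{B}'$'' means, so this is the right reading.)

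Finally, set $\tau\coloneqq\psi\eta$. Then $\tau\in\Aut(H)$ and $\tau^{\mathcal{B}}=\psi^{\mathcal{B}}\eta^{\mathcal{B}}=\psi^{\mathcal{B}}\rho^{\mathcal{B}}=\psi^{\mathcal{B}}(\psi^{\mathcal{B}})^{-1}\varphi^{\mathcal{B}}=\varphi^{\mathcal{B}}=\psi^{\mathcal{B}}$ (the last equality by the hypothesis, after the reduction; before the reduction one simply checks $\tau^{\mathcal{B}}=\psi^{\mathcal{B}}$ holds because $\eta^{\mathcal{B}}=\psi^{\mathcal{B}-1}\varphi^{\mathcal{B}}$ and this equals the identity on $\mathcal{B}'$ while being free elsewhere — but it is cleaner to just verify $\tau^{\mathcal{B}}=\psi^{\mathcal{B}}$ directly from $\eta^{\mathcal{B}}=\rho^{\mathcal{B}}=(\psi^{\mathcal{B}})^{-1}\varphi^{\mathcal{B}}$, giving $\tau^{\mathcal{B}}=\varphi^{\mathcal{B}}$; and since $\varphi^{\mathcal{B}}$ and $\psi^{\mathcal{B}}$ agree on $\mathcal{B}'$ this matches the requirement on $\mathcal{B}'$, but we actually want $\tau^{\mathcal{B}}=\psi^{\mathcal{B}}$ on all of $\mathcal{B}$ — so the reduction step above, forcing agreement on $\mathcal{B}'$ to be trivial, is what lets us instead aim for $\rho^{\mathcal{B}}$ to be absorbed; I will restructure so that the target $\tau^{\mathcal{B}}=\psi^{\mathcal{B}}$ is obtained by choosing $\eta$ with $\eta^{\mathcal{B}}=\mathrm{id}$, which is legitimate precisely because in the reduced setting $\rho^{\mathcal{B}}$ stabilizes each block of $\mathcal{B}'$ and we need $\tau$ to agree with $\varphi$, not move blocks). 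For $x\in\bigcup\mathcal{B}'$ we get $\tau(x)=\psi(\eta(x))=\psi(\psi^{-1}\varphi(x))=\varphi(x)$, as desired. The main obstacle is bookkeeping: making the reduction to trivial action on $\mathcal{B}'$ precise so that ``$\eta$ fixes $\bigcup\mathcal{B}'$ pointwise'' and ``$\tau^{\mathcal{B}}=\psi^{\mathcal{B}}$'' are simultaneously achievable; everything else is a direct application of the easygoing identity and group-theoretic manipulation.
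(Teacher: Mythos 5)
Your central idea is the right one and is the same as the paper's: since $\psi^{\mathcal{B}}$ and $\varphi^{\mathcal{B}}$ agree on $\mathcal{B}'$, the block permutation $(\varphi^{-1}\circ\psi)^{\mathcal{B}}$ lies in $\Stab_{\Aut(H)^{\mathcal{B}}}(\mathcal{B}')$, so the easygoing identity supplies an automorphism $\eta\in\mathrm{pw}\Stab_{\Aut(H)}(\bigcup\mathcal{B}')$ inducing that block permutation, and $\tau$ is obtained by composing. However, your final assembly does not go through. First, with $\eta^{\mathcal{B}}=(\psi^{-1}\varphi)^{\mathcal{B}}$ and $\tau=\psi\eta$ you compute $\tau^{\mathcal{B}}=\varphi^{\mathcal{B}}$ and then assert $\varphi^{\mathcal{B}}=\psi^{\mathcal{B}}$ ``by the hypothesis''; but the hypothesis only gives agreement of these two block permutations \emph{on} $\mathcal{B}'$, not on all of $\mathcal{B}$, and your reduction step does nothing to change that. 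Second, and more seriously, you write $\tau(x)=\psi(\eta(x))=\psi(\psi^{-1}\varphi(x))=\varphi(x)$ for $x\in\bigcup\mathcal{B}'$, i.e.\ you substitute $\eta(x)=\psi^{-1}\varphi(x)$. This conflates ``$\eta$ induces the same permutation of $\mathcal{B}$ as $\psi^{-1}\varphi$'' with ``$\eta$ equals $\psi^{-1}\varphi$ on vertices''. By construction $\eta$ fixes every vertex of $\bigcup\mathcal{B}'$, so in fact $\tau(x)=\psi(x)$, not $\varphi(x)$. As written, your $\tau$ satisfies $\tau^{\mathcal{B}}=\varphi^{\mathcal{B}}$ and $\tau|_{\bigcup\mathcal{B}'}=\psi|_{\bigcup\mathcal{B}'}$, which is the lemma with the roles of $\psi$ and $\varphi$ interchanged; the proposed repair of taking $\eta^{\mathcal{B}}=\mathrm{id}$ instead would break the agreement with $\varphi$ on $\bigcup\mathcal{B}'$ for the same reason.

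The fix is a one-line reordering, and it recovers the paper's proof exactly: choose $\eta\in\mathrm{pw}\Stab_{\Aut(H)}(\bigcup\mathcal{B}')$ with $\eta^{\mathcal{B}}=(\varphi^{-1}\circ\psi)^{\mathcal{B}}$ (this element also stabilizes $\mathcal{B}'$, since it fixes each block of $\mathcal{B}'$) and set $\tau\coloneqq\varphi\circ\eta$. Then $\tau^{\mathcal{B}}=\varphi^{\mathcal{B}}\circ(\varphi^{\mathcal{B}})^{-1}\circ\psi^{\mathcal{B}}=\psi^{\mathcal{B}}$, and for $x\in\bigcup\mathcal{B}'$ one has $\tau(x)=\varphi(\eta(x))=\varphi(x)$ because $\eta$ fixes $x$. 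Note that the ``outer'' factor of the composition must be the automorphism whose vertex behaviour you want on $\bigcup\mathcal{B}'$, namely $\varphi$, while the block behaviour of $\psi$ is smuggled in through $\eta$. Your preliminary reduction to the case $\psi^{\mathcal{B}}|_{\mathcal{B}'}=\varphi^{\mathcal{B}}|_{\mathcal{B}'}=\mathrm{id}_{\mathcal{B}'}$ is unnecessary and is where the bookkeeping went astray; the direct argument needs no normalization.
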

	\begin{proof}
		If~$\psi$ and~$\varphi$ are as given, then~$(\varphi^{-1}\circ \psi)^{\mathcal{B}} \in \Stab_{\Aut(H)^\mathcal{B}}(\mathcal{B}') $.
		Since~$H$ is easygoing there is~$\rho \in \Aut(H)$ which is the identity on~$\bigcup \mathcal{B}'$ and satisfies~$\rho^{\mathcal{B}}= (\varphi^{-1}\circ \psi)^{\mathcal{B}}$. The map~$\tau\coloneqq \varphi\circ \rho$ is an automorphism of $H$ with the desired properties.
	\end{proof}
	
	\begin{theorem}[General extension theorem] \label{thm: general-extension}
		Let $G$ be a CCD on precisely two vertex color classes $R$ and $B$ such that $\ecol_G(B\times R) =[\ell]$. 
		The graph $G$ is ultrahomogeneous if and only if all of the following conditions are satisfied:
		\begin{enumerate}
			\item \label{itm: induced UH} Both graphs $G[R]$ and $G[B]$ are ultrahomogeneous.
			\item \label{itm: uh refined nbhds} 
			For $b \in B$ let $A(b)\coloneqq (P_1,\ldots,P_\ell)$ where~$P_i= \{r \in R \colon \ecol_G((b,r))=i \}$.
			The set $\mathcal{A} \coloneqq \{A(b)\colon b \in B\}$ is an ultrahomogeneous system of partitions of $G[R]$.
			\item \label{itm: block-system-B} For $A \in \mathcal{A}$ set $X(A) \coloneqq \{b \in B\colon A(b)=A\}$.
			The set $\mathcal{B}\coloneqq \{X(A)\colon A \in \mathcal{A} \}$ is a block system of $\Aut(G[B])$.
			\item \label{itm: block-group} $\Aut(G[B])^{\mathcal{B}}=\{\widehat{\varphi}\colon \varphi \in \Aut(G[R])^{\mathcal{A}} \}$ where $\widehat{\varphi}\colon \mathcal{B} \to \mathcal{B}$, $X(A) \mapsto X(\varphi(A))$ for all $A \in \mathcal{A}$.
			\item \label{itm: easygoing} $G[B]$ is easygoing with respect to~$\mathcal{B}$.
		\end{enumerate}
	\end{theorem}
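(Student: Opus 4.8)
We sketch the argument. The plan is to prove both directions by passing between partial isomorphisms of $G$ and those of the induced pieces $G[R]$ and $G[B]$, using two basic observations. First, every automorphism of $G$ fixes the vertex color classes $R$ and $B$ setwise, and for $\Psi\in\Aut(G)$ and $b\in B$ one has $\Psi(A(b))=A(\Psi(b))$; hence $\Psi$ restricts to $\Psi|_R\in\Aut(G[R])$ and $\Psi|_B\in\Aut(G[B])$, and $\Psi|_B$ induces on $\mathcal B$ exactly the permutation $\widehat{\Psi|_R}$. Second, since $G[R]$ and $G[B]$ are induced substructures, every isomorphism between induced substructures contained in $R$ (resp.\ in $B$) is already a partial isomorphism of $G$; so, when $G$ is ultrahomogeneous, $\Aut(G[R])$ and $\Aut(G[B])$ are exactly the restrictions $\Aut(G)^R$ and $\Aut(G)^B$. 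Throughout we argue with the edges directed from $B$ to $R$; those from $R$ to $B$ are handled symmetrically and we suppress them.

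For the forward direction, assume $G$ is ultrahomogeneous. Condition~1 is immediate from the second observation. For condition~2, note first that $\mathcal A$ is a single $\Aut(G[R])$-orbit, since a single-vertex map $b\mapsto b'$ extends to some $\Psi\in\Aut(G)$ and then $\Psi|_R(A(b))=A(b')$; and given a partial isomorphism $g$ of $(G[R],\vcol_G(A(b_1),\dots,A(b_m)))$, the map $g\cup\{b_j\mapsto b_j\}$ is a partial isomorphism of $G$ (the edges from $b_j$ to $\Dom(g)$ match precisely because $g$ respects the partition $A(b_j)$), whose extension restricts on $R$ to an automorphism of $G[R]$ fixing every $A(b_j)$. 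For condition~3, if $\sigma\in\Aut(G[B])$ and $A(b_1)=A(b_2)$, then $\sigma|_{\{b_1,b_2\}}$ is a partial isomorphism of $G$, so it extends to some $\Psi\in\Aut(G)$ and the identity $A(\sigma(b_i))=\Psi|_R(A(b_i))$ forces $\sigma(b_1),\sigma(b_2)$ into the same block; together with transitivity of $\Aut(G[B])$ this makes $\mathcal B$ a block system. Condition~4 follows by lifting an element of $\Aut(G[R])^{\mathcal A}$ (resp.\ $\Aut(G[B])^{\mathcal B}$) to $\Aut(G[R])$ (resp.\ $\Aut(G[B])$), extending it to an automorphism of $G$, and reading off the induced action on the other side. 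Condition~5 is the subtle one: one must realize every admissible block permutation that is trivial on $\mathcal B'$ by an automorphism of $G[B]$ that is the identity on $\bigcup\mathcal B'$, and—unlike for the first four conditions—this is not a one-line transport but requires combining ultrahomogeneity of $G$ with conditions~2--4.

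For the backward direction, assume conditions~1--5 and let $f=f_R\cup f_B$ be a partial isomorphism of $G$ with domains $D_R\subseteq R$ and $D_B\subseteq B$. I would extend $f$ in four steps. (i)~By condition~1 extend $f_B$ to $\varphi_B\in\Aut(G[B])$, inducing $\varphi_B^{\mathcal B}\in\Aut(G[B])^{\mathcal B}$ via condition~3. (ii)~By condition~4 choose $\varphi'\in\Aut(G[R])$ with $\widehat{\varphi'^{\mathcal A}}=\varphi_B^{\mathcal B}$, so that $\varphi'(A(b))=A(\varphi_B(b))$ for all $b\in B$; then $(\varphi')^{-1}\circ f_R$ is a partial isomorphism of $(G[R],\vcol_G((A(b))_{b\in D_B}))$ — this is precisely where the hypothesis that $f$ is a partial isomorphism of $G$ enters — so by condition~2 it extends to some $\widehat g\in\Aut(G[R])$ fixing every $A(b)$ with $b\in D_B$, and $\varphi_R:=\varphi'\circ\widehat g$ extends $f_R$ while still satisfying $\varphi_R(A(b))=A(f_B(b))$ for $b\in D_B$. (iii)~By condition~4, $\widehat{\varphi_R^{\mathcal A}}\in\Aut(G[B])^{\mathcal B}$; pick $\psi\in\Aut(G[B])$ realizing it. Since $\psi^{\mathcal B}$ and $\varphi_B^{\mathcal B}$ agree on $\mathcal B':=\{X(A(b))\colon b\in D_B\}$, Lemma~\ref{lem:application:of:easygoing} (using condition~5) yields $\tau\in\Aut(G[B])$ with $\tau^{\mathcal B}=\widehat{\varphi_R^{\mathcal A}}$ and $\tau=\varphi_B$ on $\bigcup\mathcal B'\supseteq D_B$, so $\tau$ extends $f_B$. (iv)~The pair $(\varphi_R,\tau)$ assembles to a bijection $\Phi$ of $V(G)$; it preserves vertex colors and the edges inside $R$ and inside $B$ by construction, and because $\tau^{\mathcal B}=\widehat{\varphi_R^{\mathcal A}}$ and $A\mapsto X(A)$ is injective on $\mathcal A$, one has $\varphi_R(A(b))=A(\tau(b))$ for every $b\in B$, which gives $\ecol_G((b,r))=\ecol_G((\tau(b),\varphi_R(r)))$ for all cross edges. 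Hence $\Phi\in\Aut(G)$ extends $f$.

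The main obstacle is the coordination carried out in steps~(ii) and~(iii): extending $f_R$ and $f_B$ independently fails, because the two extensions must simultaneously agree with $f$ and be compatible on the block system $\mathcal B$. Condition~2 is what lets one repair the $R$-side automorphism so that it extends $f_R$ without disturbing its action on the relevant partitions, and condition~5, via Lemma~\ref{lem:application:of:easygoing}, is what lets one repair the $B$-side automorphism so that its block action is globally $\widehat{\varphi_R^{\mathcal A}}$ while still agreeing with $f_B$ on $\bigcup\mathcal B'$; after that the cross edges fall into place automatically. The other technically demanding point is the forward verification of condition~5, which, in contrast with conditions~1--4, does not reduce to a direct transport of a partial isomorphism.
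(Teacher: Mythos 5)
Your backward direction and your verification of Conditions~1--4 in the forward direction follow essentially the same route as the paper (extend $f_B$, transport the block action to the $R$-side via Condition~4, repair on $R$ via Condition~2, transport back and repair on $B$ via easygoing); your write-up of the reassembly in steps (ii)--(iv) is in fact more explicit than the paper's. However, there is one genuine gap: you never prove the forward direction of Condition~5. You state what must be shown (``one must realize every admissible block permutation that is trivial on $\mathcal B'$ by an automorphism of $G[B]$ that is the identity on $\bigcup\mathcal B'$'') and assert that it ``requires combining ultrahomogeneity of $G$ with conditions~2--4,'' but no argument is given. Since the theorem is an equivalence with five necessary conditions, this leaves one of the five unestablished.

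The missing step is shorter than you anticipate, and contrary to your closing remark it \emph{is} a direct transport of a partial isomorphism --- just one whose domain mixes the two sides. Given $\psi\in\Stab_{\Aut(G[B])^{\mathcal B}}(\mathcal B')$, use Condition~4 (already proved at that point) to pick $\varphi\in\Aut(G[R])$ with $\widehat{\varphi^{\mathcal A}}=\psi$. Because $\psi$ fixes each block in $\mathcal B'$, the automorphism $\varphi$ fixes each ordered partition $A(b)$ with $b\in\bigcup\mathcal B'$, so the map that equals $\varphi$ on $R$ and the identity on $\bigcup\mathcal B'$ is a partial isomorphism of $G$ on the domain $R\cup\bigcup\mathcal B'$ (the cross-edge colors are preserved precisely because each $A(b)$ is preserved). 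Extending it by ultrahomogeneity of $G$ and restricting to $B$ yields an element of $\mathrm{pw}\Stab_{\Aut(G[B])}\bigl(\bigcup\mathcal B'\bigr)$ inducing $\psi$ on $\mathcal B$, which is exactly the nontrivial inclusion in the definition of easygoing. With this paragraph added, your proof is complete and coincides with the paper's.
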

	
	\begin{proof}
		($\Rightarrow$) Assume that $G$ is ultrahomogeneous.
		
		\medskip
		(\textit{Part~\ref{itm: induced UH}}): 
		Let $\varphi:U\to W$ be a partial isomorphism of $G[R]$.
		Since $G$ is ultrahomogeneous, there exists $\psi \in \Aut(G)$ such that $\psi$ extends $\varphi$. 
		By definition, every automorphism of $G$ preserves vertex color classes. 
		In particular, $\psi(R)=R$ and, hence,  $\psi|_{R}$ is an automorphism of $G[R]$ which extends $\varphi$. 
		Altogether $G[R]$ is ultrahomogeneous.
		Replacing the roles of $R$ and $B$ in this proof yields the analogous statement for $G[B]$.
		
		\medskip
		(\textit{Part~\ref{itm: uh refined nbhds}}):
		Let $b_1, \ldots, b_k \in B$ and let $\varphi$ be a partial isomorphism of $G[R]$ with the vertex coloring $\vcol_G(A(b_1), \ldots, A(b_k))$.
		Since $\varphi$ respects the coloring $\vcol_G(A(b_1), \ldots, A(b_k))$ we may extend $\varphi$ to a partial isomorphism $\varphi'$ on the domain $\Dom(\varphi) \cup \{b_1, b_2, \ldots, b_k\}$ such that $\varphi'$ is the identity on $\{b_1, b_2, \ldots, b_k\}$.
		Since~$G$ is ultrahomogeneous there exists $\psi$ in $\Aut(G)$ which extends $\varphi'$. 
		The map $\psi|_R$ is an automorphism of $G[R]$ which extends $\varphi$ and respects the coloring $\vcol_G(A(b_1), \ldots, A(b_k))$.
		Altogether $\mathcal{A}$ is an ultrahomogeneous system of partitions of $G[R]$.	
		
		\medskip 
		(\textit{Part~\ref{itm: block-system-B}}):
		Two vertices~$b$ and $b'$ in $B$ are in the same part of~$\mathcal{B}$ precisely if~$A(b)=A(b')$.
		If they are in the same part, then for every automorphism  $\varphi \in \Aut(G)$ we have that~$A(\varphi(b))=A(\varphi(b'))$, so their images are in the same part of~$\mathcal{B}$. Thus~$\mathcal{B}$ is a block system.
		
		\medskip 
		(\textit{Part~\ref{itm: block-group}}):
		Since automorphisms preserve edge colors every $\psi \in \Aut(G)$ satisfies that
		\[(\psi|_B)^{\mathcal{B}} = \widehat{(\psi|_R)^{\mathcal{A}} }.\]
		The statement follows since every automorphism of $G[R]$ (of $G[B]$) extends to an automorphism of $G$ since $G$ is ultrahomogeneous.

		\medskip
		(\textit{Part~\ref{itm: easygoing}}): 
		Fix~$\mathcal{B}'\subseteq \mathcal{B}$.
		We have
		$(\mathrm{pw}\Stab_
		{\Gamma} (\bigcup \mathcal{B}'))^\mathcal{B}\subseteq  \Stab_{\Gamma^\mathcal{B}}(\mathcal{B}')$ for arbitrary groups $\Gamma$.
		So we need to show the other inclusion for the group~$\Gamma\coloneqq \Aut(G[B])$.
		If~$\psi \in \Stab_{\Gamma^\mathcal{B}}(\mathcal{B}')$, then by Property~\ref{itm: block-group} there is a permutation~$\varphi \in \Aut(G[R])^{\mathcal{A}}$ with~$\widehat{\varphi}=\psi$. We extend~$\varphi$ to a partial isomorphism $\varphi'$ on~$R\cup \bigcup \mathcal{B}'$ which fixes all vertices in~$\bigcup \mathcal{B}'$. This is possible since~$\varphi$ leaves the ordered partitions~$A(b)$ with~$b\in \bigcup \mathcal{B}'$ invariant. Since~$G$ is ultrahomogeneous~$\varphi'$ extends to an automorphism of~$G$ that stabilizes the sets in~$\mathcal{B}'$ and induces~$\psi$ on~$\mathcal{B}$.

		\medskip
		($\Leftarrow$) Now assume that the Conditions~\eqref{itm: induced UH}--\eqref{itm: easygoing} are satisfied.
		Let $\varphi\colon U \to W$ be a partial isomorphism of $G$.
		
		If $U \cap B \neq \emptyset$, then by~\eqref{itm: induced UH} there exists an automorphism $\psi_B \in \Aut(G[B])$ which extends $\varphi|_{B}$.
		Condition~\eqref{itm: block-group} implies that there is an automorphism $\psi_R \in \Aut(G[R])$ such that $\widehat{\psi_R^{\mathcal{A}}} = \psi_B^{\mathcal{B}}$.
		Fix an ordering $b_1, b_2, \dots, b_{|B|}$ of $B$.
		Since $\varphi$ is a partial isomorphism of $G$ we have for all $r \in U \cap R$ that
		$$\vcol(A(b_1), A(b_2),\dots, A(b_{|B|}))(\varphi(r)) = \vcol(A(b_1),A(b_2), \dots, A(b_k))(\psi_R(r)).$$
		By Condition~\eqref{itm: uh refined nbhds} there exists an automorphism $\rho \in \Aut(G[R])$ which respects the coloring $\vcol(A(\varphi(b_1)), \dots, A(\varphi(b_{|B|})))$ and satisfies $\rho(\psi_R((r)) = \varphi(r)$ for all $r \in U\cap R$.
		Altogether the map $\psi$ with $\psi|_{R} = \rho \circ \psi_R$ and $\psi|_{B} = \psi_B$ is an automorphism of $G$ which extends $\varphi$ on~$A$ and induces~$\psi_B^{\mathcal{B}}$ on~$\mathcal{B}$. Since~$G[B]$ is easygoing (Condition~\eqref{itm: easygoing}) there is an automorphism~$\tau$ that is the identity on~$R$ and~$\psi_B\psi^{-1}$ on~$B$. Then~$\tau \psi$ is an automorphism that extends~$\varphi$.
		
		If $U \subseteq R$, then by Condition~\eqref{itm: induced UH} there exists $\rho \in \Aut(G[R])$ which extends~$\varphi$. According to Condition~\eqref{itm: block-group} there is an automorphism of $G$ which extends $\rho$.
	\end{proof}

	We now investigate the case when the partition~$A(b)$ is a block system of~$R$ for some (and thus for every)~$b\in B$. For this case we are interested in minimal extensions, that is, extensions in which the sets~$X(A)$ from the theorem contain only a single element~$b\in B$.

	\begin{theorem}[Minimal extension theorem]\label{thm:min-ext-thm}
		Let $G$ be a CCD on precisely one vertex color class $R$ and let~$A^{\star}$ be an ordered partition such that~$\mathcal{A}:=\mathcal{A}(A^{\star})$
		is an ultrahomogeneous system of partitions. 
		If, as an unordered partition,~$A^{\star}$ is a block system of~$\Aut(G[R])$,
		then the CCD~$\widehat{G}$ on vertex set~$ \{b_A \colon A \in \mathcal{A}\}\cup R$ with~$\widehat{G}[R]=G$ and with
		\begin{align*}
			\vcol_{\widehat{G}}(v) &= \text{red for all $v \in V(G)$},\\
			\vcol_{\widehat{G}}(b_A) &= \text{blue for all $A \in \mathcal{A}$},  \\
			\ecol_G((r,b_A)) &= i~\text{if $r \in \pi_i(A)$ for all $r \in R$ and $A \in \mathcal{A}$, and}\\
			\ecol_G((b_A, b_{A'})) &= \text{Iso-Type}((G,\vcol(A,A'))),
		\end{align*}
		where $\text{Iso-Type}((G,\vcol(A,A')))$ is the class of colored graphs that are isomorphic to~$(G,\vcol(A,A'))$, is ultrahomogeneous.
		In this case we call $G$ the  \emph{minimal ultrahomogeneous extension of $G[R]$ with respect to $\mathcal{A}$.}
	\end{theorem}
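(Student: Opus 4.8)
The plan is to show directly that every partial isomorphism of $\widehat{G}$ extends to an automorphism, mirroring the sufficiency direction of Theorem~\ref{thm: general-extension}; the key point is that the hypothesis on $A^{\star}$ makes the blue part $\widehat{G}[B]$ so rigid that almost all of the work reduces to identifying its automorphism group. First I would unwind the block-system hypothesis. Since automorphisms of $G[R]=G$ carry block systems to block systems and fix the one generated by $A^{\star}$, every $A\in\mathcal{A}=\mathcal{A}(A^{\star})$ has the \emph{same} underlying unordered partition $\mathcal{B}_0:=A^{\star}$, so the elements of $\mathcal{A}$ differ only in the order in which they list the $\ell$ blocks of $\mathcal{B}_0$. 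Writing $\Gamma_0:=\Aut(G)^{\mathcal{B}_0}\le\Sym(\ell)$ for the (faithful) induced action on the blocks, the map $\varphi(A^{\star})\mapsto\varphi^{\mathcal{B}_0}$ is a bijection $\mathcal{A}\to\Gamma_0$ under which $\Aut(G)$ acts on $\mathcal{A}$ as the left regular representation of $\Gamma_0$; in particular $\mathcal{A}$ is a single $\Aut(G)$-orbit.

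Next I would pin down $\widehat{G}[B]$. For $A,A'\in\mathcal{A}$ corresponding to $g,h\in\Gamma_0$, the block $P_m$ of $\mathcal{B}_0$ carries the colour $(g^{-1}(m),h^{-1}(m))$ in $(G,\vcol(A,A'))$, so the multiset of vertex colours of $(G,\vcol(A,A'))$ --- hence its iso-type --- faithfully records the relative position of $A$ and $A'$, i.e.\ the group element $g^{-1}h$; here faithfulness of $\Gamma_0$ is exactly what makes this record injective. Conversely the automorphism of $G$ inducing $g^{-1}$ on $\mathcal{B}_0$ witnesses $(G,\vcol(A,A'))\cong(G,\vcol(A^{\star},A''))$ with $A''$ indexed by $g^{-1}h$, so the iso-type depends \emph{only} on $g^{-1}h$. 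Thus $\widehat{G}[B]$ is isomorphic to the complete digraph on $\Gamma_0$ in which the arc $(g,h)$ receives the colour $g^{-1}h$, i.e.\ a Cayley digraph with pairwise distinct arc colours; by the classical rigidity of such digraphs --- compose any automorphism with a left translation so that the identity is fixed, and then the distinctly coloured out-arcs determine every other vertex --- I would conclude that $\Aut(\widehat{G}[B])$ is precisely the left regular representation of $\Gamma_0$, which is exactly $\{\widehat{\varphi}\colon\varphi\in\Aut(G)^{\mathcal{A}}\}$, and that $\widehat{G}[B]$ is ultrahomogeneous, since any colour-preserving bijection between two of its vertex subsets is the restriction of some left translation.

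With this in hand the extension step is short. Given a partial isomorphism $\varphi\colon U\to W$ of $\widehat{G}$, set $U_R=U\cap R$, $U_B=U\cap B$; as $\varphi$ respects vertex colours it restricts to a partial isomorphism of $G$ on $U_R$ and of $\widehat{G}[B]$ on $U_B$, and writing $\varphi(b_A)=b_{A'}$ the edge colours between $R$ and $B$ force $\varphi|_{U_R}$ to carry $\pi_i(A)\cap U_R$ into $\pi_i(A')$ for every $i$. Using Step~2 I would pick $\theta\in\Aut(G)$ whose induced permutation of $\mathcal{A}$ agrees with $A\mapsto A'$ on $\{A\colon b_A\in U_B\}$ (and $\theta=\mathrm{id}$ if $U_B=\emptyset$). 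Then $\theta^{-1}\circ\varphi|_{U_R}$ respects each of the ordered partitions $A_1,\dots,A_k$ enumerating $\{A\colon b_A\in U_B\}$, so it is a partial isomorphism of the CCD $(G,\vcol(A_1,\dots,A_k))$, which is ultrahomogeneous because $\mathcal{A}$ is an ultrahomogeneous system of partitions; extending it to $\rho\in\Aut(G)$ (which fixes each $A_j$), I set $\psi_R:=\theta\rho$ and $\psi_B\colon b_A\mapsto b_{\psi_R(A)}$, the latter lying in $\Aut(\widehat{G}[B])$ by Step~2. Finally I would check that $\psi:=\psi_R\cup\psi_B$ is an automorphism of $\widehat{G}$ --- the only nontrivial point is the colours of the edges between $R$ and $B$, which are preserved because $\pi_i(\psi_R(A))=\psi_R(\pi_i(A))$ --- and that $\psi$ extends $\varphi$: on $U_R$ because $\rho$ extends $\theta^{-1}\varphi|_{U_R}$, and on $U_B$ because $\psi_R(A_j)=\theta(A_j)=A_j'$. (When $U_R=\emptyset$ one takes $\rho=\mathrm{id}$, and when $U_B=\emptyset$ one extends $\varphi|_{U_R}$ by ultrahomogeneity of $G=G[R]$, which is part of the hypothesis; these degenerate cases are thus subsumed.)

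The step I expect to be the real obstacle is Step~2, the rigidity of $\widehat{G}[B]$: one must show it has no automorphism beyond the image of $\Aut(G)$, equivalently that the iso-type colouring is fine enough to recover the ``difference'' $g^{-1}h$ of any two blue vertices. This is precisely where the block-system hypothesis is indispensable --- it is what forces $\Gamma_0$ to act faithfully on the $\ell$ blocks of $\mathcal{B}_0$, so that the vertex-colour multiset of $(G,\vcol(A,A'))$ recovers the group element; without faithfulness the colour would only see a proper quotient and $\widehat{G}[B]$ could acquire extra symmetries, destroying the correspondence between $\Aut(\widehat{G}[B])$ and $\Aut(G)$. Steps~1 and~3, by contrast, are routine manipulations with block systems, the ultrahomogeneous-system property, and composition of automorphisms.
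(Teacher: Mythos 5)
Your proposal is correct and follows essentially the same route as the paper: the heart of both arguments is the identification of $\widehat{G}[B]$ with the Cayley colour digraph of the induced group $\Aut(G)^{\mathcal{A}}$ (your $\Gamma_0$) in which the arc from $g$ to $h$ is coloured by $g^{-1}h$, from which the rigidity and ultrahomogeneity of the blue part and the matching of the two induced actions all follow. The only difference is presentational: the paper verifies the five conditions of Theorem~\ref{thm: general-extension} and invokes it (the block-system and easygoing conditions being trivial since $\mathcal{B}$ is discrete), whereas your Step~3 re-derives the sufficiency direction of that theorem by hand in this special case.
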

	
	\begin{proof}
		Set $B\coloneqq \{b_A \colon A \in \mathcal{A}\}$.
		We use the general extension theorem and show that its five conditions are satisfied. Condition~2 is satisfied since~$A(b_A)=A$ by construction.
		Conditions~3  and~\ref{itm: easygoing} are satisfied since~$\mathcal{B}$ is a discrete partition.
		
		The interesting condition is Condition~1.
		Suppose~$\varphi:U\to W$ is a partial isomorphism of~$G[B]$. Let $t$ be the number of parts of~$A$.
		Since $A^{\star}$ is a block system for each two ordered partitions $A$ and $A'$ in $\mathcal{A}$ there exists $\tau_{A,A'}\in \Sym([t])$  with~$\tau_{A,A'}(i)=j$ if~$\pi_{i}(A)= \pi_j(A')$.

		Note that for four partitions~$A,A',\overline{A},\overline{A'}\in \mathcal{A}$ the graphs~$(G,\vcol(A,A'))$ and $(G,\vcol(\overline{A},\overline{A'}))$ are isomorphic exactly if~$\tau_{A,A'}=\tau_{\overline{A},\overline{A'}}$. This is the case exactly if the edges~$(b_A,b_{A'})$ and~$(b_{\overline{A}},b_{\overline{A'}})$ have the same color.
		Moreover, for all triples~$A$, $A'$, and $\overline{A}$ there is exactly one~$\overline{A'}$ so that~$\tau_{A,A'}= \tau_{\overline{A},\overline{A'}}$. 
		Choose~$b_A\in U$ and suppose~$\varphi(b_A)= b_{\overline{A}}$.
		
		We let~$\widehat{\varphi}$ be the map that sends~$b_{A'}$ to the vertex~$b_{\overline{A'}}$ with~$\tau_{A,A'}=\tau_{\overline{A},\overline{A'}}$.
		This map is an isomorphism of~$G[B]$. It is an extension of~$\varphi$ since the choice of~$\overline{A'}$ is unique. 
		
		Regarding Condition~4 
		note that by our previous observations in the construction~$G[B]$ is the Cayley graph of~$\Aut(G[R])^\mathcal{A}$ with respect to the generating set that contains all non-trivial elements.
		This means in particular that the group $\Aut(G[B])^{\mathcal{B}}$ is regular. Thus the order of $\Aut(G[B])^{\mathcal{B}}$ is no larger than the order of~$\Aut(G[R])^{\mathcal{A}}$. Conversely, each permutation of the parts of~$A$ induces a corresponding permutation of~$B$.
	\end{proof}

	\begin{example}
		Let~$G\cong\cfour$ and label its vertices $r_1$, $r_2$, $r_3$ and $r_4$ in a cyclic fashion.
		Let $A=(P_1,P_2):=(\{r_1,r_3\},\{r_2,r_4\})$.
		Then $\mathcal{A}(A)=\{A,A'\}$ with $A':=(P_2,P_1)$ is an ultrahomogeneous system of partitions. 
		Note that $A$ is a block system for $\Aut(G)$. 
		The CCD $\widehat{G}$ with $\widehat{G}[R]=G$ as in Theorem~\ref{thm:min-ext-thm} has the following properties. 
		First, $B=\{b_A,b_{A'}\}$.
		Second, for $i\in\{1,2\}$, $\ecol((r_i,b_A))=\ecol((r_{i+2},b_A))=i$ since $r_i,r_{i+2}\in P_i$. 
		It follows that $\ecol((r_i,b_{A'}))=1$ if $\ecol((r_i,b_A))=2$ and $\ecol((r_i,b_{A'}))=2$ if $\ecol((r_i,b_A))=1$ for $i\in [4]$.
		Lastly, $(G,\vcol(A,A'))$ assigns the color $(\text{red},1,2)$ to $r_1$ and $r_3$ and the color $(\text{red},2,1)$ to $r_2$ and $r_4$.
		So $\ecol(b_A,b_{A'})=\ecol(b_{A'}, b_A)$.
		Altogether we obtain that $\widehat{G}$ is indeed the minimal ultrahomogeneous extension of a red $\cfour$ with respect to $\mathcal{A}(A)$ (see Figure~\ref{fig: R_c4}, left). 
	\end{example}
	
	\section{Graph operations which preserve ultrahomogeneity}
	\label{sec: preserve uh}
	
	Two CCDs $G$ and $H$ are \emph{vertex-color disjoint} if $\vcol_G(V(G)) \cap \vcol_H(V(H)) = \emptyset$.
	
	\begin{definition}[Color disjoint union] 
		Let $G$ and $H$ be two vertex-color disjoint CCDs and $c$ a new edge color, i.e., $c \notin \ecol_G(E(G)) \cup \ecol_H(E(H))$.
		The \emph{color disjoint union} of $G$ and $H$, denoted~$G \mathbin{\square} G$, is a CCD with vertex set $V(G) \dot{\cup} V(H)$ such that the vertices in $G\mathbin{\square} H$ inherit their original vertex colors, and edge colors
		\begin{align*}
			\ecol_{G\mathbin{\square}H}((u,v))\coloneqq
			\begin{cases}
				\ecol_G ((u,v)) &\text{if $u,v\in G$},\\
				\ecol_{H} ((u,v)) &\text{if $u,v\in H$},\\
				c &\text{otherwise}. 
			\end{cases}
		\end{align*}
	\end{definition}
	\begin{lemma}\label{lem: color-disjoint union} The color disjoint union of two CCDs $G$ and $H$ is ultrahomogeneous if and only if $G$ and $H$ are ultrahomogeneous.
	\end{lemma}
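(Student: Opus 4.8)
The plan is to prove both directions directly from the definition of ultrahomogeneity, using the fact that every partial isomorphism of a color disjoint union splits canonically into a partial isomorphism of $G$ and one of $H$.

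\medskip

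For the forward direction, I would observe that $G$ (and likewise $H$) embeds into $K \coloneqq G \mathbin{\square} H$ as an induced subgraph, and moreover that the vertex set $V(G)$ is a union of vertex color classes of $K$, since $G$ and $H$ are vertex-color disjoint. Hence any partial isomorphism of $G$ is in particular a partial isomorphism of $K$, and by ultrahomogeneity of $K$ it extends to some $\psi \in \Aut(K)$. Because automorphisms preserve vertex colors and $V(G)$ is a union of color classes, $\psi$ must satisfy $\psi(V(G)) = V(G)$, so $\psi|_{V(G)} \in \Aut(G)$ is the desired extension. This shows $G$ is ultrahomogeneous; the argument for $H$ is symmetric.

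\medskip

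For the converse, suppose $G$ and $H$ are ultrahomogeneous and let $\varphi\colon U \to W$ be a partial isomorphism of $K$. Since $\varphi$ preserves vertex colors and $V(G)$, $V(H)$ are unions of (distinct) color classes, $\varphi$ maps $U \cap V(G)$ bijectively onto $W \cap V(G)$ and $U \cap V(H)$ onto $W \cap V(H)$. Moreover, on $G$-$G$ and $H$-$H$ pairs $\varphi$ respects $\ecol_G$ and $\ecol_H$ respectively (by the definition of $\ecol_K$ and the fact that $c$ is a fresh color not used inside $G$ or $H$), so $\varphi|_{U \cap V(G)}$ is a partial isomorphism of $G$ and $\varphi|_{U \cap V(H)}$ is a partial isomorphism of $H$. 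By ultrahomogeneity, extend these to $\psi_G \in \Aut(G)$ and $\psi_H \in \Aut(H)$ respectively, and let $\psi \coloneqq \psi_G \dunion \psi_H$ be the permutation of $V(K) = V(G) \dunion V(H)$ acting as $\psi_G$ on $V(G)$ and $\psi_H$ on $V(H)$. I claim $\psi \in \Aut(K)$: it preserves vertex colors coordinatewise; on $G$-$G$ pairs it preserves $\ecol_G = \ecol_K$; on $H$-$H$ pairs it preserves $\ecol_H = \ecol_K$; and on mixed pairs $(u,v)$ with $u \in V(G)$, $v \in V(H)$ (or vice versa) both $(u,v)$ and $(\psi(u),\psi(v))$ have edge color $c$ since $\psi$ preserves the partition into $V(G)$ and $V(H)$. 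Thus $\psi$ extends $\varphi$, and $K$ is ultrahomogeneous.

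\medskip

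I expect no serious obstacle here; the only point requiring a little care is the bookkeeping that $c$ being a genuinely new color guarantees $\varphi$ restricts to partial isomorphisms of the summands (rather than, say, an edge of $G$ being confused with a cross edge), and that the glued map $\psi$ respects the cross edges precisely because it preserves the bipartition $\{V(G), V(H)\}$ — which in turn follows from color preservation and vertex-color disjointness.
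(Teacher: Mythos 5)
Your argument is correct and matches the paper's proof in substance: both directions rest on the same two observations, namely that vertex-color disjointness forces any partial isomorphism of $G \mathbin{\square} H$ to split into partial isomorphisms of $G$ and of $H$, and that the homogeneous (constant color $c$) connection between $V(G)$ and $V(H)$ lets the two extensions glue back into an automorphism. You have simply written out the bookkeeping that the paper leaves implicit; no changes are needed.
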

	\begin{proof}
		Isomorphisms preserve colors and the CCDs $G$ and $H$ are vertex-color disjoint.
		Hence, every partial isomorphism of $G\mathbin{\square} H$ induces a partial isomorphism of $G$ and a partial isomorphism of $H$.
		 Conversely, two partial isomorphisms, one of $G$ and one of $H$, can be combined to a partial isomorphism of $G\mathbin{\square} H$ since $G$ and $H$ are vertex-color disjoint and $V(G)$ and $V(H)$ are homogeneously connected in $G\mathbin{\square} H$.
	\end{proof}

	If $H$ is a CCD and $f\colon \ecol_H(E(H)) \to S$ is a bijection, then the CCD $G$ with $V(G) = V(H)$, $\vcol_G = \vcol_H$, and~$\ecol_G = f(\ecol_H)$ is an \emph{edge color change} of $H$.
	A \emph{vertex color change} is defined analogously using~$f(\chi_H)$ for some bijection $f\colon \chi_H(V(H)) \to S$.
	
	A CCD $G$ is a \emph{bichromatic symmetrization} of a CCD~$H$
	if
	$V(G) = V(H)$, $\vcol_G = \vcol_H$, and there exist
	two distinct edge colors~$c$ and $d$ in $\ecol_H(E(H))$ and two distinct vertex colors~$r$ and $b$ in $\chi_H(V(H))$ such that
		\[\ecol_G((u,v))= 
	\begin{cases}
		\ecol_H((v,u)) & \text{if } \ecol_H((v,u))=c, \ecol_G((u,v)) = d,\\& \phantom{if } \chi_H(u) = b, \text{ and }  \chi_H(v)=r ,\\
		\ecol_H ((u,v)) &\text{otherwise}.
	\end{cases}\]
	If $G$ is a bichromatic symmetrization of $H$, then $H$ is called an \emph{inverse bichromatic symmetrization} of $G$.

	\begin{definition}[Equivalence up to color changes and bichromatic symmetrization]\label{def:equivalence} 
		We call two CCDs $G$ and~$H$ \emph{equivalent up to color changes
			and bichromatic symmetrization}, denoted~$G\sim_c H$, if~$G$ can be obtained from~$H$ by a sequence of vertex color changes,
		edge color changes,
		bichromatic symmetrizations, or inverse bichromatic symmetrizations.
	\end{definition}
	
	We also use terms such as \emph{equivalent up to vertex color changes}, \emph{equivalent up to edge color changes}, and
	\emph{equivalent up to (inverse) bichromatic symmetrization}.
	\begin{example}[Bichromatic symmetrization of an oriented graph]
		Consider the left graph of Figure~\ref{fig: symmetrizations}.
		In order to apply a sequence of bichromatic symmetrizations and inverse bichromatic symmetrizations, we translate this graph into a CCD by inserting all possible directed edges and labelling added edges with a new color (orange, see second graph from the left).
		Now, we apply bichromatic symmetrization to obtain the graph in the middle.
		An inverse bichromatic symmetrization yields the second graph from the right.
		A translation back to the setting of oriented graphs finally results into the graph on the right.
	\end{example}
	
	\begin{figure}
		\centering
		\begin{tikzpicture}[scale=.6]
			\begin{scope}[shift={(0,0)}]
				\node[xsvertex, WildStrawberry] (r1) at (-1,-0.75) {};
				\node[xsvertex, WildStrawberry] (r2) at (-1,0.75) {};
				\node[xsvertex, MidnightBlue] (b1) at (1,-0.75) {};
				\node[xsvertex, MidnightBlue] (b2) at (1,0.75) {};
				\draw[harc] (r1)edge(b1) (r2)edge(b2);
			\end{scope}
			
			\begin{scope}[shift={(3.7,0)}]
				\node[xsvertex, WildStrawberry] (r1) at (-1,-0.75) {};
				\node[xsvertex, WildStrawberry] (r2) at (-1,0.75) {};
				\node[xsvertex, MidnightBlue] (b1) at (1,-0.75) {};
				\node[xsvertex, MidnightBlue] (b2) at (1,0.75) {};
				\draw[harc, bend left=10] (r1)edge(b1) (r2)edge(b2);
				\draw[harc, bend left=10, Melon] (b1)edge(r1) (b2)edge(r2);
				\draw[harc, bend left=10, Melon] (r1)edge(r2) (b1)edge(b2) (r2)edge(r1) (b2)edge(b1);
				\draw[harc, bend left=10, Melon] (r1)edge(b2) (r2)edge(b1) (b1)edge(r2)  (b2)edge(r1);
			\end{scope}
			
			\begin{scope}[shift={(7.4,0)}]
				\node[xsvertex, WildStrawberry] (r1) at (-1,-0.75) {};
				\node[xsvertex, WildStrawberry] (r2) at (-1,0.75) {};
				\node[xsvertex, MidnightBlue] (b1) at (1,-0.75) {};
				\node[xsvertex, MidnightBlue] (b2) at (1,0.75) {};
				\draw[harc, bend left=10] (r1)edge(b1) (r2)edge(b2);
				\draw[harc, bend left=10] (b1)edge(r1) (b2)edge(r2);
				\draw[harc, bend left=10, Melon] (r1)edge(r2) (b1)edge(b2) (r2)edge(r1) (b2)edge(b1);
				\draw[harc, bend left=10, Melon] (r1)edge(b2) (r2)edge(b1) (b1)edge(r2)  (b2)edge(r1);
			\end{scope}
			
			\begin{scope}[shift={(11.1,0)}]
				\node[xsvertex, WildStrawberry] (r1) at (-1,-0.75) {};
				\node[xsvertex, WildStrawberry] (r2) at (-1,0.75) {};
				\node[xsvertex, MidnightBlue] (b1) at (1,-0.75) {};
				\node[xsvertex, MidnightBlue] (b2) at (1,0.75) {};
				\draw[harc, bend left=10] (b1)edge(r1) (b2)edge(r2);
				\draw[harc, bend left=10, Melon] (r1)edge(b1) (r2)edge(b2);
				\draw[harc, bend left=10, Melon] (r1)edge(r2) (b1)edge(b2) (r2)edge(r1) (b2)edge(b1);
				\draw[harc, bend left=10, Melon] (r1)edge(b2) (r2)edge(b1) (b1)edge(r2)  (b2)edge(r1);
			\end{scope}

			\begin{scope}[shift={(14.8,0)}]
				\node[xsvertex, WildStrawberry] (r1) at (-1,-0.75) {};
				\node[xsvertex, WildStrawberry] (r2) at (-1,0.75) {};
				\node[xsvertex, MidnightBlue] (b1) at (1,-0.75) {};
				\node[xsvertex, MidnightBlue] (b2) at (1,0.75) {};
				\draw[harc] (b1)edge(r1) (b2)edge(r2);
			\end{scope}
		\end{tikzpicture}
		
		\caption{Reflecting the arcs matching a red $E_2$ to a blue $E_2$ via bichromatic symmetrizations and inverse bichromatic symmetrizations.}
		\label{fig: symmetrizations}
	\end{figure}
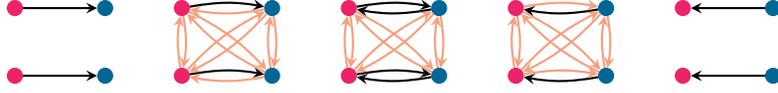

	\begin{lemma}
		If~$G$ and~$H$ are equivalent up to color changes
		and bichromatic
		symmetrization, then both graphs are ultrahomogeneous or neither of them is.
	\end{lemma}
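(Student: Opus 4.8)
The plan is to exploit that, by definition, $\sim_c$ is the equivalence relation generated by vertex colour changes, edge colour changes, bichromatic symmetrizations and inverse bichromatic symmetrizations. It therefore suffices to prove that applying a single such operation to a CCD neither creates nor destroys ultrahomogeneity; and since an inverse bichromatic symmetrization is exactly the converse of a bichromatic symmetrization, only two genuinely different cases remain: colour changes and bichromatic symmetrization.

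Colour changes are immediate. Such an operation replaces $\chi$ (respectively $\ecol$) by its composition with a bijection $f$ of the relevant colour set. Because $f$ is injective, two vertices (two edges) receive equal colours after the change exactly when they did before, so a vertex map $U\to W$ is a partial isomorphism of the original graph if and only if it is one of the recoloured graph, and the two automorphism groups coincide; hence ultrahomogeneity holds for one if and only if it holds for the other.

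The real work is bichromatic symmetrization, which I would organise around Theorem~\ref{thm: general-extension}. Let $G$ be the bichromatic symmetrization of $H$ with respect to edge colours $c\neq d$ and vertex colours $r\neq b$, and let $R,B$ be the vertex sets of colours $r,b$. Call a pair $(u,v)$ with $u\in B$, $v\in R$ \emph{special} if $\ecol_H((u,v))=d$ and $\ecol_H((v,u))=c$. The operation changes $\ecol$ only on special pairs, where it merges $d$ into $c$; in particular $G[R]=H[R]$, $G[B]=H[B]$, and every arc directed from $R$ to $B$ is unchanged. Two observations are then immediate: the set of special pairs is $\Aut(H)$-invariant, so $\Aut(H)\leq\Aut(G)$, and every partial isomorphism of $H$ maps special pairs within its domain to special pairs, hence is also a partial isomorphism of $G$. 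To show ultrahomogeneity transfers in both directions, I would take a partial isomorphism of whichever of $G$, $H$ is assumed ultrahomogeneous, extend its restrictions to $R$ and to $B$ separately using the inherited ultrahomogeneity of $G[R]=H[R]$ and $G[B]=H[B]$, and then reconcile these two extensions with the bipartite structure between $R$ and $B$ — which is precisely what the block-system conditions of Theorem~\ref{thm: general-extension} and Lemma~\ref{lem:application:of:easygoing} are built for. The additional point, compared with the plain extension theorem, is that passing from $H$ to $G$ merges, uniformly over all $b\in B$, the part ``$\ecol((b,\cdot))=d$ along a special arc'' of the neighbourhood partition $A(b)$ into its part ``$\ecol((b,\cdot))=c$''; one has to follow how the partition system $\mathcal A$, the block system on $B$, and the corresponding induced permutation group transform under this merge, and conclude that Conditions~\ref{itm: uh refined nbhds}--\ref{itm: block-group} transfer between $G$ and $H$ (Conditions~\ref{itm: induced UH} and~\ref{itm: easygoing} being unaffected since $G[R]=H[R]$ and $G[B]=H[B]$).

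I expect the crux to be Condition~\ref{itm: uh refined nbhds}: that the merged system $\mathcal A$ is still an ultrahomogeneous system of partitions of $G[R]$ — equivalently, in hands-on terms, that when assembling the automorphism one can always realign which of the ambiguous arcs from $B$ to $R$ (those coloured $c$ in $G$) play the role of the symmetrized arc, i.e.\ that ``being a special arc'' is, in an ultrahomogeneous structure, as homogeneous as every other local feature. Since merging colours can in general break ultrahomogeneity, the argument must use that the merge here is canonical: the part of $A(b)$ being absorbed is cut out by the colour $d$ of the special arc together with the unchanged colour $c$ of its reverse arc, and is hence invariant under $\Aut(H[R])$. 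I would complete the proof by checking directly that the refined vertex colourings of the ultrahomogeneous graph $H[R]=G[R]$ that arise this way remain ultrahomogeneous under the merge, and that the dual coarsening of the block system on $B$ together with its induced group still satisfies Conditions~\ref{itm: block-system-B} and~\ref{itm: block-group}.
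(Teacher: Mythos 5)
Your handling of colour changes and of the easy direction of bichromatic symmetrization (every partial isomorphism of $H$ is one of $G$, and $\Aut(H)\leq\Aut(G)$) is correct and agrees with the paper. The genuine gap is the converse direction, which you leave as a plan (``I would complete the proof by checking directly that\ldots''), and the plan misses the one observation that makes the lemma trivial: the two structures are \emph{interdefinable}. For an ordered pair $(u,v)$, the data $\vcol(u)$, $\vcol(v)$, $\ecol_G((u,v))$, $\ecol_G((v,u))$ determine $\ecol_H((u,v))$ and $\ecol_H((v,u))$, and conversely --- a pair is special precisely when $u$ has colour $b$, $v$ has colour $r$ and, in $G$, both arcs carry colour $c$ (in the oriented setting, where $c$ is the edge colour and $d$ the non-edge colour, this recovery is unambiguous). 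Consequently a bijection between vertex subsets preserves the colouring of $G$ if and only if it preserves that of $H$; the partial isomorphisms and the automorphism groups of the two graphs literally coincide, and ultrahomogeneity transfers in both directions at once. This is the paper's entire proof. Your worry that ``merging colours can in general break ultrahomogeneity'' is dispelled by exactly this local invertibility: no new partial isomorphisms appear on the $G$-side, so there is no realignment of ``ambiguous arcs'' to perform.

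Beyond being incomplete, organising the converse around Theorem~\ref{thm: general-extension} does not work as stated: that theorem applies only to CCDs with \emph{precisely two} vertex colour classes, whereas the lemma concerns arbitrary CCDs in which the symmetrization touches just two of possibly many classes, so you would first need an additional reduction before conditions \ref{itm: induced UH}--\ref{itm: easygoing} are even defined. And even in the genuinely bichromatic case you would be re-deriving a partition system, block system and induced group that, by the interdefinability observation, are identical for $G$ and $H$ --- so the machinery verifies nothing that the two-line remark does not already give. Replace the second half of your argument by that remark.
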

	\begin{proof}
		Observe that the partial isomorphisms of $G$ correspond exactly to the partial isomorphisms of $H$ and, in particular, $\mathrm{Aut}(G)=\mathrm{Aut}(H)$. 
	\end{proof}

	\begin{lemma}\label{lemma: blocks-are-UH-subgraphs}
		Suppose $G$ is an ultrahomogeneous CCD and $\mathcal{B}$ is a block system of $\Aut(G)$. Then $G[X]$ is ultrahomogeneous for every block $X \in \mathcal{B}$. 
	\end{lemma}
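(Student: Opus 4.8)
The plan is to take an arbitrary partial isomorphism of $G[X]$, lift it to an automorphism of the whole graph $G$ via ultrahomogeneity of $G$, and then observe that this lifted automorphism is forced to stabilize the block $X$ setwise, so that restricting it to $X$ yields the automorphism of $G[X]$ we want.

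Concretely, I would fix a block $X \in \mathcal{B}$ and a partial isomorphism $\varphi\colon U \to W$ of $G[X]$, so $U, W \subseteq X$. If $U = \emptyset$ there is nothing to do, since the identity on $X$ extends $\varphi$; so I assume $U \neq \emptyset$ and pick $u \in U$. Because the vertex coloring and edge coloring of $G[X]$ are exactly the restrictions of those of $G$, the map $\varphi$ is also a partial isomorphism of $G$ itself. Since $G$ is ultrahomogeneous, there is $\psi \in \Aut(G)$ with $\psi|_U = \varphi$. Now $u \in X$ and $\psi(u) = \varphi(u) \in W \subseteq X$, so $\psi(X) \cap X$ is nonempty. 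As $X$ is a block of the transitive group $\Aut(G)$, this forces $\psi(X) = X$. Hence $\psi|_X\colon X \to X$ is a bijection preserving $\chi_G|_X$ and $\zeta_G|_{X \times X}$, i.e., an automorphism of $G[X]$, and it extends $\varphi$ because $\psi$ does and $U \subseteq X$. This shows every partial isomorphism of $G[X]$ extends to an automorphism of $G[X]$, so $G[X]$ is ultrahomogeneous.

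There is essentially no serious obstacle here; the argument is just a short application of the block axiom. The only points worth stating explicitly are that a partial isomorphism of the induced subgraph $G[X]$ is genuinely a partial isomorphism of the ambient CCD $G$ (so that ultrahomogeneity of $G$ can be invoked), and that any automorphism of $G$ sending one element of a block into the same block must map the block onto itself — both immediate from the definitions of induced subgraph and of a block.
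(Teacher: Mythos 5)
Your proof is correct and follows essentially the same route as the paper: extend the partial isomorphism of $G[X]$ to an automorphism of $G$ by ultrahomogeneity, use the block property to conclude the automorphism stabilizes $X$ setwise, and restrict. Your version just spells out the nonempty-intersection step and the trivial empty-domain case a bit more explicitly.
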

	\begin{proof}
		Let $X$ be a block of an ultrahomogeneous CCD $G$ and let $\varphi:U\to U'$ be an isomorphism of induced subgraphs of $G[X]$.
		Since $G$ is ultrahomogeneous there exists $\varphi' \in \Aut(G)$ which extends $\varphi$.
		Since $X$ is a block and both sets~$U$ and~$U'$ are subsets of $X$ we obtain that $\varphi'(X) = X$.
		This implies that $\varphi'|_{X} \in \Aut(G[X])$ extends $\varphi$. 
	\end{proof}

	\begin{definition}\label{definition:blow:up}
		Let $R$ be a vertex color class of a CCD $G$, let~$H$ be a monochromatic CCD with a block system $\mathcal{B}$, and let~$\tau$ from~$\Aut(H)^\mathcal{B}$ to~$\Aut(G[R])$ be a permutational isomorphism. 
		Set
		\begin{align*}
			&\widehat{\tau}\colon (V(G)\setminus R) \dunion V(H)  \to V(G), \\
			&v \mapsto \begin{cases}
						\tau(B) &\text{if $v \in V(H)$ and $v \in B \in \mathcal{B}$}, \\
						v &\text{if } v \in V(G)\setminus R.
					  \end{cases}
		\end{align*}
		We define the \emph{blow-up~$G[H\rightarrow_{\tau} R]$ of~$R$ in~$G$ by~$H$ via~$\tau$} to be the CCD on vertex set~$(V(G)\setminus R)\dunion V(H)$ with colorings defined as follows. 
		\begin{enumerate}
			\item The vertex~$v$ has color~$\chi_G(\widehat{\tau}(v))$.
			\item The edge~$(v,v')$ has edge color
			${\ecol_H}{((v,v'))}$ if~$v,v'\in V(H)$ and edge color ${\ecol_G}{((\widehat{\tau}(v),(\widehat{\tau}(v'))))}$ otherwise.
		\end{enumerate}
		The blow-up is \emph{easygoing} if~$H$ is easygoing with respect to~$\mathcal{B}$.
	\end{definition}
	
	Abusing terminology, we sometimes also talk about a \emph{blow-up of~$R$ by~$X$} rather than a blow-up of~$R$ to~$H$ in case each graph induced by a block of~$H$ is isomorphic to~$X$. 
	Note, however, that our notion of blow-up is more general than similar notions of homogeneous blow-up defined~\cite{HeinrichSchneiderSchweitzer2020,truss:homogeneous-coloured-multipartite-graphs}. In particular the blocks of the graph~$H$ do not have to be homogeneously connected.

	\begin{lemma}\label{lem:blow:ups}
		Let $R$ be a color class of a CCD $G$ and let $H_1$ and $H_2$ be two ultrahomogeneous CCDs.
		If $Y_1\coloneqq  G[H_1\rightarrow_{\tau_1} R]$ and~$Y_2\coloneqq G[H_2\rightarrow_{\tau_2} R]$ are easygoing blow-ups, 
		then $Y_1$ is ultrahomogeneous if and only if $Y_2$ is ultrahomogeneous. 
		
		In particular, if~$G[R]$ is ultrahomogeneous, then~$Y_2$ is ultrahomogeneous if and only if~$G$ is ultrahomogeneous.
		
	\end{lemma}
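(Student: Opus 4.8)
The plan is to reduce the ultrahomogeneity of an easygoing blow-up $Y \coloneqq G[H \rightarrow_{\tau} R]$ with $H$ ultrahomogeneous to a single condition $(\star)$ that mentions only $G$, the color class $R$, and $W \coloneqq V(G)\setminus R$, but neither $H$ nor $\tau$; then $Y_1$ and $Y_2$ satisfy the same condition, and the ``in particular'' will fall out because, when $G[R]$ is ultrahomogeneous, $(\star)$ amounts to saying that $G$ is ultrahomogeneous. First I would record the structure of $Y$: writing $\mathcal{B}$ for the block system of $H$, the map $\tau$ is a bijection $\mathcal{B}\to R$; the set $V(H)$ is one vertex color class of $Y$ and $W$ is the union of the others, so every automorphism of $Y$ fixes $V(H)$ and $W$ setwise, $Y[V(H)]=H$, $Y[W]=G[W]$, and for $v$ in a block $B\in\mathcal{B}$ and $w\in W$ we have $\ecol_Y((v,w))=\ecol_G((\tau(B),w))$ and $\ecol_Y((w,v))=\ecol_G((w,\tau(B)))$, so cross edges depend on $v$ only through its block. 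I would then call $(\alpha,\beta)\in\Aut(G[R])\times\Aut(G[W])$ \emph{$G$-admissible} if $\alpha\dunion\beta$ preserves all edges of $G$ between $R$ and $W$; since $\alpha,\beta$ already preserve colors and the edges inside $R$ and inside $W$, this holds exactly when $\alpha\dunion\beta\in\Aut(G)$, i.e.\ exactly when $(\alpha,\beta)=(\psi|_R,\psi|_W)$ for some $\psi\in\Aut(G)$.

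Next I would analyse partial isomorphisms of $Y$. As $H$ is ultrahomogeneous, every partial isomorphism of $H$ extends to an automorphism and therefore respects $\mathcal{B}$, so it induces a well-defined partial bijection between subsets of $\mathcal{B}$; transporting through $\tau$ turns this into a partial bijection $\nu$ between subsets of $R$ which, again by ultrahomogeneity of $H$, is the restriction of an element of $\Aut(G[R])$. Using the block-dependence of cross edges, a partial isomorphism of $Y$ is exactly a pair $(\varphi_H,\varphi_W)$ with $\varphi_H$ a partial isomorphism of $H$ and $\varphi_W$ a partial isomorphism of $G[W]$ that are \emph{compatible}, meaning $\nu\dunion\varphi_W$ preserves the edges of $G$ between $R$ and $W$ on its domain; conversely, any partial isomorphism $\varphi_W$ of $G[W]$ together with a partial bijection $\nu$ between subsets of $R$ that restricts an element of $\Aut(G[R])$ and is compatible with $\varphi_W$ is realized by choosing one vertex from each block touched by $\nu$ and restricting a suitable automorphism of $H$.

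The hard part will be the following extension criterion: a compatible pair $(\varphi_H,\varphi_W)$ extends to an automorphism of $Y$ if and only if there is a $G$-admissible pair $(h,\beta)$ with $h$ extending $\nu$ and $\beta$ extending $\varphi_W$. The forward direction just restricts an extending automorphism of $Y$. For the converse, given such $(h,\beta)$, I would apply the easygoing hypothesis on $H$ through Lemma~\ref{lem:application:of:easygoing}: as $\tau^{-1}(h)\in\Aut(H)^{\mathcal{B}}$ already agrees with the block action of $\varphi_H$ on the blocks met by the domain of $\varphi_H$, there is $\sigma\in\Aut(H)$ extending $\varphi_H$ with $\sigma^{\mathcal{B}}=\tau^{-1}(h)$; then $\sigma\dunion\beta$ preserves the $H$-edges, the $G[W]$-edges, and --- because $(\tau(\sigma^{\mathcal{B}}),\beta)=(h,\beta)$ is $G$-admissible and cross edges depend only on blocks --- all edges between $V(H)$ and $W$, so $\sigma\dunion\beta\in\Aut(Y)$ and it extends $(\varphi_H,\varphi_W)$. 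This is the only point where the interaction between the internal structure of $H$ and its connections to $W$ has to be controlled; since the blocks of $H$ need not be homogeneously connected, it is exactly the easygoing property (via Lemma~\ref{lem:application:of:easygoing}) that lets one freely prescribe the block action of the extension, which is what makes the step work. The overall structure mirrors the proof of Theorem~\ref{thm: general-extension}.

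Combining the last two paragraphs, $Y$ is ultrahomogeneous if and only if $(\star)$ holds: \emph{every partial isomorphism $\varphi$ of $G$ whose restriction $\varphi|_R$ extends to an automorphism of $G[R]$ also extends to an automorphism of $G$}. Indeed, a compatible pair $(\varphi_H,\varphi_W)$ corresponds to the partial isomorphism $\varphi\coloneqq\nu\dunion\varphi_W$ of $G$, whose restriction to $R$ is $\nu$ and hence extends in $G[R]$, and by the extension criterion $(\varphi_H,\varphi_W)$ extends in $Y$ if and only if $\varphi$ extends in $G$; and every partial isomorphism $\varphi$ of $G$ with $\varphi|_R$ extending in $G[R]$ arises this way. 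Since $(\star)$ involves neither $H$ nor $\tau$, $Y_1$ is ultrahomogeneous if and only if $Y_2$ is. Finally, if $G[R]$ is ultrahomogeneous, then every partial isomorphism of $G[R]$ extends to an automorphism of $G[R]$, so the hypothesis ``$\varphi|_R$ extends to an automorphism of $G[R]$'' in $(\star)$ is automatic; hence $(\star)$ just states that every partial isomorphism of $G$ extends to an automorphism of $G$, i.e.\ that $G$ is ultrahomogeneous, which gives the ``in particular'' statement.
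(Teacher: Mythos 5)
Your proposal is correct, and it rests on exactly the same ingredients as the paper's proof: cross edges of a blow-up depend on a vertex of $H$ only through its block, ultrahomogeneity of $H$ forces partial isomorphisms to respect the block system and hence to induce partial permutations of $R$ that extend in $\Aut(G[R])$ via the permutational isomorphism $\tau$, and the easygoing property (through Lemma~\ref{lem:application:of:easygoing}) is what lets you prescribe the block action of an extension while matching a given partial isomorphism inside the blocks. The difference is organizational: the paper proves a transport claim that carries a partial isomorphism of $Y_i$ directly to one of $Y_j$ with the same $W$-part and the same induced partial permutation of $R$, then argues $Y_2$ ultrahomogeneous from $Y_1$ ultrahomogeneous by going over to $Y_1$, extending there, coming back, and repairing with easygoingness; you instead make the shared invariant $(\nu,\varphi_W)$ explicit and compress it into the $H$-free criterion $(\star)$ on $G$ alone. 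Your packaging buys two things: the symmetry between $Y_1$ and $Y_2$ is immediate rather than argued by a round trip, and the ``in particular'' clause falls out of $(\star)$ at once, whereas in the paper it is left implicit (it follows by viewing $G$ itself as the easygoing blow-up of $R$ by $G[R]$ with the discrete block system). The cost is that you must verify the bidirectional extension criterion carefully, including the converse direction where Lemma~\ref{lem:application:of:easygoing} is applied to an automorphism extending $\varphi_H$; you do this correctly.
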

	\begin{proof}
		Fix $i$ and $j$  in $\{1,2\}$.	
		For $M \subseteq V(Y_i)$ 
		set~$\overline{M}^j\coloneqq  \widehat{\tau}_j^{-1}(\widehat{\tau}_i(M))$.
		
		\medskip
		\noindent
		\textbf{Claim:} 
		Given 
		a partial isomorphism $\varphi\colon U\to W$ of~$Y_i$, there is a partial isomorphism~$\overline{\varphi}^j\colon \overline{U}^j\to \overline{W}^j$ of~$Y_{j}$ so that
		\[\varphi|_{U\cap (V(G)\setminus R)}=\overline{\varphi}^j|_{U\cap (V(G)\setminus R)}\] and
		 $\varphi$ and $\overline{\varphi}^j$ induce the same partial isomorphism of $G[R]$.
		
		\medskip
		\noindent
		$\ulcorner$ \textit{Proof of the claim.} 
		Let~$\mathcal{B}_i$ be the block system of~$H_i$ used to form the blow-up.
		
		Note that, because~$H_i$ is ultrahomogeneous, the map~$\varphi|_{V(H_i)}$ respects the blocks in~$\mathcal{B}_i$ in the sense that elements in the same block get mapped to elements in the same block. (Indeed there is an extension of~$\varphi|_{V(H_i)}$ to~$V(H_i)$ which respects the block system.) In particular, $\varphi|_{V(H_i)}$ induces a partial permutation of~$R$.
		Since~$\Aut(H_i)^{\mathcal{B}_i}$ and~$\Aut(H_j)^{\mathcal{B}_j}$ are permutationally isomorphic via~$\tau_i^{-1}\circ \tau_j$, there is an automorphism~$\widetilde{\varphi}^j$ of~$H_j$ mapping~$\overline{U}^j$ to~$\overline{W}^j$ so that~$\varphi$ and~$\widetilde{\varphi}^j$ induce the same permutation of~$R$.

		Consider the map~$\psi\colon \overline{U}^j \rightarrow \overline{W}^j$ for which~$\psi|_{\overline{U}^j\cap (V(G)\setminus R)}= \varphi$ and~$\psi|_{V(H_j)}= \widetilde{\varphi}^j$. 
		This map is a partial isomorphism by construction. It satisfies the conclusion of the claim.\hfill~$\lrcorner$

		\medskip
			It suffices now to assume that~$Y_1$ is ultrahomogeneous and prove that~$Y_2$ is also ultrahomogeneous.
			
		Let~$\varphi\colon U\rightarrow W$ be a partial isomorphism of~$Y_2$. Let~$\overline{\varphi}^1$ be the map given by the claim.
		Since we assume that~$Y_1$ is ultrahomogeneous $\overline{\varphi}^1$ extends to an automorphism~$\tau$ of~$Y_1$. Applying the claim again, we obtain a map~$\overline{\tau}^2 \in \Aut(Y_2)$ so that $\overline{\tau}^2$ and $\tau$, and thus also $\varphi$, induces the same map permutation of~$R$ and so that they agree on~$U\cap (V(G)\setminus R)$.
		
		Since~$G[H_2]$ is easygoing with respect to~$\mathcal{B}_2$ we can by Lemma~\ref{lem:application:of:easygoing} alter~$\tau$ without altering the induced action on~$\mathcal{B}_2$ but so that it agrees with~$\varphi$ on~$U\cap V(H_2)$
		such that the alteration is an automorphism since~$Y_2$ is a blow-up.
	\end{proof}

	If $H$ is the blow-up of the vertex color class $R$ of a monochromatic CCD $G$ (i.e., $G = G[R]$), then we also say $H$ is a blow-up of $G$.
	\begin{theorem}\label{thm:all:blow:ups:of:orient:and:unique}
		Let~$G$ and~$H$ be vertex-monochromatic ultrahomogeneous oriented graphs.
		If~$G$ is a blow-up of~$H$ and~$|G|>|H|>1$, then
		\[(G,H) \in \left\{(\cthree\cdot E_n, \cthree)\colon n \in \mathbb{N}_{\geq 2} \right\}, \left\{(E_n\cdot \cthree, E_n) \colon n \in \mathbb{N}_{\geq 2} \right\} \cup \left\{(\cfour, E_2)\right\}.\]
		In particular, a monochromatic ultrahomogeneous oriented graph arises in at most one way as a blow-up.
	\end{theorem}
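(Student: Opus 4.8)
\emph{Proof plan.}

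The plan is to combine Lachlan's classification (Theorem~\ref{thm: lachlan_asymmetric}) with a direct inspection of the block systems of the five candidate graphs. First I would spell out what ``$G$ is a blow-up of $H$'' means when $H$ is vertex-monochromatic: in Definition~\ref{definition:blow:up} the host graph then contributes no vertices outside the blown-up color class $R=V(H)$, so the blow-up has vertex set equal to that of the gadget and inherits all of its edge colors. Hence $G$ is, as a CCD, a graph carrying a block system $\mathcal{B}$ of $\Aut(G)$ with $|\mathcal{B}|=|V(H)|$ and with $\Aut(G)^{\mathcal{B}}$ permutationally isomorphic to $\Aut(H)$; as $|V(G)|>|V(H)|>1$, this $\mathcal{B}$ is nontrivial and its blocks have common size $m\coloneqq |V(G)|/|V(H)|\ge 2$. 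Since by Theorem~\ref{thm: lachlan_asymmetric} each of $G$ and $H$ is $\cfour$, $H_0$, or one of $E_n$, $E_n\cdot\cthree$, $\cthree\cdot E_n$ with $n\ge 1$, the task becomes: for each candidate $G$, list all nontrivial block systems $\mathcal{B}$ of $\Aut(G)$ and decide for which candidates $H$ with $1<|V(H)|=|\mathcal{B}|<|V(G)|$ one has $\Aut(G)^{\mathcal{B}}\cong\Aut(H)$.

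Then I would run the enumeration. If $G=E_n$, then $\Aut(E_n)=\Sym(n)$ is primitive on $[n]$, so there is no nontrivial block system and $E_n$ is no blow-up. If $G=\cfour$, then $\Aut(\cfour)=\mathbb{Z}_4$ has the unique nontrivial block system $\bigl\{\{1,3\},\{2,4\}\bigr\}$ with induced group $\mathbb{Z}_2=\Sym(2)=\Aut(E_2)$, giving precisely $(\cfour,E_2)$. If $G=\cthree\cdot E_n$ or $G=E_n\cdot\cthree$, a standard computation of the automorphism group of a wreath (lexicographic) product gives $\Aut(G)\cong\Sym(n)\wr\mathbb{Z}_3$, respectively $\mathbb{Z}_3\wr\Sym(n)$; using that $\Sym(n)$ is primitive on $[n]$ and $\mathbb{Z}_3$ primitive on $[3]$, together with the fact that the layerwise factors act independently, one sees that (for $n\ge 2$) the only nontrivial block system of $G$ is the canonical layer system, with induced group $\mathbb{Z}_3=\Aut(\cthree)$, respectively $\Sym(n)=\Aut(E_n)$. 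Comparing sizes yields the families $(\cthree\cdot E_n,\cthree)$ and $(E_n\cdot\cthree,E_n)$ for all $n\ge 2$ (and nothing for $n=1$, where $|V(G)|=|V(H)|$). Since each of these graphs has a \emph{unique} nontrivial block system, the final assertion of the theorem comes out of the same case analysis.

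The remaining and most delicate case is $G=H_0$, and I expect this to be the main obstacle. Here $|V(H_0)|=8$ and $H_0$ is an ultrahomogeneous orientation of $K_8$ minus a perfect matching (inspecting Figure~\ref{fig: uh-monochr-graphs}, each out-neighborhood and each in-neighborhood induces a $\cthree$, and each vertex has a unique non-neighbor). The non-adjacency relation is a perfect matching preserved by every automorphism, giving a block system $\mathcal{B}_0$ with four blocks of size $2$. I would show $\mathcal{B}_0$ is the only nontrivial block system: a size-$2$ block cannot be an adjacent pair, since $\Stab_{\Aut(H_0)}(v)$ acts transitively (as $\mathbb{Z}_3$) on the out-neighbors of $v$; and a block system with two blocks of size $4$ is impossible because, pinning down $\Aut(H_0)$ — point stabilizer $\mathbb{Z}_3$, the ``global flip'' $x_i\mapsto x_{i+4}$ and an order-$4$ automorphism both present, whence $\Aut(H_0)\cong\mathrm{SL}(2,3)$ of order $24$ — this group has no subgroup of index $2$ (and such a block system would force one, as it would consist of two complementary transversals of $\mathcal{B}_0$ stabilized by an index-$2$ subgroup). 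Finally, the pointwise stabilizer of $\mathcal{B}_0$ is the order-$2$ group generated by the global flip, so $\Aut(H_0)^{\mathcal{B}_0}$ has order $12$ and hence equals $\Alt(4)$, which is neither $\mathbb{Z}_4=\Aut(\cfour)$ nor $\Sym(4)=\Aut(E_4)$; as $\cfour$ and $E_4$ are the only monochromatic ultrahomogeneous oriented graphs on four vertices, $H_0$ is a blow-up of none of them. Assembling the five cases gives the classification; the $H_0$ step is where one either invokes Lachlan's explicit description of $\Aut(H_0)$ or verifies by hand the structural facts above (the $\cthree$-structure of neighborhoods, the global flip and an order-$4$ automorphism, and the absence of an index-$2$ subgroup), all other cases being routine block-system bookkeeping for primitive groups and wreath products.
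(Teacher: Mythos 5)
Your proposal is correct and follows essentially the same route as the paper: invoke Lachlan's classification, enumerate the nontrivial block systems of each candidate $G$, and match the induced group $\Aut(G)^{\mathcal{B}}$ against the automorphism groups of candidate $H$'s (your more detailed $H_0$ analysis, giving $\Aut(H_0)^{\mathcal{B}_0}\cong\Alt(4)$ of order $12$, agrees with the paper's Table~\ref{tab:my-table} and is in fact more precise than the proof text's claim of $\mathrm{SL}(2,3)$, without affecting the conclusion).
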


	\begin{proof}	
	We analyze the list of ultrahomogeneous oriented monochromatic graphs (Theorem~\ref{thm: lachlan_asymmetric}) and obtain the list of combinations where one such graphs is a blow-up of another.
	By assumption $G = H[G\to_{\tau} V(H)]$ for some block system~$\mathcal{B}$ of $V(G)$ and  a permutational isomorphism
	$\tau \colon \Aut(G)^{\mathcal{B}}\to \Aut(H)$. 
	
	By assumption $|G| > |H|$ and, hence, $\mathcal{B}$ is a non-trivial block system of~$V(G)$.
	In particular, $G$ is not an edgeless graph.
	
	Four possibilities for~$G$ remain according to the list of Lachlan (Theorem~\ref{thm: lachlan_asymmetric}).
	
	If $G$ is a $\cfour$, then the only non-trivial block system $\mathcal{B}$ consists of the two diagonals of the $\cfour$ and $\Aut(G)^{\mathcal{B}} \cong \mathbb{Z}_2$. Since $\tau$ is a permutational isomorphism the only option for $H$ is $E_2$.
	
	In a similar fashion, we find that if $G$ is $\cthree \cdot E_n$ for some $n\in \mathbb{N}_{\geq 1}$, then $H = \cthree$, and, if $G$ is $E_n \cdot \cthree$ for some $n\in \mathbb{N}_{\geq 1}$, then $H = E_n$.
	
	The remaining case is that $G$ is $H_0$. The only non-trivial block system in this case consists of four isomophic copies of $E_2$ with $\Aut(G)^{\mathcal{B}} \cong SL(2,3)$. However, no graph with at most seven vertices on Lachlan's list has this automorphism group and, hence, no suitable permutational isomorphism $\tau$ exists.
	\end{proof}
	Note that the oriented graph~$\cfour$ as the blow-up of~$E_2$ is the only example among the oriented graphs where the connections between the blocks are not homogeneous.

		\section{The ultrahomogeneous oriented graphs with two vertex colors}\label{sec: bichromatic-classification}
		In this section we focus on oriented graphs~$G$ with two vertex color classes red~$R$ and blue~$B$.

		\paragraph*{Outline of the proof strategy.}
		Without loss of generality we assume~$|R|\geq |B|$.
		If $G$ is ultrahomogeneous, then~$G[R]$ is vertex-monochromatic and ultrahomogeneous. It is thus one of the graphs $\cfour$, $E_n$, $E_n\cdot \cthree$, $\cthree\cdot E_n$, or $H_0$ (Theorem~\ref{thm: lachlan_asymmetric} and Figure~\ref{fig: uh-monochr-graphs}).
		For each choice of~$G[R]$ we investigate how~$B$ can be connected to~$R$ making heavy use of the general extension theorem (Theorem~\ref{thm: general-extension}). 
		Specifically we set
		\begin{align*}
			N^R_+(b) &\coloneqq \{r \in R\colon (b,r) \in E(G)\},\\
			N^R_-(b) &\coloneqq \{r \in R\colon (r,b) \in E(G)\}, \text{ and}\\
			N^R(b) &\coloneqq N^R_+(b) \cup N^R_-(b).
		\end{align*}
		For each $b\in B$ it holds that $N^R_{-}(b) \cap N^R_{+}(b) = \emptyset$ and, hence, we set
		$A(b)$ to be the ordered partition obtained from $(N^R_+(b), N^R_-(b), R\setminus N^R(b))$ by deleting all empty parts.
		Conversely, for a partition~$A$ of $R$ we set $X(A) \coloneqq \{b \in B\colon A(b) = A\}$ and $\mathcal{B} \coloneqq \{X(A)\colon A \in \mathcal{A}\}$. 
		Since~$G$ is an oriented graph, we know that partitions~$A(b)$ have at most three parts, so~$|A(b)|\leq 3$ for every $b\in B$.
		
		We investigate partitions of~$G[R]$ with at most~$3$ parts and rule out the ones that are not ultrahomogeneous. For the ones that remain,~$B$ has least~$|\mathcal{A}(A(b))|$ elements and this will rule out most of the remaining cases (since $|B| \leq |R|$ by assumption).
		For the cases that nevertheless remain, we determine the permutation group induced by~$\Aut(G[R])$ on~$\mathcal{A}(A(b))$. This permutation group must be the group~$\Aut(G)^\mathcal{B}$ induced by~$\Aut(G[B])$ on one of its block systems.
		However,~$G[B]$ is ultrahomogeneous which limits the possible situations one final time. Table~\ref{tab:my-table} shows all the possible groups~$\Aut(G[B])^\mathcal{B}$ that can arise and the monochromatic graphs that admit them.
		What finally remains in the end are connections between~$R$ and~$B$ that indeed lead to ultrahomogeneous graphs.

		\begin{table}[t]
			\centering
			\caption{The oriented ultrahomogeneous digraphs $G$ according to Lachlan's classification (Theorem~\ref{thm: lachlan_asymmetric}).
				In the column \emph{blocks} we list for each possible non-trivial block system of $\Aut(G)$ 
				the isomorphism type of the subgraph induced by one (and thus by each) block.
				In the last column for each block system $\mathcal{B}$ we list the 
				induced action $\Aut(G)^{\mathcal{B}}$ on $\mathcal{B}$.}
		\label{tab:my-table}
		\begin{tabular}{|c|c|c|c|c|}
			\hline
			$G$ & $|V(G)|$ &$\Aut(G)$ & \begin{tabular}[c]{@{}c@{}}block induced graph\end{tabular} & $\Aut(G)^\mathcal{B}$ \\ \hline
			
			$E_n$ & $n$ & $\Sym(n)$ & \begin{tabular}[c]{@{}c@{}}$E_1$\end{tabular} & 
			\begin{tabular}[c]{@{}c@{}}$\Sym(n)$ \end{tabular} \\
			\hline
			
			$\cfour$ & $4$ & $\mathbb{Z}_4$ & \begin{tabular}[c]{@{}c@{}}$E_1$\\ $E_2$ \end{tabular} & \begin{tabular}[c]{@{}c@{}}$\mathbb{Z}_4$\\ $\mathbb{Z}_2$ \end{tabular} \\
			\hline
			
			$H_0$ & $8$ & $\mathrm{SL}(2,3)$ & \begin{tabular}[c]{@{}c@{}}$E_1$\\ $E_2$\end{tabular} & \begin{tabular}[c]{@{}c@{}}$\mathrm{SL}(2,3)$\\ $\Alt(4)$\end{tabular} \\ \hline
			
			$E_n \cdot \cthree$  & $3n$ & $\mathbb{Z}_3 \wr \Sym(n)$ & \begin{tabular}[c]{@{}c@{}}$E_1$\\ $\cthree$\end{tabular} & \begin{tabular}[c]{@{}c@{}}$\mathbb{Z}_3 \wr \Sym(n)$\\ $\Sym(n)$\end{tabular} \\ \hline
			
			$\cthree \cdot E_n$  & $3n$ & $\Sym(n) \wr \mathbb{Z}_3$ & \begin{tabular}[c]{@{}c@{}}$E_1$\\ $E_n$\end{tabular} & \begin{tabular}[c]{@{}c@{}}$\Sym(n) \wr \mathbb{Z}_3$\\ $\mathbb{Z}_3 $\end{tabular} \\ \hline
		\end{tabular}
	\end{table}

\begin{lemma}\label{lem: parts_C4}
Let $A$ be a partition of $V(\cfour)$ with~$|A|\leq 3$.
The set $\mathcal{A}(A)$ is a non-trivial ultrahomogeneous system of partitions with $|\mathcal{A}(A)|\leq 4$  if and only if~$A$ is a partition of~$\cfour$ into two independent sets of order 2.
The corresponding permutation group induced by~$\Aut(G)$ on~$\mathcal{A}(A)$ is $\mathbb{Z}_2$.
\end{lemma}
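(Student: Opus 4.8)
The plan is to work with the standard labeling of $\cfour$ on vertex set $\{1,2,3,4\}$ with arcs $i\to i+1$ (indices modulo $4$), so that $\Aut(\cfour)=\langle\sigma\rangle\cong\mathbb{Z}_4$ with $\sigma(i)=i+1$. Since $\mathcal{A}(A)$ is the orbit of $A$ under $\Aut(\cfour)$ acting on ordered partitions, $|\mathcal{A}(A)|$ divides $4$, so the bound $|\mathcal{A}(A)|\le 4$ is automatic and non-triviality just says $A$ is not fixed by $\sigma$. Moreover, being an ultrahomogeneous system of partitions requires in particular the case $\ell=1$ of the definition (with $A_1=A$), namely that $\cfour$ equipped with $\vcol_G(A)$ is ultrahomogeneous, and this property depends only on the unordered partition underlying $A$. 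So the first step is to enumerate, up to the $\sigma$-action, the unordered partitions of $\{1,2,3,4\}$ into at most three parts: the trivial partition; a singleton together with a $3$-set; two pairs each forming an arc (e.g.\ $\{1,2\}\mid\{3,4\}$); the two ``diagonals'' $\{1,3\}\mid\{2,4\}$; an arc-pair together with two singletons; and a diagonal-pair together with two singletons.

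Next I would dispose of everything except the diagonal-pair. The trivial partition gives $|\mathcal{A}(A)|=1$ and is excluded by non-triviality. For each of the four remaining non-diagonal types a single uniform argument applies: the colored graph $(\cfour,\vcol_G(A))$ has trivial automorphism group --- in the three types containing a singleton part this is because a uniquely colored vertex has trivial stabilizer in $\mathbb{Z}_4$, and in the arc-pair type because $\sigma^2$ interchanges the two color classes --- yet it always admits a non-identity color-preserving single-vertex partial isomorphism (swap two equally colored vertices), which therefore cannot be extended. Hence none of these partitions is even ultrahomogeneous, let alone the base of an ultrahomogeneous system of partitions.

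It then remains to show that the diagonal partition $A=(\{1,3\},\{2,4\})$ works and to identify the induced group. Here $\sigma^2$ preserves $\vcol_G(A)$, so $(\cfour,\vcol_G(A))$ has automorphism group $\{\mathrm{id},\sigma^2\}\cong\mathbb{Z}_2$, and $\mathcal{A}(A)=\{A,\sigma(A)\}$ has exactly two elements (since $\sigma^2(A)=A$ as ordered partitions while $\sigma(A)\ne A$; the same holds if one starts from the other ordering $(\{2,4\},\{1,3\})$). I would verify ultrahomogeneity of $(\cfour,\vcol_G(A))$ by running through the induced subgraphs of each order: for orders $0$ and $4$ there is nothing to do, for orders $1$ and $2$ every partial isomorphism is a restriction of $\mathrm{id}$ or of $\sigma^2$, and for order $3$ every induced subgraph is a directed path of length two, hence rigid, so again every partial isomorphism is a restriction of $\mathrm{id}$ or $\sigma^2$; in all cases it extends. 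Finally, because $A$ and $\sigma(A)$ share the same underlying unordered partition, every refinement coloring $\vcol_G(A_1,\dots,A_\ell)$ with $A_i\in\mathcal{A}(A)$ induces the same partition of $V(\cfour)$ into color classes as $\vcol_G(A)$ and is therefore ultrahomogeneous as well; thus $\mathcal{A}(A)$ is an ultrahomogeneous system of partitions of size $2$. The induced action of $\mathbb{Z}_4$ on $\{A,\sigma(A)\}$ has $\sigma$ transposing the two partitions and $\sigma^2$ fixing both, so it is $\mathbb{Z}_2$, as claimed.

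The only genuine work is the ultrahomogeneity check for the diagonally colored $\cfour$; it is a small finite verification, but one must be careful to use the orientation of the arcs --- for instance the vertex permutation swapping $1$ and $3$ while fixing $2$ and $4$ is \emph{not} a partial isomorphism, since the arc $1\to2$ would have to be sent to the non-arc $3\to2$ --- so the case bookkeeping over induced subgraphs needs some care, although it collapses quickly thanks to the $\sigma^2$-symmetry.
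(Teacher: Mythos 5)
Your proof is correct and follows essentially the same route as the paper's: a direct finite case analysis of the partitions of $V(\cfour)$ into at most three parts, ruling out every type except the diagonal pair by exhibiting a non-extendable colour-preserving partial isomorphism, and then verifying the diagonal case by hand. You are somewhat more explicit than the paper, which compresses the ``if'' direction and the identification of the induced $\mathbb{Z}_2$-action into ``this settles the claim,'' but there is no substantive difference in method.
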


\begin{proof}
The ultrahomogeneous induced subgraphs of $\cfour$ are isomorphic to~$E_1$, to~$E_2$, or to~$\cfour$.
Let $u$ and $v$ be two non-adjacent vertices of $\cfour$ and let $\varphi$ be the transposition of $u$ and $v$.
Observe that there is a unique extension $\psi \in \Aut(\cfour)$ of $\varphi$ which also interchanges the other two vertices of $\cfour$.
In particular, there is no ultrahomogeneous partition of $\cfour$ which simultaneously contains 1-vertex parts and 2-vertex parts.
This settles the claim.
\end{proof}

\begin{corollary} \label{coro: c4}
Let $G$ be an ultrahomogeneous oriented graph on two vertex color classes~$R$ and~$B$ where $|R| \geq |B|$ and $G[R] \cong \cfour$.
If $R$ and $B$ are not homogeneously connected, then $G$ is equivalent (up to edge color changes) to one of the graphs of Figure~\ref{fig: R_c4}.
\end{corollary}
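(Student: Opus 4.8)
The plan is to feed Lemma~\ref{lem: parts_C4} into the general extension theorem (Theorem~\ref{thm: general-extension}) and then to dispatch the finitely many surviving configurations by hand. Since $G$ is ultrahomogeneous, all five conditions of Theorem~\ref{thm: general-extension} hold; because $G$ is oriented, every ordered partition $A(b)$ has at most three parts, and $\mathcal{A}:=\{A(b):b\in B\}$ is an ultrahomogeneous system of partitions of $G[R]$, say $\mathcal{A}=\mathcal{A}(A^\star)$ with $A^\star$ one of the $A(b)$, so that $|\mathcal{A}|\le|B|\le|R|=4$. I would first observe that the non-homogeneity hypothesis forces $\mathcal{A}$ to be non-trivial: the only ordered partition of $V(\cfour)$ with at most three parts that is invariant under $\Aut(\cfour)\cong\mathbb{Z}_4$ is the one-part partition, so if $\mathcal{A}$ were trivial then $A(b)$ would be the one-part partition for every $b\in B$, meaning that each $b$ dominates $R$, is dominated by $R$, or is non-adjacent to all of $R$; since any automorphism of $G$ carrying one vertex of $B$ to another preserves $|N^R_+(b)|$ and $|N^R_-(b)|$, and since partial isomorphisms between single vertices of $B$ extend to automorphisms, all vertices of $B$ are of the same kind and $R$ and $B$ are homogeneously connected, contrary to assumption.

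Now Lemma~\ref{lem: parts_C4} applies and yields that $A^\star$ is one of the two orderings of the partition of $\cfour$ into its two independent pairs, that $|\mathcal{A}|=2$ (write $\mathcal{A}=\{A,A'\}$), and that $\Aut(G[R])^{\mathcal{A}}\cong\mathbb{Z}_2$. By Conditions~\ref{itm: block-system-B} and~\ref{itm: block-group} of Theorem~\ref{thm: general-extension}, $\mathcal{B}=\{X(A),X(A')\}$ is then a two-block block system of $\Aut(G[B])$ on which $\Aut(G[B])$ induces $\mathbb{Z}_2$. As $G[B]$ is vertex-monochromatic and ultrahomogeneous it is vertex-transitive, so its two blocks have equal size; hence $|B|$ is even, and since $|B|\le 4$ we get $|B|\in\{2,4\}$. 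Consulting Table~\ref{tab:my-table} for the ultrahomogeneous oriented graphs on two or four vertices that admit a two-block block system with induced group $\mathbb{Z}_2$ leaves exactly $G[B]\cong E_2$, with its discrete block system, and $G[B]\cong\cfour$, with its diagonal block system; Condition~\ref{itm: easygoing} holds in both cases, as one checks directly.

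At this point $G[R]$, $G[B]$, the ultrahomogeneous partition, and the block system $\mathcal{B}$ are all fixed, so $G$ is determined by how the vertices of the two blocks $X(A)$ and $X(A')$ attach to $R$---that is, by which of the three slots (out-neighbourhood, in-neighbourhood, non-neighbourhood of $b$ in $R$) is occupied by each of the two parts of $A$---while the arcs inside $B$ and between the two blocks of $B$ are forced by Theorem~\ref{thm: general-extension}. I would finish by running through these finitely many attachment patterns, discarding those that violate one of the five conditions and identifying the survivors, up to edge color changes, with the graphs of Figure~\ref{fig: R_c4}. I expect this last finite check to be the main obstacle, together with the bookkeeping it requires: because an oriented graph distinguishes out-neighbours, in-neighbours, and non-neighbours, a ``two-part'' attachment pattern comes in several flavours, and one must ensure the enumeration is exhaustive yet free of repetitions modulo edge color changes. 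Everything preceding this step is a routine application of the machinery already developed.
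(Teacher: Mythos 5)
Your proposal is correct and follows essentially the same route as the paper's proof: apply Lemma~\ref{lem: parts_C4} to pin down $A(b)$ as an ordering of the two diagonals, read off the induced $\mathbb{Z}_2$-action, use Table~\ref{tab:my-table} to restrict $G[B]$ to $E_2$ or $\cfour$, and close with Theorem~\ref{thm: general-extension}. The extra step you include---showing that a trivial system of partitions would force homogeneous connectivity, since the transitive group $\Aut(\cfour)\cong\mathbb{Z}_4$ admits no invariant proper partition---is a correct filling-in of a detail the paper leaves implicit, and your final enumeration of attachment patterns is left at the same (acceptable) level of detail as the paper's ``the result follows.''
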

\begin{figure}[!h]
\centering
\begin{tikzpicture}[scale=.6]
	
	\begin{scope}[rotate=90]
		\node[xsvertex, WildStrawberry] (v1) at (0,-1.5) {};
		\node[xsvertex, WildStrawberry] (v2) at (0,-.5) {};
		\node[xsvertex, WildStrawberry] (v3) at (0,.5) {};
		\node[xsvertex, WildStrawberry] (v4) at (0,1.5) {};
		\node[xsvertex, MidnightBlue] (b1) at (1.2,-.5) {};
		\node[xsvertex, MidnightBlue] (b2) at (1.2,.5) {};
		\draw[harc] (v1) edge (v2) (v2) edge (v3) (v3) edge (v4);
		\draw[harc] (v4) edge[bend right=50] (v1);
		\draw[harc] (b1) edge (v1) (b1) edge (v3) (b2) edge (v2) (b2) edge (v4);
	\end{scope}
	
	\begin{scope}[shift ={(5,1)}]
		\node[xsvertex, WildStrawberry] (v1) at (0,.2) {};
		\node[xsvertex, WildStrawberry] (v2) at (0,-1) {};
		\node[xsvertex, WildStrawberry] (v3) at (1,.2) {};
		\node[xsvertex, WildStrawberry] (v4) at (1,-1) {};
		\node[xsvertex, MidnightBlue] (b1) at (3,.2) {};
		\node[xsvertex, MidnightBlue] (b2) at (3,-1) {};
		\node[xsvertex, MidnightBlue] (b3) at (4,.2) {};
		\node[xsvertex, MidnightBlue] (b4) at (4,-1) {};
		\draw[harc] (v1) edge (v2) (v2) edge (v3) (v3) edge (v4) (v4) edge (v1);
		\draw[harc] (b1) edge (b2) (b2) edge (b3) (b3) edge (b4) (b4) edge (b1);
		\draw[harc, bend right=35] (b1) edge (v1) (b1) edge (v3) (b3) edge (v1) (b3) edge (v3);
		\draw[harc, bend left=35] (b2) edge (v2) (b2) edge (v4) (b4) edge (v2) (b4) edge (v4);
	\end{scope}

\end{tikzpicture}
\caption{Up to color changes and bichromatic symmetrization, these are the ultrahomogeneous bichromatic oriented graphs containing~$\protect\cfour$ as a color class. 
}
\label{fig: R_c4}
\end{figure}
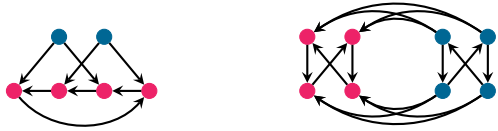

\begin{proof}
By Lemma~\ref{lem: parts_C4} we may assume that there is a vertex $b \in B$ such that
$A(b) = (\{r_1, r_3)\}, \{r_2, r_4\}))$.
Observe that $\mathbb{Z}_2$ acts on $\mathcal{A}(A(b))$.
According to Table~\ref{tab:my-table} the graphs $E_2$ and $\cfour$ are the only two oriented graphs with at most four vertices and a block system with suitable induced actions.
By Theorem~\ref{thm: general-extension} the result follows.
\end{proof}

\begin{lemma}\label{lem: parts-En}
If $A$ is an ordered partition of $V(E_n)$, then $\mathcal{A}(A)$ is an ultrahomogeneous system of partitions of $E_n$.
Moreover, if $\mathcal{A}(A)$ is non-trivial, $|\mathcal{A}(A)| \leq n$, and $2 \leq |A| \leq 3$, then
$A$ has precisely two parts one of which is a singleton.
The corresponding permutation group induced by~$\Aut(G)$ on~$\mathcal{A}(A)$ is $\Sym(n)$.
\end{lemma}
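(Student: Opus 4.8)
The plan is to handle the three assertions of the lemma separately, using throughout that $E_n$ is edgeless and hence $\Aut(E_n)=\Sym(n)$.

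For the first assertion I would prove the more general fact that every vertex-colored edgeless graph is ultrahomogeneous. A partial isomorphism of such a graph is simply a bijection $\varphi\colon U\to W$ respecting the vertex coloring; writing $C_1,\dots,C_m$ for the color classes, respecting colors forces $|U\cap C_i|=|W\cap C_i|$, hence $|C_i\setminus U|=|C_i\setminus W|$ for each $i$. Picking an arbitrary bijection $C_i\setminus U\to C_i\setminus W$ for each $i$ and taking the union together with $\varphi$ produces a color-preserving permutation, i.e.\ an automorphism, that extends $\varphi$. Applying this to the colorings $\vcol_{E_n}(A_1,\dots,A_\ell)$ for arbitrary finite sequences $A_1,\dots,A_\ell$ of ordered partitions immediately gives that $\mathcal{A}(A)$ is an ultrahomogeneous system of partitions for every ordered partition $A$ of $V(E_n)$.

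For the second assertion the key observation is that, under $\Sym(n)$, the orbit of an ordered partition $A=(P_1,\dots,P_k)$ consists of exactly those ordered partitions whose $i$-th part has cardinality $|P_i|$, so $|\mathcal{A}(A)|$ equals the multinomial coefficient $n!/(|P_1|!\cdots|P_k|!)$. A one-line exchange argument (moving an element from a smaller part into a larger one only increases the product of factorials of the part sizes) shows this multinomial coefficient is minimized, over all compositions of $n$ into $k$ positive parts, by the most unbalanced composition $(1,\dots,1,n-k+1)$, so $|\mathcal{A}(A)|\ge n!/(n-k+1)!$. Now assume $k\coloneqq|A|\in\{2,3\}$ and $|\mathcal{A}(A)|\le n$. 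If $k=3$ this yields $n\ge|\mathcal{A}(A)|\ge n(n-1)$, impossible as $n\ge 3$; hence $k=2$. If $k=2$ and both parts had size at least $2$, then $n\ge 4$ and $|\mathcal{A}(A)|=\binom{n}{|P_1|}\ge\binom{n}{2}>n$, again impossible. So exactly one of the two parts is a singleton, which is the claimed conclusion (the non-triviality hypothesis is in fact automatic here, since then $|\mathcal{A}(A)|=n\ge 2$).

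For the third assertion, in the surviving case $A=(\{v\},V(E_n)\setminus\{v\})$ the assignment $(\{w\},V(E_n)\setminus\{w\})\mapsto w$ is a bijection from $\mathcal{A}(A)$ onto $[n]$ which, for every $\varphi\in\Aut(E_n)=\Sym(n)$, carries $\varphi(A)=(\{\varphi(v)\},V(E_n)\setminus\{\varphi(v)\})$ to $\varphi(v)$; hence it is a permutational isomorphism from $\Aut(E_n)^{\mathcal{A}(A)}$ to the natural action of $\Sym(n)$ on $[n]$, and the induced permutation group is $\Sym(n)$. I do not expect any real obstacle in this lemma; the only place calling for slight care is the elementary estimate on the multinomial coefficient, where one has to keep in mind that having $k$ nonempty parts forces $n\ge k$.
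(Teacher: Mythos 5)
Your proof is correct and follows essentially the same route as the paper's: first observe that any vertex coloring of an edgeless graph is ultrahomogeneous (the paper phrases this via homogeneous connectivity of disjoint subsets), then bound $|\mathcal{A}(A)|$ by counting ordered partitions with prescribed part sizes to rule out everything except two parts with a singleton. Your treatment is somewhat more systematic (the general multinomial formula and the explicit permutational isomorphism onto $\Sym(n)$ acting on $[n]$, which the paper leaves implicit), but the underlying argument is the same.
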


\begin{proof}
Observe that each two disjoint subsets of $V(E_n)$ are homogeneously connected and, hence, every vertex coloring of $E_n$ yields an ultrahomogeneous colored graph.
Denote the minimal cardinality of a part of $A$ by $p$.
If~$A$ has two parts and neither part is a singleton, then $n \geq p+2 \geq 2$ and, hence, $|\mathcal{A}(A)| \geq \binom{n}{2} \geq n$.
If $A$ has three parts of cardinality $p_1$, $p_2$, and $p_3$, respectively, then
$|\mathcal{A}(A)| = \binom{n}{p_1} \cdot \binom{n-p_1}{p_2}$ which is at least $n\cdot2$.
\end{proof}

\begin{corollary}\label{coro: en}
Let $G$ be an ultrahomogeneous oriented graph on two vertex color classes~$R$ and~$B$ where $|R| \geq |B|$ and $G[R] \cong E_n$ for some $n \in \mathbb{N}_{\geq 1}$.
If $R$ and $B$ are not homogeneously connected, then $G[B] \cong E_n$ and $R$ and $B$ are matching-connected.
\end{corollary}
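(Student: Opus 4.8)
The plan is to apply the general extension theorem (Theorem~\ref{thm: general-extension}) together with Lemma~\ref{lem: parts-En}, using the notation $A(b)$, $X(A)$, $\mathcal{B}$ fixed above. Since $G$ is ultrahomogeneous, Theorem~\ref{thm: general-extension} yields that $G[B]$ is a vertex-monochromatic ultrahomogeneous oriented graph, that $\mathcal{A}=\{A(b)\colon b\in B\}$ (with $|A(b)|\le 3$ as $G$ is oriented) is an ultrahomogeneous system of partitions of $G[R]=E_n$, and that $\mathcal{B}=\{X(A)\colon A\in\mathcal{A}\}$ is a block system of $\Aut(G[B])$ with $\Aut(G[B])^{\mathcal{B}}=\{\widehat\varphi\colon\varphi\in\Aut(E_n)^{\mathcal{A}}\}$. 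I would first record two easy facts. Since $G$ is ultrahomogeneous, any two vertices of $B$ are exchanged by an automorphism of $G$, so $B$ is a single $\Aut(G)$-orbit; hence all partitions $A(b)$ have the same shape, and they are non-trivial exactly because $R$ and $B$ are not homogeneously connected. Moreover, an ultrahomogeneous system of partitions is a single $\Aut(E_n)$-orbit, so $\mathcal{A}=\mathcal{A}(A(b))$ for each $b$, and therefore $|\mathcal{A}|=|\mathcal{A}(A(b))|\le|B|\le n$.

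Next I would apply the ``moreover'' part of Lemma~\ref{lem: parts-En} with $A\coloneqq A(b)$: this is a non-trivial ordered partition of $V(E_n)$ with $2\le|A(b)|\le 3$ and $|\mathcal{A}(A(b))|\le n$, so $A(b)$ has precisely two parts, one of which is a singleton, and $\Aut(E_n)$ induces $\Sym(n)$ on $\mathcal{A}(A(b))$. From the shape of $A(b)$ it follows that $|\mathcal{A}(A(b))|=n$ (there are exactly $n$ choices for the singleton part) and that this induced $\Sym(n)$ is its natural action on those $n$ choices. Consequently $|\mathcal{B}|=|\mathcal{A}|=n$; since $\mathcal{B}$ partitions $B$ and $|B|\le n$, we get $|B|=n$ and $\mathcal{B}$ discrete. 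Thus $\Aut(G[B])=\Aut(G[B])^{\mathcal{B}}$ is, by Theorem~\ref{thm: general-extension}, permutationally isomorphic to the natural action of $\Sym(n)$ on $n$ points, i.e.\ $\Aut(G[B])=\Sym(B)$. Since $\Sym(B)$ is transitive on ordered pairs of distinct vertices, an oriented graph with automorphism group $\Sym(B)$ has no edges, so $G[B]\cong E_n$ (consistent with the $E_n$ row of Table~\ref{tab:my-table}).

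It remains to exhibit the matching witnessing matching-connectedness. Because $B$ is a single $\Aut(G)$-orbit, the triples $(N^R_+(b),N^R_-(b),R\setminus N^R(b))$ have the same pattern of part sizes, so the singleton part of $A(b)$ occupies the same coordinate (out-neighborhood, in-neighborhood, or non-neighborhood) for every $b\in B$. Let $s(b)\in R$ be the vertex in that singleton part. Since $\mathcal{B}$ is discrete, $b\mapsto A(b)$ is a bijection onto $\mathcal{A}$, so $s$ is injective, hence a bijection as $|B|=|R|=n$; set $\alpha\coloneqq s^{-1}\colon R\to B$. Now $\ecol_G((b,r))=1$ iff $r\in N^R_+(b)$, and $N^R_+(b)$ equals, uniformly in $b$, one of $\emptyset$, $\{s(b)\}$, $R\setminus\{s(b)\}$; hence $\ecol_G((b,r))$ depends only on whether $r=s(b)$, equivalently whether $\alpha(r)=b$, and symmetrically for $\ecol_G((r,b))$. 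Therefore $\ecol_G$ is constant on each of $\{(r,\alpha(r))\colon r\in R\}$, $\{(\alpha(r),r)\colon r\in R\}$, $(R\times B)\setminus\{(r,\alpha(r))\colon r\in R\}$, and $(B\times R)\setminus\{(\alpha(r),r)\colon r\in R\}$, which, together with the assumption that $R$ and $B$ are not homogeneously connected, is exactly the definition of $R$ and $B$ being matching-connected.

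The step I expect to require the most care is the last one: one must be sure the singleton part of $A(b)$ sits in the \emph{same} of the three coordinates for all $b\in B$ so that a single bijection $\alpha$ simultaneously controls all four edge-color restrictions; the single-$\Aut(G)$-orbit observation is precisely what delivers this, and an argument tracking only the abstract two-part partition $A(b)$ would not. The other ingredients — the bound $|\mathcal{A}|\le n$ and the implication $\Aut(G[B])=\Sym(B)\Rightarrow G[B]\cong E_n$ — are short given Theorem~\ref{thm: general-extension} and Lemma~\ref{lem: parts-En}.
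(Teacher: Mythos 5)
Your proof is correct and follows essentially the same route as the paper: apply the general extension theorem, bound $|\mathcal{A}(A(b))|$ by $|B|\le n$, invoke Lemma~\ref{lem: parts-En} to force a singleton-plus-rest partition with induced group $\Sym(n)$, and conclude $G[B]\cong E_n$ from the resulting action on $\mathcal{B}$. You additionally spell out two points the paper leaves implicit — deriving $G[B]\cong E_n$ directly from the $2$-transitivity of $\Sym(B)$ rather than only citing Table~\ref{tab:my-table}, and explicitly constructing the bijection $\alpha$ witnessing matching-connectedness — both of which are sound.
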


\begin{proof}
Fix $b \in B$.
Since $G$ is oriented and $R$ and $B$ are not homogeneously connected we obtain $2 \leq |A(b)| \leq 3$.
By Theorem~\ref{thm: general-extension} the set $\mathcal{A}(A(b))$ is an ultrahomogeneous system of partitions.
Hence, we may apply Lemma~\ref{lem: parts-En}, which yields that without loss of generality $A(b) = (\{v\}, V(E_n)\setminus \{v\})$.
Observe that $\Aut(E_n)^{\mathcal{A}(A(b))} = \Sym(n)$.
By Theorem~\ref{thm: general-extension} we have that $\Aut(G[B])^{\mathcal{B}} = \Sym(n)$.
According to Table~\ref{tab:my-table} the only candidate for~$G[B]$ is~$E_n$ (since all other graphs which have a block system on which $\Sym(n)$ acts have more than $n$ vertices).
\end{proof}


\begin{lemma} \label{lem: parts-C3}
If $A$ is a partition of $\cthree\cdot E_n$, then $\mathcal{A}(A)$ is a non-trivial ultrahomogeneous system of partitions with $|\mathcal{A}(A)| \leq 3n$ if and only if $A = (P_1, P_2, P_3)$ such that $G[P_i] \cong E_n$ for each $i \in [3]$.
The corresponding permutation group induced by $\Aut(G)$ on $\mathcal{A}(A)$ is $\mathbb{Z}_3$.
\end{lemma}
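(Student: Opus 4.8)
The plan is to prove the two implications separately; essentially all the work is in the ``only if'' direction. Throughout write $G=\cthree\cdot E_n$, and let $\mathcal{L}=\{L_1,L_2,L_3\}$ be its block system of three \emph{layers}, each $L_i$ inducing $E_n$, with all arcs running $L_1\to L_2\to L_3\to L_1$. Recall $\Aut(G)=\Sym(n)\wr\mathbb{Z}_3$, so every automorphism cyclically permutes the three layers (via an element of $\mathbb{Z}_3$) and acts arbitrarily inside each layer. As in the analogous Lemmas~\ref{lem: parts_C4} and~\ref{lem: parts-En}, I restrict attention to ordered partitions $A$ with at most three parts (the only case relevant here, via the neighborhood partitions $A(b)$).

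For the ``if'' direction, suppose $A=(P_1,P_2,P_3)$ with $G[P_i]\cong E_n$ for each $i\in[3]$. Then $A=\mathcal{L}$ as an unordered partition, so every $\varphi(A)$ with $\varphi\in\Aut(G)$ equals $\mathcal{L}$ written in one of its three cyclic orders; hence $\mathcal{A}(A)$ is non-trivial, $|\mathcal{A}(A)|=3\le 3n$, and $\Aut(G)$ induces $\mathbb{Z}_3$ on $\mathcal{A}(A)$. That $\mathcal{A}(A)$ is an ultrahomogeneous system of partitions is then immediate: for any $A_1,\dots,A_\ell\in\mathcal{A}(A)$ the coloring $\vcol_G(A_1,\dots,A_\ell)$ is just the layer coloring (each $A_j$ induces $\mathcal{L}$), and $G$ with the layer coloring is ultrahomogeneous, since a color-preserving bijection between induced subgraphs is automatically an isomorphism (layers are independent, and distinct layers are homogeneously connected) and extends to an automorphism by completing it to arbitrary within-layer bijections.

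For the ``only if'' direction, assume $\mathcal{A}(A)$ is a non-trivial ultrahomogeneous system of partitions with $|\mathcal{A}(A)|\le 3n$. Taking the length-one sequence shows $\vcol_G(A)$ is ultrahomogeneous, and non-triviality excludes the trivial partition, so $A$ has two or three parts. The key structural step I would establish is that \emph{every part $P$ of $A$ is contained in a single layer or else meets all three layers}. Suppose instead that $P$ meets exactly $L_i$ and $L_j$ and misses $L_k$; choose $v\in P\cap L_j$ and $w\in P\cap L_i$. The partial isomorphism $v\mapsto w$ of $(G,\vcol_G(A))$ extends to some $\psi\in\Aut(G)$ fixing every part of $A$ setwise (as $G$ is vertex-monochromatic, the color classes of $\vcol_G(A)$ are exactly the parts of $A$). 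Since $\psi(v)=w$, the automorphism $\psi$ maps $L_j$ onto $L_i$, so its action on $\mathcal{L}$ is the $3$-cycle $(j\,i\,k)$ and hence maps $L_i$ into $L_k$; but then $\emptyset\ne\psi(P\cap L_i)\subseteq\psi(P)\cap L_k=P\cap L_k=\emptyset$, a contradiction.

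It remains to rule out transversal parts. If some part $P$ meets all three layers, choose $v\in P\cap L_1$ and $w\in P\cap L_2$; the automorphism extending $v\mapsto w$ acts on $\mathcal{L}$ as the $3$-cycle $(1\,2\,3)$ and fixes every part setwise, so $|T\cap L_1|=|T\cap L_2|=|T\cap L_3|$ for every part $T$. Thus every part is transversal with a common per-layer size, and since $A$ has at least two parts, every such size lies in $\{1,\dots,n-1\}$ (so in particular $n\ge 2$). Let $P$ be the first part of $A$ and $p$ its per-layer size; even using only within-layer permutations, $P$ is carried to all $\binom{n}{p}^3$ subsets with $p$ elements in each layer, and distinct first parts yield distinct elements of $\mathcal{A}(A)$, so $|\mathcal{A}(A)|\ge\binom{n}{p}^3\ge n^3>3n$, contradicting the hypothesis. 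Hence no part is transversal, so $A$ refines $\mathcal{L}$; its (at most three) parts all lie inside single layers yet must jointly cover all three layers, forcing $A$ to consist of exactly three parts, each a full layer. Therefore $A=\mathcal{L}$ up to reordering, $G[P_i]\cong E_n$, and $\Aut(G)$ induces $\mathbb{Z}_3$ on $\mathcal{A}(A)$ exactly as in the ``if'' direction. I expect the structural step to be the crux; the only remaining subtlety is that the ``uniform transversal'' colorings --- for instance a balanced $2$-coloring of $V(G)$ --- are genuinely ultrahomogeneous, so they are eliminated only by the cardinality bound $|\mathcal{A}(A)|\le 3n$.
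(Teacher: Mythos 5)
Your proposal is correct, and in the ``only if'' direction it takes a genuinely different route from the paper's. The paper first invokes (without proof) the classification of the induced ultrahomogeneous subgraphs of $\cthree\cdot E_n$ to conclude that every part induces either $E_{n''}$ or $\cthree\cdot E_{n'}$, and then counts isomorphic copies of a part against the bound $3n$ in both cases. You instead derive the same dichotomy (each part lies inside one layer or is a uniform transversal) directly, by extending single-vertex partial isomorphisms of $(G,\vcol_G(A))$ and using that the automorphism group acts on the layer block system as $\mathbb{Z}_3$; you then need the counting bound $\binom{n}{p}^3>3n$ only in the transversal case, and finish the within-layer case purely combinatorially from the restriction to at most three parts. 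This buys you two things. First, it sidesteps a weak spot in the paper's count: for a within-layer part inducing $E_{n''}$ the paper asserts $3\binom{n}{n''}>3n$ unless $n''=n$, which fails for $n''\in\{1,n-1\}$, whereas your argument never needs that inequality. Second, you make explicit the hypothesis $|A|\le 3$, which appears in the analogous Lemmas~\ref{lem: parts_C4} and~\ref{lem: parts en-c3} but is missing from the statement here; it is genuinely needed, since for $n=2$ the four-part partition $(\{a\},\{b\},L_2,L_3)$ with $L_1=\{a,b\}$ gives a non-trivial ultrahomogeneous system with $|\mathcal{A}(A)|=6=3n$ whose parts are not all layers. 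Since the lemma is only ever applied to neighbourhood partitions $A(b)$ of an oriented graph, which have at most three parts, your added restriction is exactly the right reading. Your ``if'' direction and the identification of the induced group as $\mathbb{Z}_3$ agree with what the paper leaves implicit.
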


\begin{proof}
Let $A$ be a non-trivial ultrahomogeneous partition of $G\coloneqq \cthree\cdot E_n$ and let~$P$ be a part of $A$.
An induced ultrahomogeneous subgraph of~$\cthree\cdot E_n$ is either isomorphic to $\cthree\cdot E_{n'}$ for some $n' \in [n-1]$ or to $E_{n''}$ for some $n'' \in [n]$.

If $G[P] \cong \cthree\cdot E_{n'}$ for some $n' \in [n-1]$, then there are $\binom{n}{n'}^3$ isomorphic copies of $G[P]$ in~$G$.
Since~$G$ is ultrahomogeneous every isomorphic copy of~$P$ belongs to at least one ultrahomogeneous partition of $G$.
Hence,
\[|\mathcal{A}(A)|	 \geq \binom{n}{n'}^3 > 3n = |V(G)|,\] which is a contradiction.

If $G[P] \cong E_{n''}$ for some $n'' \in [n]$, then there are $3\binom{n}{n''}$ isomorphic copies of~$G[P]$ induced in $G$, which contradicts $|\mathcal{A}(A)| \leq 3n$ unless $n = n''$. In this case~$A$ has three parts each of which induces an $E_n$.
\end{proof}

\begin{lemma}\label{lem: parts en-c3} \label{lem: parts en cthree}
If $A$ is a partition of $G \coloneqq E_n \cdot \cthree$ with~$|A|\leq 3$, then $\mathcal{A}(A)$ is a non-trivial ultrahomogeneous system of partitions if and only if each part $P$ of~$A$ satisfies $G[P] \cong E_{n'} \cdot \cthree$ for some $n' \in [n-1]$.
\end{lemma}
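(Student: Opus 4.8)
The proof follows the pattern of Lemma~\ref{lem: parts-C3}, and (as in the preceding lemmas) uses that a relevant partition satisfies $|\mathcal{A}(A)|\le|V(G)|=3n$. Throughout I write $G=E_n\cdot\cthree$ as a vertex-disjoint union of directed triangles $T_1,\dots,T_n$. These are the blocks of the unique non-trivial block system of $\Aut(G)=\mathbb{Z}_3\wr\Sym(n)$, every arc of $G$ lies inside a single $T_i$, and consequently a subset of $V(G)$ induces a directed triangle exactly when it equals one of the $T_i$. In particular $G[P]\cong E_{n'}\cdot\cthree$ holds precisely when $P$ is a union of $n'$ of the triangles.

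\emph{The ``if'' direction.} Assume every part of $A$ is a union of triangles. Since no part equals $V(G)$ we have $|A|\ge2$, and the transposition of two triangles lying in different parts (an element of $\Sym(n)\le\Aut(G)$) moves $A$, so $|\mathcal{A}(A)|\ge 2$. Because $\Aut(G)$ preserves the block system, every partition in $\mathcal{A}(A)$ is again a union-of-triangles partition; hence for any sequence $A_1,\dots,A_\ell$ from $\mathcal{A}(A)$ the coloring $\vcol_G(A_1,\dots,A_\ell)$ is constant on each $T_i$. Then $G$ with that coloring is a disjoint union of monochromatic pieces, each isomorphic to some $E_m\cdot\cthree$ and with all cross-pairs being non-edges, so it is ultrahomogeneous by Theorem~\ref{thm: lachlan_asymmetric} together with the argument of Lemma~\ref{lem: color-disjoint union} (which only uses that the pieces are ultrahomogeneous, vertex-color disjoint, and pairwise homogeneously connected). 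Thus $\mathcal{A}(A)$ is a non-trivial ultrahomogeneous system.

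\emph{The ``only if'' direction.} Assume $\mathcal{A}(A)$ is a non-trivial ultrahomogeneous system. Taking $\ell=1$, $G$ with coloring $\vcol_G(A)$ is ultrahomogeneous, so every part $P$ induces an ultrahomogeneous monochromatic oriented graph. Now $G[P]$ is a disjoint union of induced subgraphs of the $\cthree$'s, so its connected components have at most three vertices and are pairwise isomorphic (vertex-transitivity); comparing with Theorem~\ref{thm: lachlan_asymmetric} leaves only $G[P]\cong E_m$ (each $T_i$ meets $P$ in at most one vertex) or $G[P]\cong E_{n'}\cdot\cthree$ ($P$ a union of triangles), and in particular no part meets any $T_i$ in exactly two vertices --- that would produce an isolated arc as a component, and no such graph lies on Lachlan's list. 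It remains to exclude parts of type $E_m$. If some part $P_1$ contains a whole triangle, then $G[P_1]$ has an arc, hence $G[P_1]\cong E_{n'}\cdot\cthree$ and $P_1$ is a union of triangles; if in addition some part $P_2$ is of type $E_m$, then picking a triangle $T$ with $|T\cap P_2|=1$ one checks, using $|T\cap P_1|\in\{0,3\}$ and $\sum_j|T\cap P_j|=3$, that $T$ meets a third part in exactly two vertices --- a contradiction. So all parts are unions of triangles, and $|A|\ge2$ makes each a proper union, i.e.\ $G[P]\cong E_{n'}\cdot\cthree$ with $n'\in[n-1]$. Otherwise no part contains a whole triangle, so every part has type $E_m$; then every $T_i$ is split with one vertex into each of three distinct parts, i.e.\ $|A|=3$ and $A$ cuts every triangle into singletons. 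For such a transversal partition a direct computation of its $\Aut(G)$-stabilizer gives $|\mathcal{A}(A)|\ge 3^n$, which exceeds $3n$ for $n\ge2$, so this case cannot occur.

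\emph{The main difficulty.} The delicate step is this last one: ruling out the transversal partitions genuinely uses the size bound, and one must compute the stabilizer carefully. The key observation is that a non-identity rotation of any single triangle necessarily moves one of the parts, so the $\Aut(G)$-stabilizer of a transversal partition is contained in the copy of $\Sym(n)$ permuting equally-labeled triangles; hence it is a Young subgroup of $\Sym(n)$ and the orbit has size at least $|\mathbb{Z}_3\wr\Sym(n)|/n!=3^n$. Everything else is routine bookkeeping with the intersection sizes $|T_i\cap P_j|\in\{0,1,3\}$.
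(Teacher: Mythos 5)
Your proof is correct and follows the same overall strategy as the paper's: first pin down the possible isomorphism types of the parts of $A$, then use an orbit-counting argument to exclude parts inducing edgeless graphs. You are, however, noticeably more thorough at two points. First, you actually prove the ``if'' direction, which the paper dispatches with ``this settles the claim.'' Second, you explicitly handle the transversal case in which every triangle is split into three singletons (so every part induces an $E_n$); the paper's case analysis only treats $G[P]\cong E_{n''}$ with $n''\in[n-1]$ and never returns to $n''=n$. Your stabilizer computation giving $|\mathcal{A}(A)|=3^n$ there is right. The one caveat applies equally to your argument and to the paper's: both rest on the upper bound $|\mathcal{A}(A)|\le 3n$, which you state up front but which, unlike in Lemma~\ref{lem: parts-C3}, is not actually a hypothesis of this lemma as written. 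This matters, because without the bound the ``only if'' direction fails: for the transversal partition the coloring $\vcol_G(A)$ (and likewise the coloring induced by any sequence from its orbit) turns $E_n\cdot\cthree$ into a color-disjoint union of families of isomorphic, discretely colored directed triangles, which is equivalent up to bichromatic symmetrization to copies of a discretely colored transitive tournament and hence ultrahomogeneous --- so $\mathcal{A}(A)$ is a genuine non-trivial ultrahomogeneous system with parts inducing $E_n$. Since the lemma is only ever invoked in a context where $|\mathcal{A}(A)|\le|B|\le|R|=3n$, the version you proved, with the size bound made explicit in the hypothesis, is the statement that should appear; your write-up makes this dependence visible where the paper's proof leaves it implicit.
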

\begin{proof}
Let $A$ be a non-trivial partition of $E_n \cdot \cthree$ and let $P$ be a part of $A$.
Observe that either $G[P] \cong E_{n'} \cdot \cthree$ for some $n' \in [n-1]$ or $G[P] \cong E_{n''}$ for some $n'' \in [n]$.

If $G[P] \cong E_{n''}$ for some $n'' \in [n-1]$, then there are $\binom{n}{n''}3^{n''}$ isomorphic copies of $G[P]$ in $G$.
Observe that $\binom{n}{n''}3^{n''} > 3n$ unless $n=2$ and $n''=1$. In this case $|P| = 1$ and hence, $A$ has at least four parts (the triangle which contains the vertex of $P$ must be partitioned into three vertices by Lemma~\ref{lemma: blocks-are-UH-subgraphs}) and the other triangle forms at least one more part. This is a contradiction to~$|A|\leq 3$.

We conclude that no part of $A$ induces an edgeless graph, that is, every part of $A$ induces a wreath product of an edgeless graph with a~$\cthree$. This settles the claim.
\end{proof}

\begin{lemma} \label{lem: part-h0}
The only ultrahomogeneous partition of $H_0$ into at most three parts is the trivial partition.
\end{lemma}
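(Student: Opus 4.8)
The plan is to exploit the rigidity of $H_0$. Recall that $H_0$ is vertex-transitive and $3$-out-regular on $8$ vertices with $\Aut(H_0)\cong\mathrm{SL}(2,3)$, that its non-adjacency relation is a perfect matching $M=\{B_1,B_2,B_3,B_4\}$ which is the unique non-trivial block system $\mathcal{B}$ of $\Aut(H_0)$, that $\Aut(H_0)^{\mathcal{B}}\cong\Alt(4)$, and that the kernel of $\Aut(H_0)\to\Aut(H_0)^{\mathcal{B}}$ has order $2$, generated by the automorphism $z$ which swaps the two vertices inside every block (all of this can also be read off the model of $H_0$ as $\mathrm{SL}(2,3)$ acting on the eight nonzero vectors of $\mathbb{F}_3^2$ with $u\to v$ exactly when $\det(u\mid v)=1$). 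A useful consequence is that for distinct blocks $B_i,B_j$ and $v\in B_i$ exactly one vertex of $B_j$ is an out-neighbour of $v$: indeed $v$ is adjacent to both vertices of $B_j$, and if both were out-neighbours then, applying $z$, both would also be out-neighbours of $\bar v$, so all four arcs between $B_i$ and $B_j$ would point from $B_i$ to $B_j$; this is impossible since $\Aut(H_0)^{\mathcal{B}}\cong\Alt(4)$ contains a double transposition interchanging $B_i$ and $B_j$, and the corresponding automorphism maps the arcs from $B_i$ to $B_j$ onto the arcs from $B_j$ to $B_i$. Now suppose $A=(P_1,\ldots,P_k)$ with $k\le 3$ is ultrahomogeneous, and let $\Gamma$ be the subgroup of $\Aut(H_0)$ stabilising every $P_i$ setwise, which is the automorphism group of $H_0$ coloured by $\vcol_{H_0}(A)$. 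Then $\Gamma$ is transitive on each $P_i$ and each $H_0[P_i]$ is a vertex-monochromatic ultrahomogeneous oriented graph, hence one of the graphs of Theorem~\ref{thm: lachlan_asymmetric}.

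The first step is to bound the part sizes. Since $M$ is a perfect matching, $H_0$ contains no induced $E_m$ with $m\ge 3$; an induced $\cfour$ has non-adjacent diagonals and therefore is a union of two blocks of $\mathcal{B}$; no ultrahomogeneous oriented graph on six vertices embeds as an induced subgraph ($E_6$ has no arc while any six vertices of $H_0$ span an arc; $E_2\cdot\cthree$ would contain a vertex with three non-neighbours; and $\cthree\cdot E_2$ would require all arcs between two blocks of $\mathcal{B}$ to be parallel, contradicting the consequence above); and no ultrahomogeneous oriented graph on five or seven vertices embeds either ($E_5$ and $E_7$ cannot). Together with Theorem~\ref{thm: lachlan_asymmetric} this forces $|P_i|\in\{1,2,3,4\}$, with size-$2$ parts being blocks and size-$4$ parts unions of two blocks. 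Hence for a non-trivial $A$ with $k\le 3$ the multiset of part sizes is $\{4,4\}$, $\{4,3,1\}$, $\{4,2,2\}$ or $\{3,3,2\}$.

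Three of these profiles are eliminated by counting inside $\mathrm{SL}(2,3)$. A vertex stabiliser has order $3$, a block stabiliser order $6$, the joint stabiliser of two distinct blocks order at most $2$ (it maps into the trivial subgroup of $\Alt(4)$), and a block stabiliser acts transitively on the six vertices outside that block (its order-$3$ subgroup rotates the other three blocks and $z$ fuses the two resulting orbits into one). For $\{4,3,1\}$ and $\{4,2,2\}$, $\Gamma$ lies in the stabiliser of the singleton part, respectively in the joint stabiliser of the two blocks forming the size-$2$ parts, so $|\Gamma|\le 3$, respectively $|\Gamma|\le 2$, yet $\Gamma$ must be transitive on a part of size $4$, which is impossible. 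For $\{3,3,2\}$, $\Gamma$ lies in the stabiliser of the size-$2$ block, of order $6$, and transitivity on both that part and a size-$3$ part forces $\Gamma$ to be this entire order-$6$ group, which is transitive on all six vertices outside the block and so cannot stabilise a size-$3$ part.

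The remaining and genuinely harder case is $\{4,4\}$; here transitivity of $\Gamma$ alone does not help, since a suitable subgroup of order $4$ of $\mathrm{SL}(2,3)$ acts transitively on each half. I would instead use the bichromatic viewpoint with $R$ and $B$ the two parts, both isomorphic to $\cfour$. Each $v\in B$ has its unique non-neighbour inside $B$, so $v$ is adjacent to all of $R$; and, applying the consequence established above to the two blocks (diagonals) of the $\cfour$ $R$, the set $N^R_+(v)$ consists of exactly one vertex from each diagonal, hence is an edge and not a diagonal of $R$. Therefore the induced ordered partition $A(v)=(N^R_+(v),N^R_-(v))$ splits $\cfour$ into two adjacent pairs; by Lemma~\ref{lem: parts_C4} --- or directly, because $\Aut(\cfour)\cong\mathbb{Z}_4$ preserves no such colouring, so $(\cfour,\vcol(A(v)))$ has trivial and thus non-transitive automorphism group --- the partition $A(v)$ is not ultrahomogeneous, so Condition~\ref{itm: uh refined nbhds} of Theorem~\ref{thm: general-extension} is violated, contradicting ultrahomogeneity of the colouring of $H_0$. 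Having discarded all four size profiles, only the trivial partition remains. The main obstacle is this $\{4,4\}$ case, which genuinely needs the general extension theorem rather than a group-order count, together with the bookkeeping required to establish the structural facts about $H_0$ used above.
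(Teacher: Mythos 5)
Your proof is correct, but it takes a genuinely different route from the paper's. The paper's argument is much shorter: it lists the induced ultrahomogeneous subgraphs of $H_0$ (namely $E_1$, $E_2$, $\cthree$, $\cfour$, and $H_0$), concludes that some part $P$ must induce a $\cthree$ or a $\cfour$, and then observes that for any second part $P'$ the bichromatic graph $H_0[P\cup P']$ with the parts as colour classes must itself be ultrahomogeneous and hence constrained by Lemmas~\ref{lem: parts_C4} and~\ref{lem: parts-C3}; the only such configuration occurring inside $H_0$ is $\cthree\rightleftarrows\cthree$, which is finally excluded because the remaining size-$2$ part together with one of the triangles does not induce an ultrahomogeneous graph. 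You instead enumerate the four admissible size profiles $\{4,4\}$, $\{4,3,1\}$, $\{4,2,2\}$, $\{3,3,2\}$, eliminate the last three by order counting in $\mathrm{SL}(2,3)$ relative to the non-adjacency matching (vertex stabiliser of order $3$, block stabiliser of order $6$ acting transitively on the six outside vertices, two-block stabiliser of order at most $2$), and reserve the extension-theorem machinery for the $\{4,4\}$ case alone, where the one-out-neighbour-per-block property forces $A(v)$ to split the $\cfour$ into two edges rather than two diagonals, violating Condition~\ref{itm: uh refined nbhds} of Theorem~\ref{thm: general-extension} via Lemma~\ref{lem: parts_C4}. Your version is more self-contained and makes fully explicit the structural facts about $H_0$ (the perfect matching of non-edges, its block structure, the orientation of arcs between blocks) on which the paper's rather terse step ``only one of these graphs appears as induced subgraph in $H_0$'' silently relies; the price is that you need concrete group-theoretic data for $\mathrm{SL}(2,3)$, whereas the paper reuses its already-established pairwise lemmas and never descends to explicit group orders. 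Both arguments are sound.
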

\begin{proof}
Let $A$ be a non-trivial ultrahomogeneous partition of $H_0$ into at most three parts.
The induced ultrahomogeneous subgraphs of $H_0$ are $E_1$, $E_2$, $\cthree$, $\cfour$, and $H_0$.
Since $|V(H_0)| = 8$ and $A$ has at most three parts we obtain that at least one part $P$ of $A$ induces a $\cthree$ or a $\cfour$.
Let $P'$ be a part of $A$ distinct from $P$.
Since~$A$ is an ultrahomogeneous partition the graph $D \coloneqq H_0[P \cup P']$ with a vertex coloring such both parts $P$ and $P'$ are a color class of $D$ is ultrahomogeneous.
In particular, $D$ is among the graphs of Lemma~\ref{lem: parts_C4} and Lemma~\ref{lem: parts-C3}.
Only one of these graphs appears as induced subgraph in $H_0$, namely the graph which contains two parts $P$ and $P'$ that both induce $\cthree$ and $H_0[P\cup P']$ with the $\chi_G(A)$ coloring is isomorphic to the graph on the right in Figure~\ref{fig: blowup}. However, then the graph induced by $P$ and $V(H_0) \setminus (P\cup P')$ is not ultrahomogeneous, which contradicts the assumption.
Thus $A$ is trivial.
\end{proof}

\begin{corollary}\label{coro: h0}
If $G$ is an ultrahomogeneous oriented graph on two vertex color classes~$R$ and~$B$ such that $G[R] \cong H_0$,
then $R$ and $B$ are homogeneously connected.
\end{corollary}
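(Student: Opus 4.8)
\textbf{Proof proposal for Corollary~\ref{coro: h0}.}

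The plan is to reduce to the general extension theorem (Theorem~\ref{thm: general-extension}) and then invoke Lemma~\ref{lem: part-h0} to kill every possible non-homogeneous connection. First I would suppose for contradiction that $G$ is ultrahomogeneous but $R$ and $B$ are not homogeneously connected. Pick any $b \in B$. Since $G$ is oriented, the partition $A(b)$ obtained from $(N^R_+(b), N^R_-(b), R \setminus N^R(b))$ by deleting empty parts has at most three parts; and since $R$ and $B$ are not homogeneously connected, $A(b)$ is not the trivial partition, so $2 \le |A(b)| \le 3$. By Theorem~\ref{thm: general-extension}\eqref{itm: uh refined nbhds} the set $\mathcal{A} = \{A(b) : b \in B\}$ is an ultrahomogeneous system of partitions of $G[R] \cong H_0$; in particular each $A(b)$ is itself an ultrahomogeneous partition of $H_0$ with at most three parts.

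The crux is then immediate: Lemma~\ref{lem: part-h0} states that the only such partition of $H_0$ is the trivial one, contradicting $2 \le |A(b)|$. Hence no $b \in B$ can witness a non-homogeneous connection, i.e.\ for every $b$ both ${\ecol_G}|_{\{b\} \times R}$ and ${\ecol_G}|_{R \times \{b\}}$ are constant. It remains only to upgrade ``constant for each fixed $b$'' to ``$R$ and $B$ are homogeneously connected'', i.e.\ to the statement that ${\ecol_G}|_{B \times R}$ and ${\ecol_G}|_{R \times B}$ are globally constant. This follows from ultrahomogeneity of $G$: if two vertices $b, b' \in B$ had $A(b) \ne A(b')$ (as labelled partitions, i.e.\ different outgoing/incoming colors toward $R$), then since all these partitions are trivial they can only differ by which constant color they use; but then no automorphism of $G$ could map $b$ to $b'$, while the map sending $b \mapsto b'$ and fixing all other vertices of a suitable small induced substructure would be a partial isomorphism only if the colors agree — more directly, $\Aut(G[R]) = \Aut(H_0)$ is transitive, so applying extensions of automorphisms of $G[R]$ forces the constant color from $b$ to $R$ to be the same for all $b$, and symmetrically for the reverse direction.

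I expect the only genuine content is the appeal to Lemma~\ref{lem: part-h0}; everything else is bookkeeping with the general extension theorem and the transitivity of $\Aut(H_0)$. The mild subtlety — the ``main obstacle'', such as it is — is making the last step rigorous without circularity: one must be careful that ``homogeneously connected'' in the sense of the \textbf{Connectivity types} paragraph requires global constancy of the four (here two) relevant restrictions, and argue that a non-homogeneous but blockwise-trivial connection (e.g.\ some $b$'s point entirely into $R$ while others receive entirely from $R$) is also excluded. This is handled by noting that such a configuration would make $A(b)$ for the ``all out'' vertices the trivial partition with color $1$ and for the ``all in'' vertices the trivial partition with color $2$, so these are genuinely different as labelled partitions in $\mathcal{A}$; but Theorem~\ref{thm: general-extension}\eqref{itm: block-group} together with the discreteness forced on $\mathcal{B}$ by $|\mathcal{A}| \le |B| \le |R| = 8$ and the structure of $\Aut(H_0) = \mathrm{SL}(2,3)$ — which acts on the relevant set, and whose only option would be an improper one — yields the contradiction. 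Alternatively, and more cleanly, one simply observes that two vertices of $B$ with distinct trivial labelled partitions cannot lie in a common $\Aut(G)$-orbit, contradicting that $G[B]$ is vertex-monochromatic and ultrahomogeneous, hence vertex-transitive.
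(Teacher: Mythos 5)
Your proposal is correct and follows exactly the paper's own route: the paper's proof is literally ``an immediate consequence of Lemma~\ref{lem: part-h0} and Theorem~\ref{thm: general-extension}'', and you correctly identify that the orientedness bound $|A(b)|\leq 3$ plus Lemma~\ref{lem: part-h0} forces every $A(b)$ to be trivial. Your final, clean argument for upgrading per-vertex constancy to global homogeneous connectedness (two vertices of $B$ with different trivial labelled partitions could not be swapped by any automorphism, contradicting ultrahomogeneity) is the right way to fill in the detail the paper leaves implicit; the earlier detour through $\mathrm{SL}(2,3)$ and the unwarranted assumption $|B|\leq|R|$ is unnecessary.
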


\begin{proof}
This is an immediate consequence of Lemma~\ref{lem: part-h0} and Theorem~\ref{thm: general-extension}.
\end{proof}

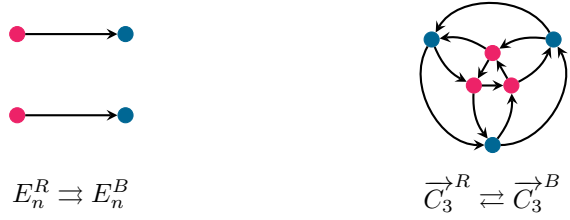
\begin{figure} 
\centering
\begin{tikzpicture}[scale=.8]
	\begin{scope}[scale=.9]
		\node[xsvertex, WildStrawberry] (r1) at (-1,-0.75) {};
		\node[xsvertex, WildStrawberry] (r2) at (-1,0.75) {};
		\node[xsvertex, MidnightBlue] (b1) at (1,-0.75) {};
		\node[xsvertex, MidnightBlue] (b2) at (1,0.75) {};
		\draw[harc] (r1)edge(b1) (r2)edge(b2);
		\node[] (match) at (0,-2.2) {$E_n^R\rightrightarrows E_n^B$};
	\end{scope}
	
	\begin{scope}[shift={(7,0)}, scale=.9]
		\def\irad{.4cm};
		\def\vxnumber{3}
		\def\angle{360/\vxnumber}
		\foreach \i in {1,...,\vxnumber}{
			\node[xsvertex, WildStrawberry] (i\i) at (270-\angle/2+\i*\angle:\irad) {};
		}
		\draw[harc] (i1) edge (i2) (i2) edge (i3) (i3) edge (i1);
		\def\orad{1.3cm};
		\foreach \i in {1,...,\vxnumber}{
			\node[xsvertex, MidnightBlue] (o\i) at (210-\angle/2+\i*\angle:\orad) {};
		}
		\draw[harc, bend right = 60] (o1) edge (o2) (o2) edge (o3) (o3) edge (o1);
		\draw[harc, bend right = 20] (i3) edge (o1) (o2) edge (i2);
		\draw[harc, bend right = 20] (i1) edge (o2) (o3) edge (i3);
		\draw[harc, bend right = 20] (i2) edge (o3) (o1) edge (i1);
		\node[] (cthree) at (0,-2.2) {$\cthree^R \rightleftarrows \cthree^B$};
		
	\end{scope}
\end{tikzpicture}
\caption{Up to equivalence all ultrahomogeneous oriented bichromatic graphs can be obtained from blow-ups of graphs of the above form.}\label{fig: blowup}
\end{figure}

Denote by~${E_n^R}\rightrightarrows{E^B_n}$ the bichromatic graph whose color classes induce a red and a blue~$E_n$ and with a perfect matching directed from red to blue (Figure~\ref{fig: blowup} left). Also denote by $\cthree^R \rightleftarrows \cthree^B$ the ultrahomogeneous oriented  graph depicted in Figure~\ref{fig: blowup} on the right non-trivially joining two directed triangles with a directed 6-Cycle~$\overrightarrow{C_6}$.

For blow-ups of bichromatic graphs~${E_n^R}\rightrightarrows{E^B_n}$ we introduce a specialized notation. We use the notation~${E_n^R}\rightrightarrows{E^B_n}\uparrow \mathcal{C}$ for a set of CCDs~$\mathcal{C}$ to indicate that one or possibly two of the sides can be blown up to one of the graphs in~$\mathcal{C}$. Similarly we use the notation~$\cthree^R \rightleftarrows \cthree^B\uparrow \mathcal{C}$.

\begin{theorem}\label{bichromatic:theorem}
Let $G$ be an oriented bichromatic graph. Then $G$ is ultrahomogeneous if and only if 
it is equivalent (up to color changes and (inverse) bichromatic symmetrizations) to one of following graphs (see Figures~\ref{fig: R_c4} and~\ref{fig: blowup}):

\begin{enumerate}
	\item $G_1 \mathbin{\square} G_2$ with~$G_1,G_2\in \{\cfour, H_0,E_n,E_n\cdot \cthree,\cthree\cdot E_n\}$ (disjoint union),
	\item ${E_n^R}\rightrightarrows {E^B_n}\uparrow \{E_n\cdot \cthree\colon n \in \mathbb{N}_{>1} \}$ (matching, possibly blown up to~$E_n \cdot \cthree$),
	\item ${E_2^R}\rightrightarrows {E^B_2}\uparrow \{E_2\!\cdot\! \cthree,\cfour\}$ (matching, possibly blown up to~$E_2\cdot \cthree$ or to~$\cfour$), or
	\item $\cthree^R \rightleftarrows \cthree^B\uparrow \{\cthree \cdot E_{n}\colon n\in \mathbb{N}_{>1}\}$ (special triangle connection, possibly blown up to~$\cthree \cdot E_n$).
\end{enumerate}
\end{theorem}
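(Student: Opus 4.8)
The plan is to reduce the classification of bichromatic ultrahomogeneous oriented graphs to a case analysis over the possibilities for $G[R]$, where we assume without loss of generality $|R| \geq |B|$. By Part~\ref{itm: induced UH} of the general extension theorem (Theorem~\ref{thm: general-extension}), if $G$ is ultrahomogeneous then $G[R]$ is vertex-monochromatic ultrahomogeneous, hence one of $\cfour$, $E_n$, $E_n\cdot\cthree$, $\cthree\cdot E_n$, or $H_0$ by Lachlan's classification (Theorem~\ref{thm: lachlan_asymmetric}). The corollaries already proved in this section dispatch most of these cases: if $R$ and $B$ are homogeneously connected, then $G$ is (up to edge color changes) the color disjoint union $G[R]\mathbin{\square}G[B]$, and Lemma~\ref{lem: color-disjoint union} together with the fact that $G[B]$ must again be monochromatic ultrahomogeneous gives item~1. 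So the main work is the case where $R$ and $B$ are \emph{not} homogeneously connected.

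In that non-homogeneous case, I would proceed through the corollaries. Corollary~\ref{coro: h0} shows $G[R]\cong H_0$ cannot occur. Corollary~\ref{coro: c4} handles $G[R]\cong\cfour$: the only ultrahomogeneous system of partitions is the split into two independent $2$-sets with induced group $\mathbb{Z}_2$ (Lemma~\ref{lem: parts_C4}), and consulting Table~\ref{tab:my-table} the only monochromatic graphs on at most $4$ vertices with a block system on which $\mathbb{Z}_2$ acts are $E_2$ and $\cfour$; these yield the two graphs of Figure~\ref{fig: R_c4}, which via Theorem~\ref{thm:min-ext-thm} and Theorem~\ref{thm:all:blow:ups:of:orient:and:unique} are exactly ${E_2^R}\rightrightarrows{E_2^B}$ blown up to $\cfour$ (and the plain matching), giving item~3. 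Corollary~\ref{coro: en} handles $G[R]\cong E_n$: then $A(b)$ has two parts, one a singleton, the induced group is $\Sym(n)$, and Table~\ref{tab:my-table} forces $G[B]\cong E_n$ with $R$ and $B$ matching-connected—this is the graph ${E_n^R}\rightrightarrows{E_n^B}$, the base case of items~2 and~3.

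The remaining cases are $G[R]\cong E_n\cdot\cthree$ and $G[R]\cong\cthree\cdot E_n$, which I expect to be the main obstacle since they are not fully settled by a single earlier corollary. For $G[R]\cong\cthree\cdot E_n$, Lemma~\ref{lem: parts-C3} says the only nontrivial ultrahomogeneous system of partitions is the partition into three blue-copies of $E_n$ with induced group $\mathbb{Z}_3$; by Table~\ref{tab:my-table} the only candidate for $G[B]$ is $\cthree\cdot E_m$ with the $\mathbb{Z}_3$ action on its block system of $E_m$'s, and a counting/compatibility check via Theorem~\ref{thm: general-extension} (conditions~\ref{itm: block-group} and~\ref{itm: easygoing}, the latter using Definition~\ref{definition:blow:up}) pins down $m=n$ and the $\overrightarrow{C_6}$ connection pattern—this is $\cthree^R\rightleftarrows\cthree^B$, possibly blown up to $\cthree\cdot E_n$ by Lemma~\ref{lem:blow:ups} and Theorem~\ref{thm:all:blow:ups:of:orient:and:unique}, giving item~4. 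For $G[R]\cong E_n\cdot\cthree$, Lemma~\ref{lem: parts en cthree} shows every part of an ultrahomogeneous partition induces $E_{n'}\cdot\cthree$; one observes that such a graph is itself a blow-up of $E_n$ by $\cthree$ (Theorem~\ref{thm:all:blow:ups:of:orient:and:unique}), so by Lemma~\ref{lem:blow:ups} the bichromatic ultrahomogeneous graphs with this color class are precisely easygoing blow-ups of ${E_n^R}\rightrightarrows{E_n^B}$ (or ${E_2^R}\rightrightarrows{E_2^B}$) with the red side, the blue side, or both replaced by $E_n\cdot\cthree$, giving items~2 and~3.

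Finally, for the $(\Leftarrow)$ direction I would verify that each listed graph is ultrahomogeneous: item~1 by Lemma~\ref{lem: color-disjoint union}; the matchings and the triangle connection $\cthree^R\rightleftarrows\cthree^B$ by checking the five conditions of Theorem~\ref{thm: general-extension} directly (or recognizing them as minimal extensions via Theorem~\ref{thm:min-ext-thm}); and each blow-up by Lemma~\ref{lem:blow:ups}, noting that all the blow-ups appearing—$E_n\cdot\cthree$ over $E_n$ and $\cthree\cdot E_n$ over $\cthree$—are easygoing, while $\cfour$ over $E_2$ is the single non-homogeneous but still easygoing case flagged after Theorem~\ref{thm:all:blow:ups:of:orient:and:unique}. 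The main obstacle I anticipate is the $E_n\cdot\cthree$ and $\cthree\cdot E_n$ analysis: one must be careful that \emph{iterating} blow-ups and connections does not produce new graphs outside the list, which is where Theorem~\ref{thm:all:blow:ups:of:orient:and:unique}'s uniqueness statement and the easygoing hypothesis in Lemma~\ref{lem:blow:ups} are doing the essential work.
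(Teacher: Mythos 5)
Your overall architecture matches the paper's: assume $|R|\geq|B|$, run through the five possibilities for $G[R]$ given by Theorem~\ref{thm: lachlan_asymmetric}, and dispatch the cases $H_0$, $\cfour$, $E_n$, and $\cthree\cdot\nolinebreak E_n$ via Corollaries~\ref{coro: h0}, \ref{coro: c4}, \ref{coro: en} and Lemma~\ref{lem: parts-C3}, with the converse handled by Theorem~\ref{thm:min-ext-thm} and Lemma~\ref{lem:blow:ups}. This is exactly what the paper does. (One small inaccuracy: in the $\cthree\cdot E_n$ case the paper only concludes $G[B]\cong\cthree\cdot E_{n'}$ for some $n'\in[n]$; the two sides of $\cthree^R\rightleftarrows\cthree^B$ need not be blown up by the same $E_n$, so ``pins down $m=n$'' overstates the conclusion, though it does not change the final list.)

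The genuine gap is in the case $G[R]\cong E_n\cdot\cthree$. Lemma~\ref{lem: parts en cthree} does give you that every part of $A(b)$ is a union of whole triangles, so $G$ is a blow-up of a bichromatic graph $G'$ with $G'[R]\cong E_n$. But your plan is then to classify $G'$ via Corollary~\ref{coro: en} (equivalently Lemma~\ref{lem: parts-En}), and both of these rely on the chain $|\mathcal{A}(A(b))|\leq|\mathcal{B}|\leq|B|\leq|R|$: after contracting the red triangles the red class has only $n$ vertices while $|B|$ may be as large as $3n$, so the standing assumption $|R|\geq|B|$ is lost and the reduction does not go through as stated. Concretely, your argument never excludes a blue vertex $b$ whose out-neighbourhood and in-neighbourhood each consist of at least two whole triangles (a two-part $A(b)$ with both parts of size at least~$2$), nor three-part partitions other than the case $n=3$ with singleton triangle parts. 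The paper closes these cases by counting $|\mathcal{A}(A(b))|$ against the uncontracted bound $3n=|R|$, and, for the case $\min_i n_i\geq 2$, by exhibiting four partitions $A_1,\dots,A_4$ in the orbit whose first parts are disjoint, respectively intersecting, which forces two distinct symmetric edge colors among the corresponding blue vertices --- impossible in an oriented graph, which admits only one symmetric relation. Some argument of this kind (or a repair of the contraction step that simultaneously controls the blue side) is needed to complete your proof.
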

\begin{proof}
Let $G$ be an ultrahomogeneous oriented graph on precisely two vertex color classes $R$ and $B$.
We may assume without loss of generality that $|R| \geq |B|$.
By Theorem~\ref{thm: lachlan_asymmetric} each of the graphs $G[R]$ and $G[B]$ is isomorphic to one of the following graphs:
$\cfour$, $H_0$, $E_n$ for some $n \in \mathbb{N}_{\geq 1}$, $\cthree \cdot E_n$ for some $n \in \mathbb{N}_{\geq 1}$, or $E_n \cdot \cthree$ for some $n \in \mathbb{N}_{\geq 1}$.

If $G[R] \cong \cfour$, then by Corollary~\ref{coro: c4} either $R$ is homogeneously connected to~$B$ or $G$ is among the two graphs of Figure~\ref{fig: R_c4}. Observe that both of these graphs are blow-ups of the graph ${E_2^R}\rightleftarrows {E^B_2}$ (left in Figure~\ref{fig: blowup}).

If $G[R] \cong H_0$, then $R$ is homogeneously connected to $B$ by Corollary~\ref{coro: h0}.

If $G[R] \cong E_n$ for some $n \in \mathbb{N}_{\geq 1}$, then by Corollary~\ref{coro: en} either $R$ and $B$ are homogeneously connected or $G[B] \cong E_n$ and $R$ and $B$ are matching-connected.

If $G[R] \cong \cthree \cdot E_n$ for some $n \in \mathbb{N}_{\geq 1}$, then by Lemma~\ref{lem: parts-C3} either $R$ and $B$ are homogeneously connected or for each vertex $b \in B$ we have $A(b) = (P_1, P_2, P_3)$ with $P_i \cong E_n$ for $i \in [3]$.
Observe that $\mathbb{Z}_3$ acts on $\mathcal{A}(A(b))$.
From Theorem~\ref{thm: general-extension} and Table~\ref{tab:my-table} we obtain $G[B] \cong \cthree \cdot E_{n'}$ for some $n' \in [n]$. From the structure of $A(b)$ we obtain that $G$ is a blow-up of the graph on the right in Figure~\ref{fig: blowup}.

If $G[R] \cong E_n \cdot \cthree$ for some $n \in \mathbb{N}_{\geq 1}$, then by Lemma~\ref{lem: parts en-c3} either~$R$ and~$B$ are homogeneously connected or for each $b \in B$ every part $P$ of $A(b)$ is isomorphic to $E_{n'}\cdot\cthree$ for some $n' \in [n-1]$.
If $A(b)$ has three parts $P_1$, $P_2$, and $P_3$ such that $P_i \cong E_{n_i}\cdot \cthree$, then $|\mathcal{A}(A(b))| > 3n$ unless $n=3$ and $n_1 = n_2 = n_3 = 1$.
The corresponding permutation group admitted by the set $\mathcal{A}(A(b))$ is $\Sym(3)$.
According to Theorem~\ref{thm: general-extension} and Table~\ref{tab:my-table}, $G[B] \cong E_3$ or $G[B] \cong E_3 \cdot \cthree$. In the first case, $R\times B$ is a perfect matching of a red $E_3$ with a blue $E_3$, and the second case is a blow-up (up to edge color changes) of this one.
The last remaining case is that $A(b) = (P_1, P_2)$ with $P_i \cong E_{n_i}\cdot \cthree$ for $i \in [2]$. 
Set $n_i = |P_i|$ for $i \in [2]$.
If $\min_{i \in [2]}  n_i =1$, then $|\mathcal{A}(A(b))| = n$ with $\Sym(n)$ as the corresponding permutation group.
We obtain $G[B] \cong E_n$ or $G[B] \cong E_n\cdot \protect\cthree$. In the first case, $R\times B$ is a perfect matching of a red $E_n$ with a blue $E_n$, and the second case is (up to edge color changes) a blow-up of this one.
If $\min_{i \in [2]}n_i \geq 2$, then there are partitions $A_1, A_2, A_3$, and $A_4$ in $\mathcal{A}(A(b))$ such that $\pi_1(A_1) \cap \pi_1(A_2) = \emptyset$ and $\pi_1(A_3) \cap \pi_1(A_3) =1$.
In particular $\ecol_G(b_{A_2}, b_{A_1}) = \ecol_G(b_{A_1}, b_{A_2}) \neq \ecol_G(b_{A_3} b_{A_4}) )= \ecol_G(b_{A_4}, b_{A_3})$, which is a contradiction since oriented graphs only admit one symmetric relation (the non-edges).

To check that the listed graphs are ultrahomogeneous we can apply Theorems~\ref{thm:min-ext-thm} and~\ref{lem:blow:ups}.
\end{proof}

\section{The ultrahomogeneous vertex-colored o\-ri\-en\-ted graphs}\label{sec: final classif}

In this section we consider more than two vertex colors and finish the classification. Let us first analyze blow-ups.

\begin{lemma}\label{lem:blow:ups:are:consistent}
If a color class~$R$ in an ultrahomogeneous graph~$G$ induces a graph $G[R]$ that is a non-trivial blow-up, then~$G$ itself is a non-trivial blow-up to~$R$.
\end{lemma}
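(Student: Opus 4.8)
\emph{Overview.} The plan is to lift the block system that exhibits $G[R]$ as a blow-up to a block system of $G$ that exhibits $G$ as a blow-up to $R$. First I would record the shape of $G[R]$: since $R$ is a color class of the ultrahomogeneous $G$, the graph $G[R]$ is vertex-monochromatic and ultrahomogeneous, hence one of the graphs of Theorem~\ref{thm: lachlan_asymmetric}; and as it is a non-trivial blow-up, Theorem~\ref{thm:all:blow:ups:of:orient:and:unique} leaves only $G[R]\cong\cthree\cdot E_n$ or $E_n\cdot\cthree$ with $n\ge 2$, or $G[R]\cong\cfour$. In each case $\Aut(G[R])$ has a unique non-trivial block system $\mathcal B$ (the three $E_n$'s, the $n$ triangles, the two diagonals; see Table~\ref{tab:my-table}), it is necessarily the one witnessing the blow-up, and $\Aut(G[R])^{\mathcal B}=\Aut(K)$ for the blueprint $K\in\{\cthree,E_n,E_2\}$. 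Every $\psi\in\Aut(G)$ fixes $R$ setwise, and $\{\psi|_R:\psi\in\Aut(G)\}=\Aut(G[R])$ by ultrahomogeneity; hence $\psi$ permutes the members of $\mathcal B$, the partition $\mathcal B^{+}$ of $V(G)$ consisting of the members of $\mathcal B$ together with all singletons $\{v\}$ with $v\notin R$ is $\Aut(G)$-invariant with every part a block, and $\Aut(G)$ induces $\Aut(G[R])^{\mathcal B}$ on $\overline R:=\mathcal B$.

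\emph{Main step.} The only real obstacle will be to show that every $B\in\mathcal B$ is homogeneously connected to every $v\in V(G)\setminus R$. To do this I would fix such a $B$, let $C$ be the color class of $v$, and apply the bichromatic classification (Theorem~\ref{bichromatic:theorem}) to the ultrahomogeneous bichromatic graph $G[R\cup C]$. If $R$ and $C$ are homogeneously connected there is nothing to prove. Otherwise, as $G[R]$ is one of $\cthree\cdot E_n$, $E_n\cdot\cthree$, $\cfour$, the graph $G[R\cup C]$ is (up to equivalence) a matching graph or the special triangle graph of Theorem~\ref{bichromatic:theorem} in which the color class $G[R]$ appears as a blown-up side $E_n\cdot\cthree$, $E_2\cdot\cthree$, $\cfour$, or $\cthree\cdot E_n$. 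The blocks of the unique non-trivial block system $\mathcal B$ of $G[R]$ are then precisely the sets replacing single vertices of $E_n^{R}$, $E_2^{R}$, resp.\ $\cthree^{R}$; by Definition~\ref{definition:blow:up} the edges from such a set to the other color class are inherited, through $\widehat\tau$, from edges of the un-blown-up bichromatic graph and are therefore constant on the set. Hence $B$ is homogeneously connected to $C$, and so to $v$. (Alternatively, one avoids the classification: for $c\in C$ the neighborhood partition $A(c)$ of $R$ has at most three parts, and by Theorem~\ref{thm: general-extension} the set $\mathcal A(A(c))$ is an ultrahomogeneous system of partitions of $G[R]$, so Lemmas~\ref{lem: parts_C4}, \ref{lem: parts-C3}, and~\ref{lem: parts en-c3} force every part of $A(c)$ to be a union of members of $\mathcal B$, which again places $B$ inside a single part of $A(c)$.)

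\emph{Assembling the blow-up.} Finally I would set up the outer graph. Let $\overline G$ be the CCD on vertex set $\mathcal B^{+}$ that inherits the vertex colors of $G$, inherits the edge color of $G$ between any two parts of which at least one is a singleton (well defined by the main step), and, under an identification of $\mathcal B$ with $V(K)$ for which $\Aut(\overline G[\overline R])=\Aut(K)=\Aut(G[R])^{\mathcal B}$, carries the edge coloring of $K$ on the color class $\overline R=\mathcal B$. Then the identity on $\mathcal B$ is a permutational isomorphism $\tau$ from $\Aut(G[R])^{\mathcal B}$ to $\Aut(\overline G[\overline R])$, and a direct comparison with Definition~\ref{definition:blow:up}—of vertex colors, and of the three kinds of edges: inside $R$ (copied from $G[R]$ in both), crossing $R$ (equal by the main step), and outside $R$—shows $G=\overline G[\,G[R]\rightarrow_{\tau}\overline R\,]$. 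As $\mathcal B$ is a non-trivial block system, this is a non-trivial blow-up to $R$, as claimed. The only point requiring care here is that the edge coloring of $\overline G$ inside $\overline R$ is never used in recovering $G$ (every edge of $G$ inside $R$ comes from $G[R]$), which is exactly why we are free to choose it to be $K$ and thereby realize the permutational isomorphism $\tau$.
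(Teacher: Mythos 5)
Your proof is correct and takes essentially the same route as the paper's (much terser) argument: the paper likewise handles each pair of color classes via the bichromatic classification (Theorem~\ref{bichromatic:theorem}) and invokes the uniqueness of the blow-up representation (Theorem~\ref{thm:all:blow:ups:of:orient:and:unique}) to make the block system consistent across all color classes. Your write-up simply makes explicit the quotient construction and the homogeneous-connection check that the paper leaves implicit.
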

\begin{proof}
First observe that Theorem~\ref{bichromatic:theorem} shows the statement for the case of bichromatic graphs.
To conclude the proof of the lemma recall that Theorem~\ref{thm:all:blow:ups:of:orient:and:unique} shows that an oriented ultrahomogeneous graph arises in at most one way as a blow-up. (In particular the block systems of the blow-up agree regarding connections to all color classes.)
\end{proof}

Denote by~$E_n^{R_1}\rightrightarrows E_n^{R_2}\rightrightarrows \cdots \rightrightarrows E_n^{R_t}$ the
graph that consists of $k$ isomorphic copies of a transitive tournament on $t$ vertices with a discrete vertex coloring.
Denote by~$\cthree^{R_1} \rightleftarrows\cthree^{R_2} \rightleftarrows\cdots \rightleftarrows  \cthree^{R_t}$ the CCD graph with vertex colors~$R_1, R_2,\ldots,R_t$ for which each pair~$R_i,R_j$ induces~$\cthree \rightleftarrows\cthree$.
(Up to color changes
and (inverse) bichromatic symmetrizations
this graph is unique for each~$t$.)

Consistent with our previous notation, by~$G\uparrow \{H_1,\ldots\!,H_t\}$ we denote all graphs obtained from~$G$ by blowing up an arbitrary number of color classes of~$G$ to one of the graphs in~$\{H_1,\ldots,H_t\}$.

\begin{theorem}\label{thm: last thm}
If $G$ is an ultrahomogeneous vertex-colored oriented graph, then $G$ is equivalent (up to color changes
and (inverse) bichromatic symmetrization) to the color-disjoint union of the following graphs:
\begin{enumerate}
	\item $H_0$ (monochromatic),
	\item $E_n^{R_1}\rightrightarrows E_n^{R_2}\rightrightarrows \cdots \rightrightarrows E_n^{R_t}\uparrow \{E_n \cdot \cthree\colon n\in \mathbb{N}_{>1}\}$
	\item $E_2^{R_1}\rightrightarrows E_2^{R_2}\rightrightarrows \cdots \rightrightarrows E_2^{R_t}\uparrow \{E_2\!\cdot\! \cthree,\cfour\}$
	
	\item $\cthree^{R_1} \rightleftarrows\cthree^{R_2} \rightleftarrows\cdots  \rightleftarrows\cthree^{R_t}\uparrow \{\cthree \cdot E_{n}\colon n\in \mathbb{N}_{>1}\}$
\end{enumerate}
\end{theorem}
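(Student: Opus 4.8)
The strategy is to bootstrap from the bichromatic classification (Theorem~\ref{bichromatic:theorem}) to the multichromatic case by analyzing how the vertex color classes interact pairwise and then showing that these pairwise constraints force the global structure claimed. First I would fix an ultrahomogeneous vertex-colored oriented graph $G$ with color classes $R_1, \dots, R_m$. For each $i$, the subgraph $G[R_i]$ is monochromatic ultrahomogeneous, hence one of the graphs in Theorem~\ref{thm: lachlan_asymmetric}. For each pair $i \neq j$, the induced bichromatic graph $G[R_i \cup R_j]$ is ultrahomogeneous, so by Theorem~\ref{bichromatic:theorem} the pair $R_i, R_j$ is either homogeneously connected, matching-connected (the $E_n \rightrightarrows E_n$ situation, possibly blown up), or joined via a $\cthree \rightleftarrows \cthree$ pattern (possibly blown up). The key combinatorial fact is that ``non-homogeneously connected'' is essentially an equivalence-like relation on the color classes that glues them into the building blocks, and across different blocks everything is homogeneously connected, which by Lemma~\ref{lem: color-disjoint union} (after an edge color change to a fresh color) exhibits $G$ as a color-disjoint union.

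The main work is to show that within a single ``non-homogeneous component'' the pairwise patterns are globally consistent and give exactly one of the four listed families. I would proceed as follows. (i) First reduce to the case without blow-ups using Lemma~\ref{lem:blow:ups:are:consistent}: if some $G[R_i]$ is a non-trivial blow-up then $G$ itself is a blow-up at $R_i$, and by Theorem~\ref{thm:all:blow:ups:of:orient:and:unique} the only oriented blow-ups are $\cthree \cdot E_n$ over $\cthree$, $E_n \cdot \cthree$ over $E_n$, and $\cfour$ over $E_2$; undoing all such blow-ups (which by the uniqueness statement is canonical and compatible across all color classes) we may assume each $G[R_i]$ is $\cfour$, $H_0$, $E_n$, or a discretely-colorable tournament piece — actually after the blow-up reduction each color class induces $E_1$, $\cfour$, or $H_0$, since $\cthree$, $E_n\cdot\cthree$, $\cthree\cdot E_n$, $E_n$ all arise as blow-ups except when they are already minimal, and one checks the minimal ones are $E_1$, $\cthree$... — here I'd be careful to track which monochromatic graphs are blow-up-irreducible. (ii) Next, show that $H_0$ and $\cfour$ color classes can only be homogeneously connected to everything else: for $H_0$ this is Corollary~\ref{coro: h0}; for $\cfour$, Corollary~\ref{coro: c4} says the only non-homogeneous connection is to an $E_2$ or another $\cfour$, and after de-blowing-up the $\cfour$ becomes an $E_2$, so a genuine $\cfour$ color class sits in its own block, yielding family~1. (iii) For the $E_n$ classes: Corollary~\ref{coro: en} shows any two non-homogeneously-connected $E_n$-classes must be matching-connected and of the same size $n$. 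I would then argue that matching-connectedness is transitive up to consistency: given $R_1, R_2, R_3$ pairwise matching-connected, the directions of the three matchings must be acyclic (a directed triangle among the matchings would violate the oriented condition or fail ultrahomogeneity), forcing, after bichromatic symmetrization to normalize directions, the transitive-tournament pattern $E_n^{R_1} \rightrightarrows \cdots \rightrightarrows E_n^{R_t}$; re-applying the allowed blow-ups gives families~2 and~3. (iv) For the $\cthree$ classes: by the bichromatic analysis the only non-homogeneous connection of a $\cthree$-class is the $\cthree \rightleftarrows \cthree$ pattern; I'd show pairwise these are globally consistent (using that $\cthree \rightleftarrows \cthree$ is unique up to equivalence and its automorphism group acts in a unique way), giving $\cthree^{R_1} \rightleftarrows \cdots \rightleftarrows \cthree^{R_t}$, and re-blowing-up yields family~4. (v) Finally, confirm no block mixes an $E_n$-type with a $\cthree$-type or with $H_0$/$\cfour$: this follows because the bichromatic theorem lists no ultrahomogeneous graph non-homogeneously joining, say, an $E_n$ to a $\cthree$ unless via a common blow-up, which is excluded after step (i).

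**Where the difficulty lies.** The routine parts are invoking the bichromatic classification pairwise and the color-disjoint-union lemma for the homogeneously-connected ``glue.'' The genuinely delicate step is \textbf{global consistency within a block} — verifying that pairwise information determines the whole structure. Concretely: for the $E_n$ family one must rule out ``non-transitive'' systems of matchings, and for the $\cthree$ family one must check that all the $\cthree \rightleftarrows \cthree$ connections can be simultaneously realized and that the resulting graph is indeed ultrahomogeneous, not merely pairwise ultrahomogeneous. I expect to handle both via the general extension theorem (Theorem~\ref{thm: general-extension}) applied to one color class versus the union of the others: one sets $R$ to be the union of all-but-one class and $B$ the remaining class, computes the refined-neighborhood partitions $A(b)$, and checks that the induced automorphism groups and block systems match. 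The easygoing condition (Condition~\ref{itm: easygoing}) is automatic here since the relevant block systems on the $B$-side are discrete or have simple structure. The last obligation — that the four listed families are in fact ultrahomogeneous — is discharged by iterated application of the minimal extension theorem (Theorem~\ref{thm:min-ext-thm}) together with Lemma~\ref{lem:blow:ups} for the blow-up closure, exactly as in the final line of the proof of Theorem~\ref{bichromatic:theorem}.
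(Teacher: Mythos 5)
Your high-level strategy coincides with the paper's: reduce to blow-up-irreducible color classes via Lemma~\ref{lem:blow:ups:are:consistent} and Theorem~\ref{thm:all:blow:ups:of:orient:and:unique}, use Theorem~\ref{bichromatic:theorem} pairwise to constrain how classes connect, and then argue global consistency within each non-homogeneously-connected component. However, the two places where you yourself locate the delicate work are not handled correctly, and these are genuine gaps. For the $E_n$ classes, you propose to rule out a ``directed triangle among the matchings,'' claiming it would violate orientedness or ultrahomogeneity. This targets the wrong issue: a cyclic orientation pattern at the meta-level involves no antiparallel arcs between the same vertex pair and is converted to the transitive pattern by bichromatic symmetrizations, so acyclicity of directions is neither an obstruction nor the point. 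The actual obligation is that the perfect matchings \emph{compose consistently}, i.e.\ that $\alpha_{13}=\alpha_{23}\circ\alpha_{12}$ on the vertices, so that the union of the matchings decomposes into $n$ vertex-disjoint discretely colored tournaments rather than longer ``twisted'' cycles. Your sketch never addresses this, and it is not a formality: for $n=2$, $t=3$ the twisted configuration is a vertex-colored $6$-cycle on which the automorphism group already acts transitively on all small configurations one first checks, so excluding it (or placing it correctly in the classification) requires a real argument. The paper sidesteps this entirely: once every class induces $E_n$ and every pair is matching-connected, no information is lost by forgetting orientations, and the statement is then imported from the classification of undirected ultrahomogeneous vertex-colored graphs~\cite{HeinrichSchneiderSchweitzer2020}.

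For the $\cthree$ classes, the concrete fact the paper proves is a transitivity statement: if $R_1,R_2$ and $R_2,R_3$ are both non-homogeneously connected (hence each pair induces $\cthree\rightleftarrows\cthree$), then $R_1$ and $R_3$ cannot be homogeneously connected --- otherwise one finds pairs $r_1,r_3$ and $r_1',r_3'$ in $R_1\times R_3$ at distances $2$ and $3$ respectively, contradicting ultrahomogeneity. Your appeal to ``uniqueness of $\cthree\rightleftarrows\cthree$ up to equivalence'' does not identify this as the statement to prove and would not by itself exclude a component in which some pairs of $\cthree$-classes are joined and others are not. Finally, your fallback plan of applying Theorem~\ref{thm: general-extension} with $R$ taken to be the union of all-but-one color class is not licensed by that theorem as stated, since it requires $R$ and $B$ to be single vertex color classes; the paper never uses it in that multi-class form.
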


\begin{proof}
Let~$G$ be an ultrahomogeneous oriented graph.
We observe that by Lemma~\ref{lem:blow:ups:are:consistent} and Lemma~\ref{lem:blow:ups} it suffices to consider graphs
whose color classes induce ultrahomogeneous oriented graphs that are not the blow-up of another graph.

Thus the options, by Theorem~\ref{thm:all:blow:ups:of:orient:and:unique} are~$E_n$,~$C_3$, and~$H_0$.
By Theorem~\ref{bichromatic:theorem} classes inducing two different of these three graphs are homogeneously connected.

The graph~$H_0$ cannot be non-homogeneously connected to any other color class.

We argue that if~$R_1$,~$R_2$ and~$R_3$ are color classes such that~$G[R_i]\cong \cthree$, then any two of them induce $\cthree\rightleftarrows \cthree$. Assume that~$R_1$ and~$R_2$ as well as~$R_2$ and~$R_3$ and are non-homogeneously connected. It follows that, up to equivalence,~$G[R_1\cup R_2] \cong \cthree \rightleftarrows \cthree$ and~$G[R_2\cup R_3] \cong  \cthree \rightleftarrows \cthree$. It suffices to argue that~$R_1$ and~$R_3$ are not homogeneously connected. Suppose there are no edges between $R_1$ and $R_3$ in $G$. Observe that there are vertices~$r_1,r'_1\in R_1$ and~$r_3,r'_3\in R_3$ such that the distance of~$r_1$ and~$r_3$ is 2 while the distance of~$r'_1$ and~$r'_3$ is 3, which shows the statement.

We are left with the case in which all color classes induce independent sets and each pair of non-trivially color classes~$R$ and~$B$ is matching-connected. Such a graph is equivalent to an undirected ultrahomogeneous graph by forgetting the orientations (since there cannot be both a directed edge from~$R$ to~$B$ and a directed edge from~$B$ to~$R$ at the same time). The statement thus follows form the classification for undirected graphs~\cite{HeinrichSchneiderSchweitzer2020}.
\end{proof}

\section{Further research}
One of the most pressing questions is whether our approach can be used to classify the ultrahomogeneous countably infinite vertex-colored oriented graphs.
While certain concepts of our proof strategy are easily transferable to the countable infinite context (e.g., color classes induce monochromatic ultrahomogeneous structures, which are classified by Cherlin~\cite{MR1434988}) other proof concepts (e.g., most of our counting techniques) do not seem to transfer in a direct manner.

\paragraph*{Funding acknowledgement.} The research leading to these results has received funding from the European Research Council (ERC) under the European Union’s Horizon 2020 research and innovation programme (EngageS: grant agreement No. 820148).

	\bibliography{literature.bib}

\begin{thebibliography}{GLS22}

\bibitem[Ahl18]{Ahlman2018LimitLaws}
O.~Ahlman.
\newblock {\em Limit Laws, Homogenizable Structures and Their Connections}.
\newblock Dissertation, Uppsala University, Uppsala, Feb 2018.

\bibitem[Bab16]{Babai16}
L{\'{a}}szl{\'{o}} Babai.
\newblock Graph isomorphism in quasipolynomial time [extended abstract].
\newblock In Daniel Wichs and Yishay Mansour, editors, {\em Proceedings of the
  48th Annual {ACM} {SIGACT} Symposium on Theory of Computing, {STOC} 2016,
  Cambridge, MA, USA, June 18-21, 2016}, pages 684--697. {ACM}, 2016.

\bibitem[BF13]{DBLP:journals/combinatorics/BottcherF13}
Julia B{\"{o}}ttcher and Jan Foniok.
\newblock Ramsey properties of permutations.
\newblock {\em Electron. J. Comb.}, 20(1):2, 2013.

\bibitem[CF91]{https://doi.org/10.1112/jlms/s2-44.1.102}
Gregory~L. Cherlin and Ulrich Felgner.
\newblock Homogeneous solvable groups.
\newblock {\em Journal of the London Mathematical Society}, s2-44(1):102--120,
  1991.

\bibitem[CF00]{https://doi.org/10.1112/S0024610700001484}
Gregory Cherlin and Ulrich Felgner.
\newblock Homogeneous finite groups.
\newblock {\em Journal of the London Mathematical Society}, 62(3):784--794,
  2000.

\bibitem[Che98]{MR1434988}
Gregory~L. Cherlin.
\newblock The classification of countable homogeneous directed graphs and
  countable homogeneous {$n$}-tournaments.
\newblock {\em Mem. Amer. Math. Soc.}, 131(621):xiv+161, 1998.

\bibitem[Che11]{MR3418640}
Gregory Cherlin.
\newblock Two problems on homogeneous structures, revisited.
\newblock In {\em Model theoretic methods in finite combinatorics}, volume 558
  of {\em Contemp. Math.}, pages 319--415. Amer. Math. Soc., Providence, RI,
  2011.

\bibitem[Fra53]{Fraisse1953}
Roland Fraïssé.
\newblock Sur certaines relations qui généralisent l'ordre des nombres
  rationnels.
\newblock {\em Comptes Rendus de l'Académie des Sciences. Paris},
  237:540--542, 1953.

\bibitem[Gar76]{MR419293}
A.~Gardiner.
\newblock Homogeneous graphs.
\newblock {\em J. Combinatorial Theory Ser. B}, 20(1):94--102, 1976.

\bibitem[GK78]{GolfandKlin1978}
Ya~Gol'fand and Mikhail Klin.
\newblock On k-homogeneous graphs.
\newblock {\em Algorithmic studies in combinatorics (Russian)}, 186:76--85,
  1978.

\bibitem[GLS22]{GillLiebeckSpiga2022}
Nick Gill, Martin~W. Liebeck, and Pablo Spiga.
\newblock {\em Cherlin’s Conjecture for Finite Primitive Binary Permutation
  Groups}, volume 2302 of {\em Lecture Notes in Mathematics}.
\newblock Springer, 2022.

\bibitem[HSS20]{HeinrichSchneiderSchweitzer2020}
Irene Heinrich, Thomas Schneider, and Pascal Schweitzer.
\newblock Classification of finite highly regular vertex-coloured graphs.
\newblock {\em arXiv preprint arXiv:2012.01058}, Dec 2020.
\newblock arXiv:2012.01058 [math.CO].

\bibitem[JTS12]{truss:countable-homogeneous-multipartite-graphs-2}
Tristan Jenkinson, J.~K. Truss, and Daniel Seidel.
\newblock Countable homogeneous multipartite graphs.
\newblock {\em European J. Combin.}, 33(1):82--109, 2012.

\bibitem[Lac82]{lachlan82-finite-homogeneous-simple-digraphs}
Alistair~H. Lachlan.
\newblock Finite homogeneous simple digraphs.
\newblock In J.~Stern, editor, {\em Proceedings of the Herbrand Symposium Logic
  Colloquium '81}, pages 189--208. North-Holland Publishing Company, 1982.

\bibitem[Lac97]{Lachlan1997}
Alistair~H. Lachlan.
\newblock {\em Stable Finitely Homogeneous Structures: A Survey}, pages
  145--159.
\newblock Springer Netherlands, Dordrecht, 1997.

\bibitem[Li99]{Li_1999}
Cai~Heng Li.
\newblock A complete classification of finite homogeneous groups.
\newblock {\em Bulletin of the Australian Mathematical Society},
  60(2):331--334, 1999.

\bibitem[LT95]{MR1373115}
Alistair~H. Lachlan and Allyson Tripp.
\newblock Finite homogeneous {$3$}-graphs.
\newblock {\em Math. Logic Quart.}, 41(3):287--306, 1995.

\bibitem[LT14]{truss:homogeneous-coloured-multipartite-graphs}
Deborah~C. Lockett and John~K. Truss.
\newblock Homogeneous coloured multipartite graphs.
\newblock {\em European J. Combin.}, 42:217--242, 2014.

\bibitem[LW80]{MR583847}
A.~H. Lachlan and Robert~E. Woodrow.
\newblock Countable ultrahomogeneous undirected graphs.
\newblock {\em Trans. Amer. Math. Soc.}, 262(1):51--94, 1980.

\bibitem[Mac11]{DBLP:journals/dm/Macpherson11}
Dugald Macpherson.
\newblock A survey of homogeneous structures.
\newblock {\em Discret. Math.}, 311(15):1599--1634, 2011.

\bibitem[Mek93]{Mekler1993}
Alan~H. Mekler.
\newblock {\em Homogeneous Partially Ordered Sets}, pages 279--288.
\newblock Springer Netherlands, Dordrecht, 1993.

\bibitem[She75]{MR363974}
J.~Sheehan.
\newblock Smoothly embeddable subgraphs.
\newblock {\em J. London Math. Soc. (2)}, 9:212--218, 1974/75.

\end{thebibliography}
	\bibliographystyle{alpha}
	
\end{document}